\newcommand{\stkout}[1]{\ifmmode\text{\sout{\ensuremath{#1}}}\else\sout{#1}\fi}
\newtheorem{theorem}{Theorem}[section]
\newtheorem{remark}[theorem]{Remark}
\newtheorem{assumption}[theorem]{Assumption}
\newtheorem{lemma}[theorem]{Lemma}
\newtheorem{proposition}[theorem]{Proposition}
\newtheorem{corollary}[theorem]{Corollary}
\newtheorem{definition}[theorem]{Definition}
\def\A{\mathcal{A}}
\def \E{\mathsf{E}}
\def \P{\mathsf{P}}
\def \R{\mathbb{R}}
\def \F{\mathbb{F}}
\def\d{\mathrm{d}}
\newcommand{\Span}{\text{Span}}
\definecolor{red}{rgb}{1.0,0.0,0.0}
\definecolor{blu}{rgb}{0.0,0.0,1.0}
\definecolor{gre}{rgb}{0.03,0.50,0.03}
\title[Variational inequalities and smooth-fit principle in Hilbert spaces]{Variational inequalities and smooth-fit principle for singular stochastic control problems in Hilbert spaces} 
\author[Federico]{Salvatore Federico}
\author[Ferrari]{Giorgio Ferrari}
\author[Riedel]{Frank Riedel}
\author[R\"ockner]{Michael R\"ockner}
\address{S.~Federico: Dipartimento di Matematica, Universit\`a di Bologna,  Piazza di Porta S.\ Donato 5, 40126, Bologna, Italy}
\email{\href{mailto:s.federico@unibo.it}{s..federico@unibo.it}}
\address{G.~Ferrari: Center for Mathematical Economics (IMW), Bielefeld University, Universit\"atsstrasse 25, 33615, Bielefeld, Germany}
\email{\href{mailto:giorgio.ferrari@uni-bielefeld.de}{giorgio.ferrari@uni-bielefeld.de}}
\address{F.~Riedel: Center for Mathematical Economics (IMW), Bielefeld University, Universit\"atsstrasse 25, 33615, Bielefeld, Germany}
\email{\href{mailto:frank.riedel@uni-bielefeld.de}{frank.riedel@uni-bielefeld.de}}
\address{M.~R\"ockner: Faculty of Mathematics, Bielefeld University, Universit\"atsstrasse 25, 33615, Bielefeld, Germany}
\email{\href{mailto:roeckner@math.uni-bielefeld.de}{roeckner@math.uni-bielefeld.de}}
\date{\today}
\numberwithin{equation}{section}
\begin{document}

\begin{abstract} 
We consider a class of infinite-dimensional singular stochastic control problems. These can be thought of as spatial monotone follower problems and find applications in spatial models of production and climate transition. Let $(D,\mathcal{M},\mu)$ be a finite measure space and consider the Hilbert space
$H:=L^2(D,\mathcal{M},\mu; \mathbb{R})$. Let then $X$ be an $H$-valued stochastic process on a suitable complete probability space, whose evolution is determined through an SPDE driven by a self-adjoint linear operator $\mathcal{A}$ and affected by a cylindrical Brownian motion. The evolution of $X$ is controlled linearly via an $H$-valued control consisting of the direction and the intensity of action, a real-valued nondecreasing right-continuous stochastic process, adapted to the underlying filtration. The goal is to minimize a discounted convex cost-functional over an infinite time-horizon. By combining properties of semiconcave functions and techniques from viscosity theory, we first show that the value function of the problem $V$ is a {$C^{1,\mathrm{Lip}}(H)$}-viscosity solution to the corresponding dynamic programming equation, which here takes the form of a variational inequality with gradient constraint. Then, by allowing the decision maker to choose only the intensity of the control and requiring that the given control direction $\hat{n}$ is an eigenvector of the linear operator $\mathcal{A}$, we establish that the directional derivative $V_{\hat{n}}$ is of class $C^1(H)$, hence a second-order smooth-fit principle in the controlled direction holds for $V$. This result is obtained by exploiting a connection to optimal stopping and combining results and techniques from convex analysis and viscosity theory. 
 
\end{abstract}

\maketitle

\smallskip

{\textbf{Keywords}}: infinite-dimensional singular stochastic control; viscosity solution; variational inequality; infinite-dimensional optimal stopping; smooth-fit principle.

\smallskip

{\textbf{MSC2020 subject classification}}: 93E20, 37L55, 35D40, 49J40, 60G40, 91B72.


\section{Introduction}
\label{introduction}

Singular control and optimal stopping problems arise frequently in Economics, Finance, Engineering, and related fields. Due to their inherent complexity, much analysis tends to focus on one-dimensional problems, where our understanding is relatively comprehensive. However, contemporary societal challenges present complex structures for singular control problems, that ask for a rigorous and sound mathematical basis. This paper introduces a framework for addressing singular control problems in the context of state processes governed by stochastic partial differential equations. By combining convex-analytic arguments and the theory of viscosity solutions, we show that the problem's value function $V$ is a $C^{1,Lip}(H)$-viscosity solution to the corresponding dynamic programming equation. Furthermore, by exploiting a connection to a suitable family of simpler optimal stopping problems, we are able to further enhance regularity and prove that a second-order smooth-fit principle holds for $V$. Finally, we discuss potential applications in fields such as energy economics or climate modeling. 

Let us describe the  class of infinite-dimensional singular stochastic control problems and our contributions more precisely. Let $(D,\mathcal{M},\mu)$ be a finite measure space, and consider the Hilbert space $H:=L^2(D,\mathcal{M},\mu; \mathbb{R})$. The state variable is described by a stochastic process $X$ with values in  $H$. Its evolution is determined by a stochastic partial differential equation (SPDE) driven by a self-adjoint linear operator $\mathcal{A}$ and   a cylindrical Brownian motion. Next to   technical requirements, we assume that the operator $\mathcal{A}$ generates a $C_0$-semigroup of positivity-preserving contractions. As a benchmark case, one can consider the sum of the Laplacian operator and a multiplicative operator of the form $-\delta x$, for $\delta>0$, representing a depreciation or dissipative term. The evolution of $X$ is controlled linearly through an $H$-valued control, encompassing both direction and intensity of action, the latter being a real-valued nondecreasing right-continuous stochastic process adapted to the underlying filtration. The objective is to minimize a discounted convex cost-functional of the form
\begin{equation}
    \label{eq:costJi-ntro}
\mathcal{J}(x;I):=\E \bigg[\int_{0^-}^{\infty}e^{-\rho t} \Big({G}(X_t^{x,I})\d t +\langle q, \d I_{t}\rangle_H \Big)\bigg], \quad (x,I) \in H \times \mathcal{I},
\end{equation}
where $X^{x,I}$ is the state process starting at $x$ and controlled via $I \in \mathcal{I}$ (cf.\ \eqref{set:S} below), $G$ is a running cost function (see Assumption \ref{ass:C} below), $q \in H$ is a (stritly positive) proportional cost of action, and  $\rho>0$ is an intertemporal discount rate. 
The problem under study can thus be  thought of as the infinite-dimensional version of the {monotone follower} problems addressed in \cite{K81, K83, {KaratzasShreve84}}, among others.

Our analysis begins by establishing preliminary regularity properties of the problem's value function, denoted as $V$. Specifically, assuming that the running cost function $G$ is convex and semiconcave (with respect to the norm of $H$), we demonstrate that these properties are inherited by $V$. Thus, by adapting results from \cite{CannarsaSinestrari} to our infinite-dimensional setting, we find that $V \in C^{1,\text{Lip}}(H)$.


We begin by deriving the dynamic programming equation associated with \eqref{eq:costJi-ntro} and show that $V$ is a $C^{1,\text{Lip}}(H)$-viscosity solution of this equation. 
The proof relies on Dynkin’s formula for test functions of semimartingales and on an alternative representation of $V$, obtained through a tailored application of the Radon–Nikodym theorem for vector-valued measures (see Lemma \ref{lemma:newcontrol} and \eqref{eq:costfunctbis} below). 
In particular, the proof of the supersolution property of $V$ introduces a new argument based on an inequality derived from Dynkin’s formula together with the dissipativity property of the operator $\mathcal{A}$. 
This approach also extends naturally to finite-dimensional settings, greatly simplifying the usual technical difficulties in proving the supersolution (or subsolution) property in minimization (or maximization) problems involving singular controls (see, e.g., \cite{Ch}, \cite[Ch.~VIII]{FlemingSoner}, \cite{HS}, and \cite{Ma}).

To further enhance the regularity of $V$, we introduce the assumption that the decision-maker can only control the intensity of action. The direction of action, denoted by $\hat{n} \in H$, is then taken to be an eigenvector of the operator $\mathcal{A}$. Under this requirement and further technical properties of $G$, we are able to show that the directional derivative of $V$ in the direction of control, $V_{\hat{n}}$, is such that $V_{\hat{n}} \in C^1(H)$. This result can be read as a second-order smooth fit property of $V$, a regularity result of particular relevance in singular stochastic control problems (see the discussion in Section \ref{cor:2ndsmooth} below). The aforementioned smooth-fit property is obtained by identifying $V_{\hat{n}}$ as the value function of an optimal stopping problem (in the spirit of the finite-finite dimensional contribution \cite{BK}) and subsequently examining the regularity of its (sub)gradient. In particular, under a suitable nondegeneracy condition on the Brownian noise, assuming that the directional derivative $G_{\hat{n}}$ is semiconcave, and combining arguments from viscosity theory and convex analysis, we are able to show that $V_{\hat{n}}$ is Fr\'echet differentiable at any $x\in H$ and that the gradient $DV_{\hat{n}} \in C(H;H)$. For further details, please refer to Proposition \ref{prop:improved}. 

In Section \ref{sec:applications}, we demonstrate the relevance of our framework in economic applications. For instance, we examine an irreversible investment problem in energy capacity and an energy balance climate model incorporating human impact. In the former, an energy producer seeks to maximize the net total expected surplus resulting from irreversible investments in energy production. In the latter, temperature is increased by human activities through carbon emissions and a social planner aims to minimize an intertemporal expected cost criterion, penalizing temperature deviations from an ideal level, such as pre-industrial temperatures.

Let us now discuss related literature and our contribution to it. The origin of singular stochastic control dates back to the early contributions by Bather and Chernoff \cite{BatherChernoff67} , and later by Bene\v{s}, Shepp, and Witsenhausen \cite{Benesetal} and Karatzas \cite{K81, K83}. Those seminal papers deal with one-dimensional problems of so-called \emph{monotone follower} type, in which a process with monotone paths (or, more generally, of bounded-variation) has to be chosen in order to track the evolution of a Brownian motion so that an expected cost criterion is minimized. Since then, the theory of singular stochastic control has attracted increasing attention, also boosted by its connection to optimal stopping (see \cite{BK}, \cite{BoetiusKohlmann}, and \cite{KaratzasShreve84}, just to cite a few) and its numerous applications in Economics and Finance. Among those, problems of optimal capacity expansion \cite{BK}, optimal investment with transaction costs \cite{SoSh}, optimal harvesting \cite{Al00}, and optimal dividends' distribution \cite{JS}.

For stationary one-dimensional problems, or for two-dimensional degenerate problems with a suitable structure \cite{Ferrari15,MZ07}, explicit solutions can be expected. Typically, these solutions are obtained through the guess-and-verify approach. This involves first determining a smooth solution to the problem's dynamic programming equation (in this case, a variational inequality with gradient constraints), and then verifying its optimality using a version of It\^o's formula. Additionally, an optimal control is determined as a byproduct of this analysis. This is given in terms of the solution to a Skorokhod reflection problem at the so-called free boundary, i.e.\ the topological boundary of the region in which the gradient constraint is not active (the so-called no-action or continuation region). 

For time-dependent problems or for stationary problems in dimension larger than one, the guess-and-verify approach is not feasible. This is because the dynamic programming equation now becomes a partial differential equation (PDE) with gradient constraints, for which explicit solutions are typically not available. As a consequence, direct probabilistic and analytical approaches are put in place in order to obtain regularity of the value function (typically under convexity requirements; see, e.g., \cite{Hynd1, Hynd2, Menaldi, SoSh2}) and, when possible, to characterize the optimal control as the minimal amount of effort needed to keep the underlying state process within the no-action region (see  \cite{DianettiFerrari}, \cite{Kruk}, and references therein). As a matter of fact, differently to before, in multi-dimensional settings, the free boundary is not explicit and constructing a solution to the related Skorokhod reflection problem is far from trivial. We refer to the introduction of \cite{DianettiFerrari} for a discussion on this aspect. The aforementioned challenges explain why the number of contributions on singular stochastic control problems in multi-dimensional settings is still very limited. 

The theory of regular stochastic control and of optimal stopping in infinite-dimensional (notably, Hilbert) spaces received a large attention in the last decades (see, e.g., the monography \cite{FGS} and the recent regularity results in \cite{defeo} for control problems, and \cite{BM}, \cite{CDA}, \cite{ChowMenaldi}, \cite{FO}, \cite{Fuhrman}, \cite{GS}, \cite{SwiechTexteira}, \cite{Zabczyk} for optimal stopping). As previously discussed, we contribute to that bunch of literature by providing the viscosity property and $C^1$-regularity (smooth-fit) of the value function of a class of  optimal stopping problems in Hilbert spaces. To the best of our knowledge, such a regularity result appears here for the first time, and it is therefore of independent interest.

On the other hand, the literature on singular stochastic control in infinite-dimensional spaces is very limited. The only three papers brought to our attention are \cite{Oksendal2} and \cite{Oksendal1}, motivated by optimal harvesting, and ours \cite{FFRR}. In \cite{Oksendal1} the problem is posed for a quite general controlled SPDE, which also enjoys a space-mean dependence in \cite{Oksendal2}. The authors establish a necessary Maximum Principle, which is also sufficient assuming the concavity of the Hamiltonian function pertaining to the control problem under consideration. However, despite their significant contributions, there appears to be a foundational concern when dealing with (singularly controlled) SPDEs, particularly regarding the existence of a solution and the application of It\^o's formula (refer to \cite{LR} for theory and results on SPDEs).
Specifically, it is important to notice that in infinite-dimensional singular (stochastic) control problems, the precise interpretation of the integral with respect to the vector measure represented by the control process - and thus the exact interpretation of the controlled state equation - poses a nuanced issue that warrants careful consideration. Finally, our previous work \cite{FFRR} derives necessary and sufficient conditions for a class of singular stochastic control problems on an abstract partially ordered infinite-dimensional space. The main differences with respect to the present work are in the framework, the methodology, and the nature of the results. In \cite{FFRR}, the controlled state process is fully degenerate and randomness comes into the problem only in a parametric form, thus making the underlying optimization problem not necessarily Markovian. Furthermore, the main result is obtained by the exclusive mean of convex analytic arguments, and no statement about the regularity of the value function is made. In this work, we deal with a singularly controlled SPDE and exploit the dynamic programming approach 
 together with viscosity theory and convex analysis in order to achieve regularity results on the problem's value function.
\vspace{0.25cm}

\textbf{Organization of the Paper.}\ The rest of the paper is organized as follows. In Section \ref{sec:setting} we provide the setting and introduce the problem. In Section \ref{sec:preliminaryvisc-V} we then consider the variational inequality associated to the problem and prove preliminary regularity and viscosity property of its value function $V$. Under a suitable requirement on the direction of action, in Section \ref{sec:connection} a connection to optimal stopping is derived and regularity of the optimal stopping problem's value function is proved. As a byproduct of that, in Section \ref{sec:mainres} a second-order smooth-fit property for $V$ is then obtained. Finally, Section \ref{sec:applications} proposes two applications in Economics, while Appendix \ref{sec:appendixA} provides an extended sketch of the proof of the Dynamic Programming Principle and Appendix \ref{sec:appendixB} some technical lemmata.


\section{Setting and Problem Formulation}
\label{sec:setting}

\subsection{Setting} 
Let $(D,\mathcal{M},\mu)$ be a finite standard Borel measure space and assume, without loss of generality for what follows, that  $\mu(D)=1$. Consider the separable Hilbert space 
$$H:=L^2(D,\mathcal{M},\mu; \mathbb{R}).$$ The dual $H^*$ is identified with $H$ via the classical Riesz representation of $H^*$. The nonnegative cone of $H$ is denoted by 
$$H_+:=\big\{x\in H: \ x \geq 0\big\}.$$
We denote by $\mathcal{L}(H)$ the space of linear bounded operators on $H$ and by {$\mathcal{L}^+(H)$} the subspace of positivity-preserving operators of $\mathcal{L}(H)$; i.e., ${P}\in \mathcal{L}^+(H)$ if
$$
x\in H_+  \ \Longrightarrow \  Px\in H_+.$$

Throughout the paper, we consider a linear operator $\mathcal{A}:\mathcal{D}(\mathcal{A})\subseteq H\to H$ satisfying the following standing requirements.
\begin{assumption}
\label{ass:A}
$\mathcal{A}$ is self-adjoint, closed, densely defined, and such that, for some $\delta>0$, we have  
$$\langle \A x, x \rangle_H \leq - \delta |x|^2_H, \ \ \forall x\in H.$$
\end{assumption}
In particular (see, e.g., \cite[Ch.\,II,\,Sec.\,3]{EN} and \cite[Ch.\,II-1, Sec.\,2.10.1]{BDDM}), under Assumption \ref{ass:A}, the operator $\mathcal{A}$ generates a $C_0$-semigroup of contractions $(e^{t\A})_{t\geq 0}\subseteq \mathcal{L}(H)$ and 
$$
|e^{t\A}|_{\mathcal{L}(H)}\leq   e^{-\delta t},\ \ \ \forall t\geq 0.
$$
Moreover, $0\in\varrho(\A)$ -- with $\varrho$ denoting the resolvent set -- so that $\A$ is invertible and $$\A^{-1}\in \mathcal{L}(H).$$
We also assume the following.
\begin{assumption}
\label{ass:A2}
The $C_0$-semigroup of contractions $(e^{t\A})_{t\geq 0}\subseteq \mathcal{L}(H)$ is positivity-preserving; that is, $(e^{t\A})_{t\geq 0}\subseteq \mathcal{L}^+(H)$.
\end{assumption}

\begin{remark}
\label{rem:positivitypres}
Sufficient conditions guaranteeing positivity of semigroups can be found, e.g., in \cite[Chap.\ C-II, Thm.\ 1.2, Thm.\ 1.8]{Arendt-etal} and \cite[Thm.\ 7.29 and Prop.\ 7.46]{Clement-etal} 
\end{remark}



Let us now come to the probabilistic structure of our setup.
We endow the time-interval $[0,\infty)$ with the Borel $\sigma$-algebra $\mathcal{B}([0,\infty))$.
Also, let $(\Omega,\mathcal{F},\F,\P)$ be a filtered probability space, with filtration $\F:=(\mathcal{F}_t)_{t\in[0,\infty)}$ satisfying the usual conditions, and let $W$ be a cylindrical Wiener process on $(\Omega,\mathcal{F},\F,\P)$, taking values in another Hilbert space $K$. Finally, for future use, we denote by $\mathcal{T}$ the set of all $\mathbb{F}$-stopping times.

In the following, all the relationships involving $\omega\in\Omega$ as hidden random parameter are intended to hold $\P$-almost surely. Also, in order to simplify the exposition, often we will not stress the explicit dependence of the involved random variables and processes with respect to $\omega\in \Omega$.  

Let $\Delta \subseteq H_+$ be a convex cone of $H_+$ and set
\begin{eqnarray}
\label{set:S0}
\mathcal{M}&:=& \big\{I: \Omega \times [0,\infty) \to H_+:\,\text{$I_{\cdot}$ is}\,\,\F-\text{adapted and such that}\,\, t \mapsto I_t \nonumber \\
&& \hspace{1.5cm} \text{is c\`adl\`ag and with}\,\, I_{t} - I_{s-} \in \Delta \,\, \ \forall s,t \in [0,\infty)\,\,\text{such that}\,\,t\geq s \big\}.
\end{eqnarray}
Notice that, since any $I \in \mathcal{M}$ takes values in $H_+$, right-continuity is intended in the norm of $H$. In the following, we set $I_{0^-}:=\textbf{0}\in H_+$ for any $I \in \mathcal{M}$ (see Remark \ref{rem:nu0} below).

Any given $I \in \mathcal{M}$ can be seen as a (random) countably additive vector measure $$I: \mathcal{B}([0,\infty)) \to H_+$$ of local finite variation,
defined as  
$$I([s,t]):= I_t - I_{s^-} \  \ \ \forall s,t\in [0,\infty), \ t\geq s.$$ 
We denote by $|I|$ the variation of $I$; it  is a nonnegative (optional random) measure on $([0,\infty),\mathcal{B}([0,\infty)))$ that, due to monotonicity of $I$, can be simply expressed as 
\begin{equation}
\label{eq:dabsI}
    |I|([s,t]) = |I_t - I_{s^-}|_H, \ \ \ \forall s,t\in [0,\infty), \ s\leq t.
\end{equation}

\begin{remark}
\label{rem:nu0}
By setting $I_{0^-}:=0$ for any $I \in \mathcal{M}$, we mean that we extend any $I \in \mathcal{M}$ by setting $I \equiv 0$ on $[-\varepsilon,0)$, for a given and fixed $\varepsilon>0$. In this way, the associated measures have a positive mass at initial time of size $I_0$. Notice that this is equivalent with identifying any control $I$ with a countably additive measure $I: \mathcal{B}([0,\infty)) \to [0,\infty)$ of local finite variation defined as $I((s,t]):= I_t - I_{s}$, for every $s,t\in [0,\infty)$, $s < t$, plus a Dirac-delta at time $0$ of amplitude $I_0$.
\end{remark}

Since $H$ is a reflexive Banach space, by \cite{DU}, Corollary 13 at p.\ 76 (see also Definition 3 at p.\ 61), there exists a  Bochner measurable function  $\hat \vartheta=\hat \vartheta(\omega):[0,\infty) \to H_+$ such that 
\begin{equation}
\label{DP}
\int_{[0,T]} |\hat \vartheta_t|_{H} \d|I|_t < \infty \ \ \ \ \forall T>0\quad \text{and} \quad \d I_{t}= \hat \vartheta_{t}\, \d|I|_{t} \ \ \ \forall t\geq 0.
\end{equation}
Notice that, seen as a stochastic process, $\hat \vartheta=(\hat \vartheta_t)_{t\geq  0}$ is $\mathbb{F}$-adapted, because so is $I$. Furthermore, given that the measures $I$ and $|I|$ are equivalent by \eqref{eq:dabsI}, one has 
\begin{equation}\label{eq:hattheta}
\hat{\vartheta}_t\neq \textbf{0} \ \ \ \mbox{for a.e.} \  t\geq 0.
\end{equation}
The process $\hat\vartheta$ is clearly unique up to $\P\times |I|-$null measure sets.

Then, for a given $H_+-$valued $\mathbb{F}$-adapted process ${f}:=(f_t)_{t\in [0,\infty)}$, recalling  \eqref{DP}, for any $t\in [0,\infty)$ we define 
\begin{eqnarray}
\label{def-int1}
& \displaystyle \int_{0^-}^t \langle f_s,\,\d I_s\rangle_H :=\int_{[0,t]} \langle f_s, \hat{\vartheta}_s\rangle_H \, \d |I|_s = \int_{[0,t]} \Big(\int_D f_s(\xi) \hat{\vartheta}_s(\xi) \mu(\d \xi) \Big) \d |I|_s \nonumber \\
&= \displaystyle \int_D \Big(\int_{[0,t]} f_s(\xi) \hat{\vartheta}_s(\xi) \d |I|_s \Big) \mu(\d \xi),
\end{eqnarray}
where the last step is possible due to Fubini-Tonelli's theorem. 
With regard to \eqref{DP}, we also set
\begin{equation}
\label{eq:I-conv}
\int_{0^-}^{t}e^{(t-s)\A} \d I_{s} := \int_{0^-}^{t}e^{(t-s)\A}\hat{\vartheta}_{s} \d |I|_{s}, \quad t \geq 0. 
\end{equation}

Thanks to \eqref{eq:I-conv}, for any given $I \in \mathcal{M}$, we can then introduce the singularly continuous controlled dynamics
\begin{equation}
\label{eq:state}
\d X^{x,I}_{t}=\mathcal{A}X^{x,I}_{t}\d t+\sigma \d W_{t} + \d I_{t}, \quad t \geq 0,  \ \ \ X_{0^{-}}^{x,I}=x \in H,
\end{equation}
and define the unique mild solution to \eqref{eq:state} as
\begin{equation}
\label{mild}
X_{t}^{x,I}=e^{t\A}x+W_{t}^{\A,\sigma}+ \int_{0^-}^{t}e^{(t-s)\A}\d I_{s}, \quad t\geq 0.
\end{equation}
Here, 
\begin{equation}
\label{eq:stochintegr}
W_{t}^{\A,\sigma}:= \int_{0}^t{e}^{(t-s)\A}\sigma \d W_{s}, \quad t \geq0,
\end{equation}
with $\sigma$ satisfying the following standing condition. 
\begin{assumption}
\label{ass:sigma}
$\sigma\in\mathcal{L}_2(K;H)$, where $\mathcal{L}_2(K;H)$ denotes the space of Hilbert-Schmidt operators from $K$ to $H$.
\end{assumption}
We denote by $\mathcal{L}_1(H)$ the set of  nuclear operators on $H$, i.e.  operators $Q\in\mathcal{L}(H)$ such that 
\begin{equation}\label{normL1}
|Q|_{\mathcal{L}_{1}(H)}:=\mbox{Tr}[|Q|]:=\sum_{k=0}^{\infty}\langle \sqrt {Q^{*}Q}e_{k},e_{k}\rangle_{H}<\infty,
\end{equation}
where $(e_{k})$ is any orthonormal basis of $H$. 
As well known, the above quantity does not depend on the choice of the basis and $\mathcal{L}_1$ is a Banach space endowed by the norm defined in \eqref{normL1} .
We then have, under Assumption \ref{ass:sigma}, that
$
\sigma\sigma^{*}\in\mathcal{L}_{1}(H).
$

Notice that Assumptions \ref{ass:A} and \ref{ass:sigma} imply that the stochastic convolution \eqref{eq:stochintegr} is well defined and continuous (see \cite[Ch.\,5]{DPZ}). For future use, we also note that, because of Assumptions \ref{ass:A} and \ref{ass:sigma}, for all $m\in [1,\infty)$ one has for some $\overline{c}_{m}>0$ (see \cite{Hausenblas-Seidler})
\begin{equation}
\label{estimatesup-W}
\E\left[\sup_{t\geq 0}|W^{\mathcal{A},\sigma}_{t}|^m_{H}\right] \leq \overline{c}_{m},
\end{equation}
which, denoting the mild solution to \eqref{eq:state} when $I$ is the null control by $X_{t}^{x,0}$, implies
\begin{equation}
\label{estimatesup}
\E\left[\sup_{t\geq 0}|X^{x,0}_{t}|^m_{H}\right]\leq c_{m}(1+|x|^m_{H}), \ \ \ \ \forall x\in H,
\end{equation}
for some other constant $c_{m}>0$.
\medskip

\begin{remark}
\label{rem:reaction-diff}
The Ornstein–Uhlenbeck setting plays a fundamental role in guaranteeing -- in a direct but still not immediate way -- the well-posedness of the state equation in the infinite-dimensional context, thereby providing a natural starting point for the analysis of the control problem. Its linear structure makes it possible to rigorously define the mild solution of the underlying controlled stochastic evolution equation and to derive the necessary a priori estimates on the state process. Moreover, this framework allows for the explicit derivation of gradient estimates and facilitates the application of semi-concavity and regularization techniques within a Hilbert space setting.

Extending the analysis to the relevant case of nonlinear dynamics  introduces substantial issues even at the preliminary stage of studying the well-posedness of the controlled state equation, due to the presence of the singular control. We leave a detailed study of this interesting class of problems for future research. 
\end{remark}

\subsection{Problem formulation}
We now move on by introducing the infinite-dimensional singular stochastic control problem which is the object of our study. Let $G:H \to \mathbb{R}$ be a running cost function, satisfying the following requirements.
\begin{assumption}
\label{ass:C}
\begin{enumerate}[(i)]
\item[]
\item ${G}$ is convex and 
there exist $c_o, \kappa_1, \kappa_2 >0$ such that
$$
\kappa_1 |x|_{H}^{2} - \kappa_2 \leq {G}(x)\leq c_{o}(1+|x|_{H}^{2});
$$ 
\item $G$ is semiconcave with semiconcavity constant $c_1>0$; that is, there exists $c_1>0$ such that
$$
\lambda G(x)+(1-\lambda) G(y) - G(\lambda x+(1-\lambda) y)\leq \frac{c_1}{2}\lambda(1-\lambda)|x-y|_{H}^{2}, \ \ \forall x,y\in H, \ \lambda\in[0,1].
$$
\end{enumerate}
\end{assumption}

Notice that by \cite{LasryLions} (see end of Page 265 therein) one has that $G \in C^{1, \text{Lip}}(H)$.

\begin{remark}
\label{rem:AssG}
\begin{enumerate}[(i)]
\item 
Benchmark examples satisfying Assumption \ref{ass:C} are the quadratic cost function
$$G(x)=\frac{1}{2}|x-\overline{x}|^2_H,     \ \ \ \ x\in H,$$  for some target level $\overline{x}\in H$, as well as 
$$
{G}(x)=\frac{1}{2}\langle x,h\rangle_{H}^2, \quad \text{or} \quad {G}(x)=\frac{1}{2}\langle Q x, x\rangle_{H}, \quad x \in H,
$$
with $h \in H$, and with $Q$ being positive semidefinite and symmetric.
\item Assumption \ref{ass:C}(ii) amounts to requiring semiconcavity with a linear modulus. One could weaken this condition by assuming that, for some $\alpha \in (0,1]$, 
\begin{equation}
    \label{eq:alphasemiconcave}
\lambda G(x) + (1-\lambda) G(y) - G(\lambda x + (1-\lambda) y) 
\leq \frac{c_1}{2} \lambda(1-\lambda) |x - y|_{H}^{1+\alpha}, 
\end{equation}
for all $x,y \in H$ and for all $\lambda \in [0,1]$. Under this weaker assumption, all the results in this paper remain valid, after making the corresponding adjustments. As a matter of fact, by mimicking the arguments developed in the finite-dimensional setting in \cite[Th.\,3.3.7]{CannarsaSinestrari}, one can still prove that a convex function satisfying \eqref{eq:alphasemiconcave} belongs to the class $C^{1,\alpha}_{loc}(H)$.
\end{enumerate}

\end{remark}

For a discount rate $\rho>0$ and for $q\in H_+$ such that $q \geq q_o \mathbf{1}$ for some $q_o>0$ (being $\mathbf{1} \in H$ the constant function of $H$ identically equal to $1$), recalling \eqref{def-int1} we introduce the expected cost functional 
\begin{equation}
\label{eq:costfunct}
\mathcal{J}(x;I):=\E \bigg[\int_{0^-}^{\infty}e^{-\rho t} \Big({G}(X_t^{x,I})\d t +\langle q, \d I_{t}\rangle_H \Big)\bigg], \quad (x,I) \in H \times \mathcal{I}.
\end{equation}
Here, the the class of admissible controls $\mathcal{I}$ is defined as (cf.\ \eqref{set:S0})

\begin{eqnarray}
\label{set:S}
\mathcal{I} & := & \Big\{I\in \mathcal{M}:\,\,\,\E\bigg[\Big|\int_{0^-}^T |\hat{\vartheta}_s|_H\, \d |I|_s\Big|^2\bigg] + \E\bigg[\int_{0^-}^T \frac{|\hat{\vartheta}_s|^2_H}{\langle q, \hat{\vartheta}_s \rangle}\, \d |I|_s\bigg] < \infty \,\,\, \forall T>0\Big\}.
\end{eqnarray}

The infinite-dimensional singular stochastic control problem under study is then
\begin{equation}
\label{eq:V}
V(x):=\inf_{I\in\mathcal{I}}\mathcal{J}(x;I), \quad x \in H.
\end{equation}

\begin{remark}
\label{rem:integrableI}
Notice that the integrability condition in \eqref{set:S} is not required for the well posedness of \eqref{eq:costfunct}, but it will be needed in the next section for the proof of the viscosity property of $V$.
\end{remark}

Given the structure of the cost functional \eqref{eq:costfunct}, it is convenient to rewrite the decomposition \eqref{DP} in a tailored way based on the instantaneous cost of control $\langle q, \d I_t\rangle_H$. To that end, recall that $\Delta \subseteq H_+$ is a convex cone of $H_+$ (cf.\ \eqref{set:S}, consider the convex set 
\begin{equation}
\label{setTHETA}
  \Theta:=\big\{\theta\in \Delta: \ \langle q, \theta \rangle_{H}=1\big\}, 
\end{equation}
and define
\begin{eqnarray}
\label{set:S-new}
\mathcal{S}&:=& \{\nu: \Omega \times [0,\infty) \to [0,\infty):\,\text{$\nu_{\cdot}$ is}\,\,\F-\text{adapted and such that}\,\, t \mapsto \nu_t \nonumber \\
&& \hspace{1.5cm} \text{is \ c\`adl\`ag and nondecreasing}\}.
\end{eqnarray}
In the sequel, we set $\nu_{0^-}:=0$ for any $\nu \in \mathcal{S}$ (see also Remark \ref{rem:nu0}).
Then, define 
\begin{eqnarray}
    \mathcal{I}_0& :=& \Big\{(\vartheta, \nu):\Omega \times [0,\infty) \to \Theta \times [0,\infty): \ \vartheta_\cdot \ \mbox{is} \ \F-\mbox{adapted},\,\, \nu_\cdot\in \mathcal{S}\,\,\, \text{and} \nonumber \\
    && \E\bigg[\Big|\int_{0^-}^T |{\vartheta}_s|_H\, \d \nu_s\Big|^2\bigg] +  \E\bigg[\int_{0^-}^T |{\vartheta}_s|^2_H\, \d \nu_s\bigg] < \infty \,\,\,\text{for}\,\, \forall T>0 \Big\}.
    \label{setI0}
\end{eqnarray}

\begin{lemma}
\label{lemma:newcontrol}
For each $I\in\mathcal{I}$,  there exists a 
couple $(\vartheta,\nu)\in \mathcal{I}_{0}$, with $\nu\sim |I|$, such that 
\begin{equation}
\label{DP1}  
\d I_{t}=  \vartheta_{t}\, \d\nu_{t}, \ \ \ \ \forall t\geq 0.
\end{equation}
This couple is unique in the following sense: the optional random measure $\nu$ is unique and $\vartheta$ is unique up to $\P \otimes \nu-$null measure sets. 
\end{lemma}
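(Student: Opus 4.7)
The plan is to renormalize the Radon--Nikodym derivative $\hat\vartheta$ of \eqref{DP} by dividing by $\langle q,\hat\vartheta_t\rangle_H$ so that it lands on the slice $\Theta$, and simultaneously rescale the variation measure $|I|$ by this same scalar factor. Concretely, I would take
\[
\nu_t\,:=\,\langle q, I_t\rangle_H,\qquad t\geq 0,
\]
with $\nu_{0^-}:=0$. Because $I_t-I_{s^-}\in\Delta\subseteq H_+$ and $q\in H_+$, the process $\nu$ is nondecreasing, c\`adl\`ag, and adapted. For the density, set
\[
\vartheta_t\,:=\,\frac{\hat\vartheta_t}{\langle q, \hat\vartheta_t\rangle_H}
\]
on the set $\{\langle q,\hat\vartheta_t\rangle_H>0\}$, and extend $\vartheta$ arbitrarily (but measurably, in $\Theta$) on the complement.

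The key verifications, in order, are the following. First, using the fact that each increment $I([s,t])=I_t-I_{s^-}$ lies in the (closed) convex cone $\Delta$ and that the Radon--Nikodym derivative furnished by Diestel--Uhl can be realized as a pointwise limit of quotients $I([s,t])/|I|([s,t])$ for $|I|$-a.e.\ $t$, I obtain $\hat\vartheta_t\in\Delta$ for $|I|$-a.e.\ $t$. Second, combining \eqref{eq:hattheta} with $\hat\vartheta_t\in H_+$ and $q\geq q_o\mathbf 1$ gives
\[
\langle q,\hat\vartheta_t\rangle_H\,\geq\, q_o\!\int_D \hat\vartheta_t(\xi)\,\mu(\d\xi)\,>\,0\qquad\text{for $|I|$-a.e.\ }t,
\]
so $\vartheta_t$ is well-defined outside a $|I|$-null set, and $\vartheta_t\in\Delta$ with $\langle q,\vartheta_t\rangle_H=1$, i.e.\ $\vartheta_t\in\Theta$. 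Third, testing \eqref{DP} against $q\in H$ yields $\d\nu_t=\langle q,\d I_t\rangle_H=\langle q,\hat\vartheta_t\rangle_H\,\d|I|_t$, which by the previous step shows $\nu\sim|I|$, and in turn
\[
\vartheta_t\,\d\nu_t\;=\;\frac{\hat\vartheta_t}{\langle q,\hat\vartheta_t\rangle_H}\,\langle q,\hat\vartheta_t\rangle_H\,\d|I|_t\;=\;\hat\vartheta_t\,\d|I|_t\;=\;\d I_t,
\]
giving \eqref{DP1}. The integrability requirement in \eqref{setI0} is then immediate from the identity $\int_{0^-}^T|\vartheta_s|_H\,\d\nu_s=\int_{0^-}^T|\hat\vartheta_s|_H\,\d|I|_s$ and the assumption $I\in\mathcal I$.

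For uniqueness, suppose $(\vartheta',\nu')\in\mathcal I_0$ also satisfies $\d I_t=\vartheta'_t\,\d\nu'_t$. Pairing with $q$ and using $\langle q,\vartheta'_t\rangle_H=1$ gives $\d\nu'_t=\langle q,\d I_t\rangle_H=\d\nu_t$, so the scalar measure is uniquely determined. Then $\vartheta_t\,\d\nu_t=\vartheta'_t\,\d\nu_t$ forces $\vartheta=\vartheta'$ up to a $\P\otimes\nu$-null set, by the uniqueness clause in the Radon--Nikodym theorem applied to the vector measure $I$ relative to $\nu$.

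The main technical point I expect to have to argue carefully is the first verification, namely that $\hat\vartheta_t\in\Delta$ for $|I|$-a.e.\ $t$: one must invoke both the pointwise limit representation of the Radon--Nikodym derivative for $H$-valued measures and the (implicit) closedness of the cone $\Delta$, since otherwise $\vartheta_t=\hat\vartheta_t/\langle q,\hat\vartheta_t\rangle_H$ need not belong to $\Theta$. Everything else is a direct computation once the correct renormalization $\vartheta_t=\hat\vartheta_t/\langle q,\hat\vartheta_t\rangle_H$, $\d\nu_t=\langle q,\hat\vartheta_t\rangle_H\,\d|I|_t$ is adopted.
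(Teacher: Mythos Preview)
Your proposal is correct and follows essentially the same route as the paper: define $\vartheta_t=\hat\vartheta_t/\langle q,\hat\vartheta_t\rangle_H$ and $\d\nu_t=\langle q,\hat\vartheta_t\rangle_H\,\d|I|_t$, check well-posedness via $q\geq q_o\mathbf 1$ and $\hat\vartheta_t\neq 0$, and obtain uniqueness by pairing with $q$. You are in fact more careful than the paper on one point: you flag the need to verify $\hat\vartheta_t\in\Delta$ (hence $\vartheta_t\in\Theta$), which requires a differentiation-type argument together with closedness of $\Delta$, whereas the paper's proof simply asserts positivity of $\hat\vartheta$ without addressing membership in the cone $\Delta$.
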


\begin{proof}
Due to \eqref{DP}, positivity of $\hat{\vartheta}$ and the fact that $q \geq q_o \mathbf{1}$, for some $q_o>0$, we can write for any $t\geq0$
$$
  \d I_{t}= \hat\vartheta_{t}\d|I|_{t} =  \frac{\hat\vartheta_{t}}{\langle q, \hat{\vartheta}_{t}\rangle_{H}}
  {\langle q, \hat{\vartheta}_{t} \rangle_{H}} \d|I|_{t} = \vartheta_{t}\, \d\nu_{t},
$$
where
$$
\vartheta_{t} := \frac{\hat{\vartheta}_t}{\langle q, \hat{\vartheta}_{t}\rangle_{H}},  \ \ \text{and} \ \  \d\nu_{t}:=  {\langle q, \hat{\vartheta}_{t}\rangle_{H}} \d|I|_{t}.
$$
Recalling \eqref{set:S}, one clearly has that the integrability conditions required in \eqref{setI0} are met, because of the previous definitions of $\vartheta$ and $\d\nu$, and because $I\in\mathcal{I}$.
This shows the first part of the claim.
\smallskip

Let us prove uniqueness. Assume that 
$$
  \d I_{t}=\vartheta^{(1)}_{t}\d\nu^{(1)}_{t} =  \vartheta^{(2)}_{t}\d \nu^{(2)}_{t}.
$$
Then, for all $0\leq a\leq b$, 
$$
\int_{[a,b]} \langle q, \vartheta^{(1)}_{t}\rangle_{H}\d\nu^{(1)}_{t} = \int_{[a,b]} \langle q, \vartheta^{(2)}_{t} \rangle_{H}\d\nu^{(2)}_{t}, 
$$
implying, by definition of $\Theta$,
$$
\int_{[a,b]} \d\nu^{(1)}_{t} = \int_{[a,b]} \d\nu^{(2)}_{t}.
$$
Hence, $\nu^{(1)}=\nu^{(2)}=:\nu$. 
Then, for all $0\leq a\leq b$, 
$$
\int_{[a,b]} \d I_{t}= \int_{[a,b]}  \vartheta^{(1)}_{t} \d\nu_{t} = \int_{[a,b]}  \vartheta^{(2)}_{t}\d\nu_{t}, 
$$
implying $\vartheta^{(1)}=\vartheta^{(2)}$ up to $\P \otimes \nu$-null measure sets.
\end{proof}
Thanks to Lemma \ref{lemma:newcontrol}, we may identify $\mathcal{I}$ with $\mathcal{I}_{0}$ (cf.\ \eqref{set:S} and \eqref{setI0}, respectively). Hence, hereafter, with a slight abuse of notation, we will often identify elements of the above sets.
The cost functional \eqref{eq:costfunctbis} then rewrites as 
\begin{equation}
\label{eq:costfunctbis}
\mathcal{J}(x;I)=\E \bigg[\int_{0^-}^{\infty}e^{-\rho t} \Big({G}(X_t^{x,I})\d t +\d \nu_{t} \Big)\bigg], \ \ \ \ \ \ x \in H, \  {I}=(\vartheta,\nu)\in\mathcal{I}_0,
\end{equation}
and the value function as
\begin{equation}
\label{eq:newV}
V(x)=\inf_{(\vartheta,\nu)\in\mathcal{I}_0}\mathcal{J}(x;I), \quad x \in H.
\end{equation}

In the next Section \ref{sec:preliminaryvisc-V}, we will make use of both the equivalent representations \eqref{eq:V} and \eqref{eq:newV}. In particular, preliminary regularity properties of $V$ (see Section \ref{sec:preliminary-V} below) as well as Propositions \ref{Dyn1} and \ref{Dyn2} in Section \ref{sec:viscosity-V} will be shown by using \eqref{eq:V}, while the viscosity property of $V$ will be proved through \eqref{eq:newV} (see Proposition \ref{DPPOC} and Theorem \ref{thm:Viscous} in Section \ref{sec:viscosity-V}).


\section{Regularity and Viscosity Property of $V$}
\label{sec:preliminaryvisc-V}

In this section we first show via direct convex-analytic arguments that $V$ is convex, has subquadratic growth and it is such that $V\in C^{1,\text{Lip}}(H)$. Then, we prove that $V$ is a viscosity solution to the associated Hamilton-Jacobi-Bellman equation, which in the present setting takes the form of a variational inequality with gradient constraint.

\subsection{Preliminary properties of $V$}
\label{sec:preliminary-V}

Here we provide some a priori regularity properties of the value function $V: H \to \mathbb{R}$ as in \eqref{eq:V}. 
\begin{proposition}
\label{prop:regularityV}
\begin{enumerate}[(i)]
\item[]
\item 
$V$ is convex;
\item There exists $\hat{c}_{o}>0$ such that, for $\kappa_2>0$ as in Assumption \ref{ass:C},
$$
-\kappa_2 \leq V(x)\leq \hat{c}_{o}(1+|x|^{2}_{H});$$
\item $V$ is locally Lipschitz;
 \item $V$ is semiconcave with semiconcavity constant $\hat{c}_1$; that is, there exists $\hat{c}_1>0$ such that
$$
\lambda V(x)+(1-\lambda) V(y) -V(\lambda x+(1-\lambda) y)\leq \frac{\hat{c}_1}{2}\lambda(1-\lambda)|x-y|_{H}^{2}, \ \ \forall x,y\in H, \ \lambda\in[0,1];
$$
\item $V\in C^{1,\text{Lip}}(H)$.
\end{enumerate}
\end{proposition}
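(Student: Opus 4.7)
The plan is to handle the five items in the natural order, building (v) out of (i) and (iv), with the remaining items serving as inputs. The single identity that I would establish at the outset and use repeatedly is the linear dependence of the mild solution on $(x,I)$: from \eqref{mild},
\[
X_t^{x,I} - X_t^{y,J} = e^{t\A}(x-y) + \int_{0^-}^t e^{(t-s)\A}\d (I-J)_s, \qquad t\geq 0,
\]
for all $x,y\in H$ and admissible $I,J \in \mathcal{I}$.

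For (i), fix $x,y \in H$, $I,J\in\mathcal{I}$, and $\lambda\in[0,1]$. Since $\Delta$ is a convex cone, $I^\lambda := \lambda I + (1-\lambda) J$ lies in $\mathcal{I}$, and the identity above gives $X_t^{x_\lambda, I^\lambda} = \lambda X_t^{x,I} + (1-\lambda) X_t^{y,J}$ for $x_\lambda := \lambda x + (1-\lambda) y$. Convexity of $G$ together with linearity of $\langle q,\cdot\rangle_H$ then yield $\mathcal{J}(x_\lambda; I^\lambda) \leq \lambda \mathcal{J}(x;I) + (1-\lambda)\mathcal{J}(y;J)$; passing to the infimum gives convexity of $V$. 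For (ii), the lower bound follows from Assumption \ref{ass:C}(i) via $G \geq -\kappa_2$ together with $\int_{0^-}^{\infty} e^{-\rho t}\langle q,\d I_t\rangle_H \geq 0$ (a consequence of $q \in H_+$ and $\hat\vartheta \geq 0$ in \eqref{DP}); the upper bound is obtained by testing with the null control $I \equiv 0$ and combining Assumption \ref{ass:C}(i) with the moment estimate \eqref{estimatesup}. For (iii), $V$ is finite everywhere and dominated by the continuous function $\hat c_o(1+|\cdot|_H^p)$, hence locally bounded above; a standard result in convex analysis on Banach spaces then implies local Lipschitzianity on the algebraic interior of its effective domain, which is all of $H$.

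The substantive step is (iv). Fix $x,y\in H$, $\lambda\in[0,1]$, and $\eps>0$. Choose an $\eps$-optimal control $I^\eps \in \mathcal{I}$ for $x_\lambda$, so that $V(x_\lambda) \geq \mathcal{J}(x_\lambda; I^\eps) - \eps$. Using the \emph{same} $I^\eps$ at $x$ and $y$, the affine identity gives
\[
X_t^{x_\lambda, I^\eps} = \lambda X_t^{x, I^\eps} + (1-\lambda) X_t^{y, I^\eps}, \qquad X_t^{x, I^\eps} - X_t^{y, I^\eps} = e^{t\A}(x-y),
\]
and semiconcavity of $G$ (Assumption \ref{ass:C}(ii)) combined with the contraction estimate $|e^{t\A}|_{\mathcal{L}(H)}\leq e^{-\delta t}$ from Assumption \ref{ass:A} delivers
\[
\lambda G(X_t^{x,I^\eps}) + (1-\lambda) G(X_t^{y,I^\eps}) - G(X_t^{x_\lambda, I^\eps}) \leq \frac{c_1}{2}\lambda(1-\lambda) e^{-2\delta t}|x-y|_H^2.
\]
Crucially, the proportional cost terms cancel from $\lambda \mathcal{J}(x;I^\eps) + (1-\lambda) \mathcal{J}(y;I^\eps) - \mathcal{J}(x_\lambda; I^\eps)$ by linearity in $I$. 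Multiplying by $e^{-\rho t}$, integrating, and using $V(x)\leq \mathcal{J}(x;I^\eps)$, $V(y)\leq \mathcal{J}(y;I^\eps)$, then letting $\eps \downarrow 0$, yields (iv) with $\hat c_1 = c_1/(\rho+2\delta)$.

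Item (v) is an immediate consequence of (i) and (iv) via Lemma \ref{lemm:semiconcavefucts}(ii), which asserts that a simultaneously convex and semiconcave function on $H$ belongs to $C^{1,\text{Lip}}(H)$. I do not anticipate any serious obstacle; the only point to verify carefully is that the $L^p$-integrability defining $\mathcal{I}$ depends only on the control, so an $\eps$-optimal control at $x_\lambda$ remains admissible at $x$ and $y$ and makes $\mathcal{J}(\cdot; I^\eps)$ finite there.
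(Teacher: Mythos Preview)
Your proposal is correct and follows essentially the same approach as the paper's own proof: convexity via convex combinations of controls and linearity of $(x,I)\mapsto X^{x,I}$; the growth bounds by testing the null control against \eqref{estimatesup}; local Lipschitz from convexity plus local boundedness; semiconcavity by fixing a single $\eps$-optimal control at $x_\lambda$, using it at both $x$ and $y$, and integrating the semiconcavity inequality of $G$ against $e^{-(\rho+2\delta)t}$ to obtain $\hat c_1=c_1/(\rho+2\delta)$; and finally invoking Lemma \ref{lemm:semiconcavefucts}(ii). Your closing remark that admissibility in $\mathcal{I}$ depends only on the control (not on the initial datum) is exactly the check needed to make (iv) go through.
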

\begin{proof}
We prove each item separately.
\vspace{0.15cm}

\emph{Proof of (i)}. {{For $i=1,2$, let $x_{i}\in H$ and let $I^{(i)}$ be $\varepsilon$-optimal for the initial data $x_{i}$, for some $\varepsilon>0$; that is,
$$
V(x_{i})+\varepsilon \geq \mathcal{J}(x_{i};I^{(i)}).
$$
For $\lambda\in[0,1]$, define
$$
x_{\lambda}:=\lambda x_{1}+(1-\lambda)x_{2}, \ \ \ \ \ I^{(\lambda)}:=\lambda I^{(1)}+(1-\lambda)I^{(2)}.
$$
 Given that the mapping $(x,I) \mapsto X^{x,I}$ is linear (see \eqref{eq:stochintegr} and \eqref{mild}), we have
 $$
 X^{x_{\lambda}, I^{(\lambda)}}=\lambda X^{x_1, I^{(1)}}+(1-\lambda)X^{x_2, I^{(2)}}.
 $$
 Then, using 
 the convexity of $G$, we write from \eqref{eq:V}
 \begin{align*}
&V(x_{\lambda})\leq \mathcal{J}(x_{\lambda};I^{(\lambda)})
=\E \bigg[\int_{0^-}^{\infty}e^{-\rho t} \Big({G}(X_t^{x_{\lambda},I^{(\lambda)}})\d t +\langle q, \d I^{(\lambda)}_{t}\rangle_H \Big)\bigg]\\
& \leq \lambda \E \bigg[\int_{0^-}^{\infty}e^{-\rho t} \Big({G}(X_t^{x_{1},I^{(1)}})\d t +\langle q, \d I^{(1)}_{t}\rangle_H \Big)\bigg]+(1-\lambda) \E \bigg[\int_{0^-}^{\infty}e^{-\rho t} \Big({G}(X_t^{x_{2},I^{(2)}})\d t +\langle q, \d I^{(2)}_{t}\rangle_H \Big)\bigg]\\
& = \lambda\mathcal{J}(x_{1};I^{(1)})+(1-\lambda) \mathcal{J}(x_{2};I^{(2)})\leq \lambda V(x_{1})+(1-\lambda) V(x_{2}) + \varepsilon.
 \end{align*} 
 By arbitrariness of $\varepsilon$, the claim follows.}}
\vspace{0.15cm}

\emph{Proof of (ii)}. The bound from below is immediate given that $G\geq -\kappa_2$ on $H$, $q \in H_+$, and $I\in \mathcal{I}$. As for the bound from above, recall that $X^{x,0}$ denote the uncontrolled mild solution to \eqref{eq:state}. Then, by Assumption \ref{ass:C}(ii), \eqref{eq:stochintegr} and the fact that $|e^{t\mathcal{A}}|_{\mathcal{L}(H)}\leq e^{-\delta t}|x|_H$, we can write 

\begin{align*}
V(x)&\leq \mathcal{J}(x;0)=\E\left[\int_{0}^\infty e^{-\rho t} {G} (X_{t}^{x,0})\d t\right] \leq c_{0} \int_{0}^\infty e^{-\rho t}\left(1+\E[|X_{t}^{x,0}|_{H}^{2}]\d t\right)\\
&\leq {c_{0}}\left(\frac{1}{\rho}+2\int_{0}^\infty e^{-\rho t} \left(|e^{t\A}x|_{H}^{2}+\E[|W_{t}^{\sigma,\A}|_{H}^{2}]\right)\d t\right)\\
&\leq {c_{0}}\left(\frac{1}{\rho}+2\int_{0}^\infty e^{-(\rho+2\delta) t} |x|_{H}^{2} \d t + 2\frac{1}{\rho} \overline{c}_{2}\right)\\
&= {c_{0}}\left(\frac{1}{\rho}\big(1 + 2\overline{c}_{2}\big) + \frac{2}{\rho + 2\delta} |x|_{H}^{2}\right),
\end{align*}
and the claim is proved.
\vspace{0.15cm}

\emph{Proof of (iii)}. It follows from the previous items and, e.g., \cite[Cor.\,2.4, p.\,12]{EkelandTemam}.
\vspace{0.15cm}

\emph{Proof of (iv)}. Let  $x,y\in H$ and $\lambda\in [0,1]$, and take an $\varepsilon$-optimal control $I^\varepsilon\in\mathcal{I}$ for $\lambda x+(1-\lambda)y$.
Since the state equation is affine (cf.\ \eqref{mild}), we have
$$
X^{\lambda x+(1-\lambda)y, I^\varepsilon}=\lambda X^{x, I^\varepsilon}+(1-\lambda) X^{y, I^\varepsilon},
$$
and the semiconcavity of $G$ allows to write (cf.\ \eqref{eq:costfunct})
\begin{align*}
&\lambda V(x)+(1-\lambda)V(y)-V(\lambda x+(1-\lambda) y)\\&
\leq \lambda \mathcal{J}(x;I^\varepsilon)+(1-\lambda)\mathcal{J}(y;I^\varepsilon)-J(\lambda x+(1-\lambda)y;I^\varepsilon)+\varepsilon\\
& = \E\left[\int_0^\infty e^{-\rho t}\left(\lambda G(X_t^{x,I^\varepsilon})+(1-\lambda)G(X^{y,I^\varepsilon}_t)- G(\lambda X^{x,I^\varepsilon}_t+(1-\lambda)X^{y,I^\varepsilon}_t \right)\d t\right] +\varepsilon \\
& = \E\left[\int_0^\infty e^{-\rho t} \frac{c_1}{2} \lambda(1-\lambda) |X_t^{x,I^\varepsilon}-X^{y,I^\varepsilon}_t|^2\d t\right]+\varepsilon
= \frac{c_1}{2}  \lambda(1-\lambda) \int_0^\infty e^{-\rho t} \big| e^{t\mathcal{A}}(x-y)\big|_H^2\d t+\varepsilon\\
& \leq \frac{c_1}{2}  \lambda(1-\lambda) \int_0^\infty e^{-\rho t} e^{-2\delta t}|x-y|_{H}^2\d t+\varepsilon
=\frac{c_1}{2(\rho+2\delta)}\lambda(1-\lambda)|x-y|_{H}^2+\varepsilon.
\end{align*}
By arbitrariness of $\varepsilon$, the claim follows.
\vspace{0.15cm}

\emph{Proof of (v)}. This follows by \cite{LasryLions}, end of page 265.
\end{proof}

Note that, since $V$ is finite on $H$ and $G$ is uniformly bounded from below on $H$, we can restrict the optimization \eqref{eq:newV} to the class of controls
\begin{equation}
    \label{eq:admissiblesetfinal}
\hat{\mathcal{I}}_{0}:= \left\{I=(\vartheta,\nu)\in\mathcal{I}_{0}: \  \E\bigg[\int_{0^-}^{\infty} e^{-\rho t}\d \nu_{t}\bigg]<\infty\right\};
\end{equation}
that is, we have 
\begin{equation}
\label{eq:newnewV}
V(x)=\hat V (x):=\inf_{(\vartheta,\nu)\in\hat{\mathcal{I}}_0}\mathcal{J}(x;I), \quad x \in H.
\end{equation}


\subsection{Dynamic programming equation and viscosity solutions}
\label{sec:viscosity-V}

In this section we show that $V$ is a viscosity solution to the dynamic programming equation associated to \eqref{eq:newV}.
To that end, we consider the variational inequality with gradient constraint
\begin{equation}\label{VIOC}
\max\Big\{(\rho-\mathcal{G})v(x)-{G}(x),\  \sup_{\theta\in\Theta}\big\{- \langle D{v}(x),\theta\rangle_{H}-1\big\} \Big\}=0, \quad x \in H,
\end{equation}
where $\Theta$ is as in \eqref{setTHETA} and where we have defined the second-order differential operator $\mathcal{G}$, formally associated to the process $X^{x,0}$ and acting on sufficiently smooth functions $v:H\to \R$, as
\begin{equation}
\label{eq:generator}
[\mathcal{G}v](x):= \langle \mathcal{A}x,Dv (x)\rangle_H +\frac{1}{2} \mbox{Tr}\left[\sigma\sigma^{*}D^{2}v(x)\right].
\end{equation}

Recall that $\mathcal{D}(\A) \subseteq H$ denotes the domain of the operator $\A$ and endow it with the graph norm $|\cdot|_{\mathcal{D}(\mathcal{A})}$; that is,
\begin{equation}
\label{eq:graphnorm}
|x|^2_{\mathcal{D}(\mathcal{A})}:= {|x|^2_H + |\mathcal{A}x|^2_H}, \ \ \ \ \ x\in \mathcal{D}(\A).
\end{equation}

\subsubsection{Dynkin's formulae}
In order to provide the definition of viscosity solution to \eqref{VIOC}, we first introduce the suitable class of test functions, and we then prove  Dynkin's formulae for those functions. 
Let us start by  introducing the set of functions
\begin{align}
\label{ClassX}
\mathcal{X}:=&\Big\{\varphi\in C^{2}(H):  \ D\varphi\in C(H;\mathcal{D}(\A)), \ \  \sigma\sigma^{*} D^2\varphi\in C(H;\mathcal{L}_{1}(H)), \ 
 \nonumber \\
& \ \ \ \ \  \ \ \ \ \varphi(x)\leq \hat{C} (1+|x|_{H}^{2}), \  |D\varphi(x)|_{\mathcal{D}(A)}\leq \hat{C}(1+|x|_{H}), \ |\sigma\sigma^{*}D^{2}\varphi(x)|_{\mathcal{L}_{1}(H)}\leq \hat C
\Big\}.
\end{align}

 In the subsequent analysis we shall use that any $I\in \mathcal{I}$ is identified with an element $(\vartheta, \nu) \in \mathcal{I}_0$ (see Lemma \ref{lemma:newcontrol} and the discussion afterwards) and that the optimization in \eqref{eq:newV} can be actually performed over $\hat{\mathcal{I}}_0$ (cf.\ \eqref{eq:admissiblesetfinal}).
Also, 
 in the rest of this paper, for any measurable function $f:H \to \mathbb{R}$, we set, for any $\tau\in \mathcal{T}$ and any $I\in \mathcal{I}$,
\begin{equation}
\label{eq:convention}
e^{-\rho\tau} f(X^{x,I}_{\tau}) := \limsup_{t \to \infty}e^{-\rho t} f(X^{x,I}_{t}) \quad \text{on} \,\, \{\tau=+\infty\}.
\end{equation}

We then have the next Dynkin formula for functions belonging to $\mathcal{X}$.

\begin{proposition}
\label{Dyn1} 
Let $x\in H$, $\varphi\in\mathcal{X}$, $I\in \hat{\mathcal{I}}_0$. Let 
 $\tau \in \mathcal{T}$ be bounded and such that there exists $M>0$ such that $\sup_{0 \leq t \leq \tau}|X^{x,I}_{t}| \leq M$. Then, the following Dynkin formula holds true: 
\begin{equation}
\label{Dynkin}
\E\left[e^{-\rho \tau}\varphi(X^{x,I}_{\tau})\right]=\varphi(x)+\E\left[\int_{0}^{\tau} e^{-\rho t}[(\mathcal{G}-\rho )\varphi](X^{x,I}_{t})\d t+ \int_{0^-}^{\tau}e^{-\rho t}\langle D\varphi(X^{x,I}_{t}), \vartheta_{t}\rangle_{H}\d \nu_{t}\right].
\end{equation}
\end{proposition}
\color{black}
\begin{proof}
Let $(\mathcal{A}_{n})_{n\in \mathbb{N}}$ be the Yosida approximants of $\mathcal{A}$ (see, e.g., \cite[Eq.\ (3.7), Ch.\ II]{EN}), and let $X^{n;x,I}$ denote the solution to 
$$
\d X^{n;x,I}_{t}=\mathcal{A}_{n} X^{n;x,I}_{t}\d t+\sigma \d W_{t}+\d I_{t}, \ \ \ X_{0^{-}}=x \in H; 
$$
that is (cf.\ \eqref{mild}),
$$
X_{t}^{n;x,I}=e^{t\A_{n}}x+W_{t}^{\A_{n},\sigma}+ \int_{0^-}^{t}e^{(t-s)\A_{n}}\d I_{s},\quad t\geq 0.
$$
Also, set
$$
[\mathcal{G}_{n}\varphi](x):= \langle \mathcal{A}_nx,D\varphi (x)\rangle+\frac{1}{2} \mbox{Tr}\left[\sigma\sigma^{*}D^{2}\varphi(x)\right].
$$

Thanks to \cite[Lemma 3.4(ii), Ch.\ 2]{EN} and due to the fact that $\mathcal{A}$ is dissipative (cf.\ Assumption \ref{ass:A}), we then have, 
\begin{equation}
\label{convAstar}
\mathcal{A}_{n}y \to \mathcal{A}y \ \ \mbox{as} \  n\uparrow \infty  \ \ \mbox{and} \ \ \ \sup_{n\in\mathbb{N}}|\mathcal{A}_ny|_H\leq |\mathcal{A}y|_H \quad \forall y \in \mathcal{D}(\A).
\end{equation}
From the second claim of \eqref{convAstar}, it follows that 
\begin{equation}\label{pippo}
\sup_{n\in\mathbb{N}}|\mathcal{A}_ny|_H\leq |y|_{\mathcal{D}(A)}, \ \ \ \forall y\in \mathcal{D}(\mathcal{A}).
\end{equation}
Recalling that $\tau$ is bounded, so that $\tau\leq T<\infty$ for some $T>0$, we may now use  
  \cite[Thm.\ 27.2]{Metivier}, upon noticing that the stochastic integral 
  $$
  \int_{0}^{\tau} \langle \mathcal{A}_{n} X_{t}^{n;x,I},D\varphi(X_{t}^{n;x,I}) \rangle_{H}\d W_{t}
  $$
  vanishes in expectations by Lemma \ref{lem:convdom}(i) and by the growth assumption on  $D\varphi$.
  We therefore obtain the following Dynkin's formula
 : \begin{align}
\label{Dynkin:yos}
& \E[e^{-\rho \tau}\varphi(X^{n;x,I}_{\tau})]=\varphi(x)+\E\bigg[\int_{0}^{\tau} e^{-\rho t}\left[[(\mathcal{G}_{n}-\rho )\varphi](X^{n;x,I}_{t})\d t+\langle D\varphi(X^{n;x,I}_{t}),  \vartheta_t\rangle_{H}\d\nu_{t}\right]\bigg].
\end{align}

We now aim at taking $n\to\infty$ in \eqref{Dynkin:yos} to get the claim.
By Lemma \ref{lem:convdom}(ii) (see also \cite[Prop.\ 1.132]{FGS} for the regular control case and references therein), it holds
that $X_{t}^{n;x,I} \to X_{t}^{x,I}$ in probability for every $t\in[0,T]$ as $n \to \infty$. The only convergences that require some attention are 
\begin{equation}
\label{eq:nontrivialconv}
\lim_{n\to \infty}\E\left[\int_0^{\tau} e^{-\rho t}\langle  X^{n;x,I}_t, \A_{n}D \varphi(X^{n;x,I}_{t})\rangle_{H} \d t\right] = \E\left[\int_0^{\tau} e^{-\rho t}\langle X^{x,I}_{t}, {\A} D\varphi (X^{x,I}_t)\rangle_{H} \d t\right]
\end{equation}
and
\begin{equation}
\label{eq:nontrivialconv-1}
\lim_{n\to \infty}\E\bigg[\int_{0^-}^{\tau} e^{-\rho t}\langle D\varphi(X^{n;x,I}_{t}),  \vartheta_{t} \rangle_{H}\d \nu_{t}\bigg] =  \E\bigg[\int_{0^-}^{\tau} e^{-\rho t}\langle D\varphi(X^{x,I}_{t}),  \vartheta_{t}\rangle_{H}\d\nu_{t}\bigg].
\end{equation}
All the other convergences can be performed by using arguments as in the proof of \cite[Prop.\ 1.164]{FGS} (in the case of regular controls). 

As for \eqref{eq:nontrivialconv-1}, recalling \eqref{ClassX} one has 
\begin{eqnarray*}
&& \Big|\mathds{1}_{[0,\tau]}(t) e^{-\rho t} \langle D\varphi(X^{n;x,I}_{t}),  \vartheta_{t} \rangle_{H}\Big| \leq  \widehat{C}(1+ |X^{n;x,I}_{t}|_{H}) |{\vartheta}_t|_H
\end{eqnarray*}
and because $\tau \in \mathcal{T}$ is bounded (say, by some $T>0$) we have for $q\in(1,2)$, upon using H\"older's inequality,
\begin{align*}
& \E\bigg[\int_{0^-}^{T} \Big|\mathds{1}_{[0,\tau]}(t) e^{-\rho t}\langle D\varphi(X^{n;x,I}_{t}),  \vartheta_{t} \rangle_{H}\Big|^{q}\, \d \nu_{t}\bigg] \leq \widehat{C}\E\bigg[\int_{0^-}^T (1+ |X^{n;x,I}_t|^{q}) |\vartheta_{t}|^q_H \,\d\nu_t\bigg], \nonumber \\
& \leq \widehat{C}\E\bigg[\sup_{0\leq t \leq T} (1+ |X^{n;x,I}_t|^{q}) \int_{0^-}^T |\vartheta_{t}|^q_H \d\nu_t\bigg] \nonumber \\
& \leq \widehat{C}\E\bigg[\sup_{0\leq t \leq T} (1+ |X^{n;x,I}_t|^{\frac{2q}{2-q}}) \bigg]^{\frac{2-q}{2}}\E\bigg[\int_{0^-}^T |\vartheta_{t}|^{2}_H \d\nu_t\bigg]^{\frac{q}{2}} < \infty
\end{align*}
where the finiteness of the last terms above is due to the fact that $(\vartheta, \nu) \in \mathcal{I}_0$ (cf.\ \eqref{setI0}) and Lemma \ref{lem:convdom}(i).
Hence, Vitali's convergence theorem implies \eqref{eq:nontrivialconv-1}, upon recalling that  $X_{t}^{n;x,I} \to X_{t}^{x,I}$ in probability for every $t\in[0,T]$ as $n \to \infty$.

We now move on by proving the validity of \eqref{eq:nontrivialconv}. Notice that, if $(x_{n}) \subseteq H$, $ (y_n)\subset \mathcal{D}(\mathcal{A})$, and $x \in H$, $y\in \mathcal{D}(\mathcal{A})$,  we have, using \eqref{pippo},
\begin{align}
\label{eq:stimadiffence}
|\langle x_n,\A_{n} y_n\rangle-\langle x,\A y\rangle_H| & \leq
|\langle (x_n-x),{\A}_n y_n\rangle_H|
+|\langle x,{\A}_n (y_n-y)\rangle_H| \nonumber +|\langle x,({\A}_n-{\A})y\rangle_H|\\& \leq
|x_n-x|_H |{\A}_n y_n|_H
+|x|_H |\A_n (y_n-y)|_H +|x|_H |({\A}_n-{\A})y|_H\nonumber\medskip\\
&\leq |x_n-x|_H\,| y_n|_{\mathcal{D}(\mathcal{A})}+|x|_H\,|y_n-y|_{\mathcal{D}({\A})}+|x|_H|({\A}_n-{\A})y|_H.
\end{align}
Therefore, by using H\"older's inequality and also that $\tau \in \mathcal{T}$ is bounded (say, by some $T>0$) and such that there exists $M>0$ for which $\sup_{0 \leq \tau}|X^{x,I}_{t}| \leq M$, we find, 
\begin{align}
\label{eq:domcon-check}
&\E\left[\int_0^{\tau} e^{-\rho t}\Big|\langle X_t^{n;x,I}, \A_{n} D\varphi(X_t^{n;x,I})\rangle_{H} -\langle X_t^{x,I}, {\A} D\varphi(X^{x,I}_t)\rangle_{H}\Big|\,\d t\right] \nonumber \\
&\leq  \left( \E\left[\int_0^{\tau} e^{-\rho t} |X^{n;x,I}_t-X_t^{x,I}|_H^2 \d t\right]\,\E\left[\int_0^{\tau} e^{-\rho t}|D\varphi (X_t^{n;x,I})|^2_{\mathcal{D}({\A})}\d t \right]\right)^{1/2}\\
&
+ M \E\left[\int_0^{\tau} e^{-\rho t} |D\varphi(X_t^{n;x,I})-D\varphi(X_{t}^{x,I})|_{\mathcal{D}({\A})}\d t \right] \nonumber \\
&
+M\E\left[\int_0^{\tau} e^{-\rho t}|({\A}_n-{\A})D\varphi (X^{x,I}_t)|_H\d t \right]. \nonumber
\end{align}
 
We now verify that limits as $n\uparrow \infty$ can be interchanged with the expectations in order to show that the right-hand side of \eqref{eq:domcon-check} converges to zero and thus \eqref{eq:nontrivialconv} holds. 

By Lemma \ref{lem:convdom}(i) and the fact that that $X_{t}^{n;x,I} \to X_{t}^{x,I}$ in probability for every $t\in[0,T]$  when $n \to \infty$, it is not hard to be convinced that the first addend in the right-hand side of \eqref{eq:domcon-check} converges to zero by the dominated convergence theorem. 

Also, recalling \eqref{ClassX} one has for a constant $C>0$
\begin{eqnarray*}
&& \mathds{1}_{[0,\tau]}(t) e^{-\rho t} \Big|D\varphi(X^{n;x,I}_{t}) - D\varphi(X^{x,I}_{t})\Big|^2_{\mathcal{D}(\mathcal{A})} \leq {C}(1+ |X^{n;x,I}_{t}|^2_{H} + |X^{x,I}_{t}|^2_{H}).
\end{eqnarray*}
By Lemma \ref{lem:convdom}(i) the right-hand side of the previous equation are $\P \otimes \d t$-integrable over $\Omega \times [0,T]$, so that Vitali's convergence theorem can be invoked in order to show that the second addend in the right-hand side of \eqref{eq:domcon-check} convergence to zero, upon recalling that $D\varphi \in C(H;\mathcal{D}(\mathcal{A})$ and that, again, $X_{t}^{n;x,I} \to X_{t}^{x,I}$ in probability for every $t\in[0,T]$.

Similarly, also the last addend in the right-hand side of \eqref{eq:domcon-check} can be shown to converge to zero by Vitali's convergence theorem, upon noticing that by the right hand-side of \eqref{convAstar} and \eqref{ClassX} (see also \eqref{eq:graphnorm}) one has, for some $C>0$,
$$|({\A}_n-{\A})D\varphi (X^{x,I}_t)|^2_H \leq 4|{\A}D\varphi (X^{x,I}_t)|^2_H \leq {C}(1 + |X^{x,I}_{t}|^2_{H}),$$
with the last term above being $\P \otimes \d t$-integrable over $\Omega \times [0,T]$ due to Lemma \ref{lem:convdom}(i), and upon using also that $({\A}_n-{\A})y \to 0$ as $n \uparrow \infty$, for any $y \in \mathcal{D}(\mathcal{A})$ (cf.\ \eqref{convAstar}).
\end{proof}

The next proposition provides an inequality of Dynkin type for radial functions. Let us introduce the set of smooth radial functions (centered at $z\in H$) as
\begin{equation}\label{redial}
\mathcal{R}^{z}:=\big\{g: H\to\R: \ g(y):=g_{0}(|y-z|_{H}) \ \mbox{with} \  g_{0}\in C^{2}(\R^{+};\R), \ g'_{0}\geq 0, \  g'_{0}(0)=0\big\}.
\end{equation}
Notice that, if  $g\in \mathcal{R}^{z}$, then $g\in C^{2}(H;\R)$. However, 
since 
\begin{equation}\label{eq:gradient_radial}
D g(y)=\begin{cases}
\displaystyle{\frac{g_0^{\prime}(|y-z|_{H})}{{|y-z|_{H}}} (y-z), \quad \ \ \ \ \mbox{if} \  y \neq z, }\\
0,  \quad\quad  \ \ \ \ \ \ \ \ \ \ \ \ \ \ \  \ \ \ \ \ \ \ \ \ \ \mbox{if} \ y=z,
\end{cases}
\end{equation}  we do not have in general  $Dg(y)\in\mathcal{D}(\A)$, which prevents to use the Dynkin formula provided by  Proposition \ref{Dyn2}. Nevertheless, the inequality 
provided in the new result will allow to include also functions of this type  in the definition of viscosity solutions (see Definition \ref{def:vis}). 
\begin{proposition}
\label{Dyn2} 
Let $x\in H$, $z\in\mathcal{D}(\A)$, $I\in \hat{\mathcal{I}}_0$. Let  $\tau \in \mathcal{T}$ be bounded and such that there exists $M>0$ for which $\sup_{0\leq t \leq \tau}|X^{x,I}_{t}| \leq M$. Let $g_{0}\in C^{2}(\R^{+};\R)$ be nondecreasing and such that $g'_{0}(0)=0$, and  let $g(y):=g_{0}(|y-z|_{H})$.
Then, the following inequality form of Dynkin's formula holds true:
\begin{align*}
\label{Dynkinineq} 
  \E[e^{-\rho \tau}g(X^{x,I}_{\tau})]& \leq 
 g(x)+\E\bigg[\int_{0}^{\tau} e^{-\rho t}\Big[\langle \mathcal{A} z, Dg(X_{t}^{x,I})\rangle_{H} +\frac{1}{2} \mathrm{Tr}[\sigma\sigma^{*} D^{2}g(X_{t}^{x,I})]  -\rho g(X^{x,I}_{t})\Big]\d t\bigg] \\&+\E\bigg[\int_{0^-}^{\tau} e^{-\rho t}\langle Dg(X^{x,I}_{t}),  \vartheta_t\rangle_{H}\d\nu_{t}\bigg].
\end{align*}
\end{proposition}
\begin{proof}
As in the proof of Proposition \ref{Dyn1}, we get
\begin{align*}
&  \E[e^{-\rho \tau}g(X^{n;x,I}_{\tau})]\\&= g(x)+\E\bigg[\int_{0}^{\tau} e^{-\rho t}\Big((\mathcal{G}_{n}-\rho )g\Big)(X^{n;x,I}_{t})\d t\bigg] +\E\bigg[\int_{0^-}^{\tau} e^{-\rho t}\langle Dg(X^{n;x,I}_{t}),  \vartheta_t\rangle_{H}\d\nu_{t}\bigg]\\
& =g(x)+\E\bigg[\int_{0}^{\tau} e^{-\rho t}\Big(\langle \mathcal{A}_{n} X_{t}^{n,x,I}, Dg(X_{t}^{n,x,I})\rangle_{H} +\frac{1}{2} \mbox{Tr}[\sigma\sigma^{*} D^{2}g(X_{t}^{n,x,I})]  -\rho g(X^{n;x,I}_{t})\Big)\d t\bigg] \\& \ \ \ +\E\bigg[\int_{0^-}^{\tau} e^{-\rho t}\langle Dg(X^{n;x,I}_{t}),  \vartheta_t\rangle_{H}\d\nu_{t}\bigg].
\end{align*}
Considering \eqref{eq:gradient_radial}, the fact that $g_{0}'\geq 0$, and  the dissipativity of the Yosida approximants $\A_n$ inherited by the dissipativity of $\mathcal{A}$, we get
\begin{align*}
\langle  \mathcal{A}_{n} X_{t}^{n,x,I}, Dg(X_{t}^{n,x,I})\rangle_{H} &= \langle \mathcal{A}_{n} (X_{t}^{n,x,I}-z), Dg(X_{t}^{n,x,I})\rangle_{H} +  \langle \mathcal{A}_{n} z, Dg(X_{t}^{n,x,I})\rangle_{H}\\& \leq   \langle \mathcal{A}_{n} z, Dg(X_{t}^{n,x,I})\rangle_{H}.
\end{align*}
Hence, 
\begin{align*}
&  \E[e^{-\rho \tau}g(X^{n;x,I}_{\tau})]\\& \leq 
 g(x)+\E\bigg[\int_{0}^{\tau} e^{-\rho t}\Big(\langle \mathcal{A}_{n} z, Dg(X_{t}^{n,x,I})\rangle_{H} +\frac{1}{2} \mbox{Tr}[\sigma\sigma^{*} D^{2}g(X_{t}^{n,x,I})]  -\rho g(X^{n;x,I}_{t})\Big)\d t\bigg] \\&\ \ \ +\E\bigg[\int_{0^-}^{\tau} e^{-\rho t}\langle Dg(X^{n;x,I}_{t}),  \vartheta_t\rangle_{H}\d\nu_{t}\bigg].
\end{align*}
Now, by the reverse Fatou's lemma (thanks to integrability properties as those in the proof of Proposition \ref{Dyn1}) we can take limit superior as $n\to \infty$ in order to obtain the claim.
\end{proof}

The following corollary will be exploited in the proof of the viscosity supersolution property of $V$ (see Theorem \ref{thm:Viscous} below).
\begin{corollary}
\label{Dyn3} 
Let $x\in H$,  $z\in\mathcal{D}(\A)$, $I\in \hat{\mathcal{I}}_0$. Let $\tau \in \mathcal{T}$ be bounded and assume that, for some $\varepsilon>0$, it holds
$
|X^{x,I}_{t}-z|_{H}\leq \varepsilon$ for all  $t\in[0,\tau].
$
Then, there exists a constant $c_{o}>0$ such that 
\begin{equation}
\begin{aligned}
\label{Dynkinineq} 
&\E\left[e^{-\rho\tau}|X^{x,I}_{\tau}-z|^3_{H}\right]\leq c_{o} (\varepsilon+\varepsilon^2)\,\E		\left[\int_{0^-}^{\tau}e^{-\rho t} (\d t+\d \nu_{t})\right].
\end{aligned}
\end{equation}
\end{corollary}
\begin{proof}
Notice that, setting
$$g(y):=|y-z|_H^3, \quad y \in H,
$$
one has
$$
Dg(y)=3|y-z|_H (y-z), \ \ \ D^2g(y)=3|y-z|_{H}^{-1}(y-z)\otimes (y-z) +3|y-z|_H \cdot \textbf{Id}_H.
$$
In particular
$$
\mbox{Tr}\left[\sigma\sigma^{*} D^{2}g(y)\right]\leq |D^{2}g(y)|_{\mathcal{L}(H)} \mbox{Tr}\left[\sigma\sigma^{*}\right]\leq 6 |y-z|_{H}|\sigma\sigma^{*}|_{\mathcal{L}_{1}(H)}.
$$
Then,  the claim follows from Proposition \ref{Dyn2}.
\end{proof}

\subsubsection{Dynamic Programming Principle}

The next result is the dynamic programming principle for the singular stochastic control problem \eqref{eq:newV}. We provide a sketch of its proof in Appendix \ref{sec:appendixA}.

\begin{proposition}[Dynamic Programming Principle for $V$]
\label{DPPOC}
Recall \eqref{eq:admissiblesetfinal}. We have 
\begin{equation}
\label{eq:DP}
V(x)=\inf_{(I, \tau^I)  \in\hat{\mathcal{I}_0} \times \mathcal{T}}\E\left[\int_{0^-}^{\tau^I} e^{-\rho t} \left(G(X^{x,I}_{t})\d t+\d \nu_{t}\right)+ e^{-\rho \tau^I} V(X_{\tau^I}^{x,I})\right], \quad x \in H.
\end{equation}
\end{proposition}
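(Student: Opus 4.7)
The plan is to prove the two inequalities $V(x) \leq \mathrm{RHS}$ and $V(x) \geq \mathrm{RHS}$ separately, following the standard dynamic-programming blueprint and exploiting the semigroup (flow) structure of the mild solution \eqref{mild} together with the strong Markov property of the controlled process. The crucial tool beyond what is used in the finite-dimensional case is that, thanks to uniqueness of the mild solution to \eqref{eq:state} and the semigroup property of $(e^{t\mathcal{A}})_{t\geq 0}$, the process $(X^{x,I}_{\tau+s})_{s\geq 0}$ admits a pathwise restart representation in terms of the time-shifted cylindrical Wiener process $W^\tau_{\cdot} := W_{\tau+\cdot}-W_\tau$ and the time-shifted control.

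For the easier inequality $V(x) \geq \mathrm{RHS}$, pick any $I = (\vartheta,\nu) \in \hat{\mathcal{I}}_0$ and split
\begin{equation*}
\mathcal{J}(x;I) = \mathbb{E}\bigg[\int_{0^-}^\tau e^{-\rho t}\big(G(X^{x,I}_t)\,\mathrm{d}t + \mathrm{d}\nu_t\big)\bigg] + \mathbb{E}\bigg[\int_{(\tau,\infty)} e^{-\rho t}\big(G(X^{x,I}_t)\,\mathrm{d}t + \mathrm{d}\nu_t\big)\bigg].
\end{equation*}
In the second term, factor out $e^{-\rho\tau}$, change variables $s=t-\tau$, and condition on $\mathcal{F}_\tau$. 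Since $W^\tau$ is a cylindrical Wiener process on the shifted filtration $(\mathcal{F}_{\tau+s})_{s\geq 0}$ independent of $\mathcal{F}_\tau$, and since the shifted control $I^\tau := (\vartheta_{\tau+\cdot}, \nu_{\tau+\cdot}-\nu_\tau)$ belongs to $\hat{\mathcal{I}}_0$ on that filtration, the flow property yields that, conditional on $\mathcal{F}_\tau$, $(X^{x,I}_{\tau+s})_{s\geq 0}$ is precisely the mild solution of \eqref{eq:state} started at $X^{x,I}_\tau$ and driven by $W^\tau$ and $I^\tau$. Hence the conditional inner expectation equals $\mathcal{J}(X^{x,I}_\tau; I^\tau) \geq V(X^{x,I}_\tau)$, and taking outer expectation and infimum over $I$ gives the desired inequality.

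For the reverse inequality $V(x) \leq \mathrm{RHS}$, fix $\varepsilon > 0$ and an arbitrary $I \in \hat{\mathcal{I}}_0$. Using the continuity of $V$ (indeed $V \in C^{1,\mathrm{Lip}}(H)$ by Proposition \ref{prop:regularityV}(v)) and the measurable dependence of $\mathcal{J}(\cdot;J)$ on the initial datum, a Kuratowski--Ryll-Nardzewski-type measurable selection applied to the multifunction $y \mapsto \{J \in \hat{\mathcal{I}}_0 : \mathcal{J}(y;J) \leq V(y)+\varepsilon\}$ yields an $\mathcal{F}_\tau$-measurable family $\omega \mapsto \tilde{I}^\omega \in \hat{\mathcal{I}}_0$ (on the shifted probability space driven by $W^\tau$) satisfying $\mathcal{J}(X^{x,I}_\tau(\omega);\tilde{I}^\omega) \leq V(X^{x,I}_\tau(\omega))+\varepsilon$. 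Concatenating $I$ on $[0^-,\tau]$ with the $\tau$-shifted copy of $\tilde{I}$ on $(\tau,\infty)$ produces an admissible $\bar{I}\in\hat{\mathcal{I}}_0$, and the same flow argument gives
\begin{equation*}
V(x) \leq \mathcal{J}(x;\bar{I}) \leq \mathbb{E}\bigg[\int_{0^-}^\tau e^{-\rho t}\big(G(X^{x,I}_t)\,\mathrm{d}t + \mathrm{d}\nu_t\big) + e^{-\rho\tau}\big(V(X^{x,I}_\tau)+\varepsilon\big)\bigg].
\end{equation*}
Infimizing over $I$ and letting $\varepsilon\downarrow 0$ closes the loop.

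The main obstacle is the measurable selection step: one must equip $\hat{\mathcal{I}}_0$ with a measurable structure (for instance via its natural embedding into a Polish space of $H$-valued Radon measures on $[0,\infty)$ of finite exponential mass) under which $(y,J)\mapsto \mathcal{J}(y;J)$ is jointly measurable and the $\varepsilon$-argmin multifunction has measurable graph. A secondary subtlety, characteristic of the singular setting, is the bookkeeping of the jump of $\nu$ at the stopping time $\tau$: the convention $\nu_{0^-}=0$ combined with the decomposition of the time axis into $[0^-,\tau]$ and $(\tau,\infty)$ ensures that the jump at $\tau$ is counted once, within the pre-$\tau$ integral, while the restart control $I^\tau$ (resp.\ $\tilde{I}$) satisfies $\nu^\tau_{0^-}=\nu^\tau_0=0$ so no double-counting arises, consistently with both sides of \eqref{eq:DP}.
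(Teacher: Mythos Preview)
Your proposal is correct and aligns with the paper's own treatment: the paper does not give a detailed proof but simply states that the argument follows the finite-dimensional blueprint (citing \cite{DeAM}, \cite{HS}, \cite{Ma}) and that the key ingredient is the flow property of the controlled dynamics, which here is guaranteed by the semigroup property of $(e^{t\mathcal{A}})_{t\geq 0}$. Your two-inequality scheme, the restart via the shifted Wiener process and shifted control, and the measurable selection for $\varepsilon$-optimal controls are exactly the steps that such a standard argument entails, so there is nothing to add or correct.
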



\subsubsection{The value function as viscosity solution of the variational inequality}
We are  ready to provide the definition of viscosity solution to \eqref{VIOC} and to prove that $V$ defined in \eqref{eq:newV} is actually a viscosity solution to the variational inequality.
\begin{definition}[Viscosity solution]\label{def:vis}
\medskip
\begin{enumerate}[(i)]
\item[]
\medskip
\item 
\medskip
We say that $v\in C(H)$ is a viscosity supersolution to \eqref{VIOC} at $x\in H$ if, for every $\psi=\varphi +g$ with  $\varphi\in \mathcal{X}$, $g\in\mathcal{R}^{0}$  such that $0=v(x)-\psi (x)=\min(v-\psi)$, one has
$$\max\Big\{\rho\psi(x) -\frac{1}{2}\mathrm{Tr}[\sigma\sigma^{*} D^{2}\psi (x)]-\langle x,\A D\varphi(x)\rangle_{H}-G(x), \ \sup_{\theta\in\Theta}\big\{- \langle D\psi(x),\theta\rangle_{H}-1\big\} \Big\}\geq 0.$$
\item We say that $v\in C(H)$ is a viscosity subsolution to \eqref{VIOC} at $x\in H$ if,  for every $\psi=\varphi -g$ with  $\varphi\in \mathcal{X}$, $g\in\mathcal{R}^{0}$  such that $0=v(x)-\psi (x)=\max(v-\psi)$, one has
$$\max\Big\{\rho\psi(x) -\frac{1}{2}\mathrm{Tr}[\sigma\sigma^{*} D^{2}\psi (x)]-\langle x,\A D\varphi(x)\rangle_{H}-G(x), \ \sup_{\theta\in\Theta}\big\{- \langle D\psi(x),\theta\rangle_{H}-1\big\} \Big\}\leq 0.$$
\item We say that $v\in C(H)$ is a viscosity solution to \eqref{VIOC} at $x\in H$ if it is both a viscosity super- and subsolution.
\end{enumerate}
\end{definition}
\begin{remark}\label{rem:vis}
In the context of classical (regular) stochastic control, the definition of viscosity solution based on the  set of test functions $\mathcal{X}$ combined with the set of radial functions $\mathcal{R}^{0}$ is typically sufficient to prove comparison results for viscosity solutions of the PDE (see \cite[Ch.\,3]{FGS}); hence,  to characterize the value function of the control problem as the unique solution to the associated dynamic programming  equation. 

In this paper, we do not address the relevant topic of uniqueness (see  Remark \ref{rem:uniqueness} below), leaving it as a future research topic.  Instead, we will use test functions from the set $\mathcal{X}$  of quadratic form  in Section \ref{sec:connection} in order to prove our regularity result (see, in particular, the proof of Proposition \ref{prop:regularity}).
\end{remark}

\begin{theorem}
\label{thm:Viscous}
$V$ is a viscosity solution to \eqref{VIOC} at all $x\in H$.
\end{theorem}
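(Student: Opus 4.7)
The plan is to prove the sub- and supersolution properties of $V$ separately, in both cases combining the Dynamic Programming Principle of Proposition \ref{DPPOC} with the Dynkin-type identities of Propositions \ref{Dyn1} and \ref{Dyn2} applied to test functions $\varphi \in \mathcal{X}$. Throughout, I identify any $I \in \mathcal{I}$ with the pair $(\vartheta, \nu) \in \hat{\mathcal{I}_0}$ through Lemma \ref{lemma:newcontrol} and \eqref{eq:admissiblesetfinal}, so that the DPP takes the form \eqref{eq:DP}.

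\textbf{Subsolution.} Let $\varphi \in \mathcal{X}$ satisfy $0 = V(x) - \varphi(x) = \max(V - \varphi)$, so that $V \leq \varphi$ on $H$ with equality at $x$. For the gradient constraint, fix $\theta \in \Theta$ and apply \eqref{eq:DP} with $\tau \equiv 0$ to the admissible control performing the single lump-sum action $\nu_t = \epsilon\mathds{1}_{[0,\infty)}(t)$, $\vartheta \equiv \theta$: since $\langle q, \theta\rangle_H = 1$ this costs exactly $\epsilon$ and takes the state from $x$ to $x + \epsilon\theta$, so $V(x) \leq \epsilon + V(x+\epsilon\theta) \leq \epsilon + \varphi(x+\epsilon\theta)$. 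Using $V(x) = \varphi(x)$, dividing by $\epsilon$, and sending $\epsilon \downarrow 0$ yields $-\langle D\varphi(x),\theta\rangle_H - 1 \leq 0$; taking the supremum over $\theta \in \Theta$ finishes this part. For the continuation inequality, apply \eqref{eq:DP} with $I\equiv 0$ and a small $\tau = h$, use $V(X^{x,0}_h) \leq \varphi(X^{x,0}_h)$, and rewrite $\E[e^{-\rho h}\varphi(X^{x,0}_h)]$ via Dynkin's formula of Proposition \ref{Dyn1}; after cancelling $\varphi(x)$ one is left with $0 \leq \E[\int_0^h e^{-\rho t}(G - (\rho-\mathcal{G})\varphi)(X^{x,0}_t)\,\d t]$, and letting $h \downarrow 0$ gives $(\rho - \mathcal{G})\varphi(x) - G(x) \leq 0$ by continuity of $G$ and $\mathcal{G}\varphi$.

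\textbf{Supersolution.} Now let $\varphi \in \mathcal{X}$ satisfy $0 = V(x) - \varphi(x) = \min(V - \varphi)$, so $V \geq \varphi$ on $H$ with equality at $x$, and argue by contradiction: suppose the max in \eqref{VIOC} is strictly negative at $x$. The continuity properties built into $\mathcal{X}$ (notably $D\varphi \in C(H;\mathcal{D}(\mathcal{A}))$ and $\sigma\sigma^* D^2\varphi \in C(H;\mathcal{L}_1(H))$) guarantee continuity of $y\mapsto \mathcal{G}\varphi(y)$ and of $y\mapsto \sup_{\theta\in\Theta}\{-\langle D\varphi(y),\theta\rangle_H - 1\}$, so one can choose $r,\eta > 0$ with
\begin{equation*}
(\rho-\mathcal{G})\varphi(y) - G(y) \leq -\eta, \qquad 1 + \langle D\varphi(y),\theta\rangle_H \geq \eta \quad \forall\,\theta \in \Theta,\ \forall\,y\in \overline{B_r(x)}.
\end{equation*}
For each $\epsilon > 0$, pick an $\epsilon$-optimal control $I^\epsilon = (\vartheta^\epsilon,\nu^\epsilon)\in \hat{\mathcal{I}_0}$, set $\tau^\epsilon := \tau_r^\epsilon \wedge h$ with $\tau_r^\epsilon$ the first exit time of $X^{x,I^\epsilon}$ from $B_r(x)$, and combine the DPP \eqref{eq:DP}, $V\geq\varphi$, $V(x)=\varphi(x)$, and Dynkin's formula of Proposition \ref{Dyn1} applied to $\varphi$ up to $\tau^\epsilon$. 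Substituting the strict inequalities on the integrands (which hold while the trajectory stays in $\overline{B_r(x)}$) should yield an estimate of the form
\begin{equation*}
\epsilon \geq \eta\,\E\!\left[\int_0^{\tau^\epsilon} e^{-\rho t}\d t + \int_{0^-}^{\tau^\epsilon} e^{-\rho t}\d\nu^\epsilon_t\right] - \mathcal{E}^\epsilon,
\end{equation*}
where $\mathcal{E}^\epsilon \geq 0$ is a boundary correction coming from the possible jump of $X^{x,I^\epsilon}$ out of $B_r(x)$ at time $\tau^\epsilon$, at which the strict inequalities may fail.

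\textbf{Main obstacle.} The crux is to absorb $\mathcal{E}^\epsilon$ into the positive right-hand-side term above and to bound the resulting quantity below by a strictly positive constant independent of $\epsilon$, so that letting $\epsilon \downarrow 0$ produces the desired contradiction. Heuristically, either the near-optimal trajectory remains in $B_r(x)$ for a non-trivial amount of time -- making the $\d t$-integral nondegenerate -- or it escapes through a large jump -- making the $\d\nu^\epsilon$-integral large -- and both alternatives must be ruled out simultaneously. In finite dimensions one would close the argument by compactness; in our infinite-dimensional Hilbert setting compactness is unavailable, and this is exactly where the ``novel argument'' advertised in the introduction enters. The idea is to apply the inequality form of Dynkin's formula (Proposition \ref{Dyn2}) to the map $y \mapsto |y-x|^3_H$: the dissipativity $\langle \mathcal{A}x,x\rangle_H \leq -\delta|x|^2_H$ from Assumption \ref{ass:A} is essential here, as it eliminates the otherwise unbounded drift term and produces a quantitative bound of $\E[e^{-\rho\tau^\epsilon}|X^{x,I^\epsilon}_{\tau^\epsilon} - x|^3_H]$ in terms of $\E[\int_{0^-}^{\tau^\epsilon}e^{-\rho t}\d\nu^\epsilon_t]$ (plus negligible Gaussian and discount contributions controlled through Assumption \ref{ass:sigma}). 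This dominates $\mathcal{E}^\epsilon$ by the $\d\nu^\epsilon$-term itself, restores the required positive lower bound, and closes the contradiction.
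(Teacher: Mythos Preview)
Your subsolution argument is essentially the paper's: lump-sum control for the gradient constraint, null control plus Dynkin for the elliptic inequality.

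For the supersolution you have the right toolbox (contradiction, Proposition~\ref{Dyn2}, dissipativity), but your organisation differs from the paper's and leaves the closing step imprecise. Two points are worth noting.

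First, the paper does \emph{not} use $\varepsilon$-optimal controls. It fixes the ball radius $\varepsilon>0$, takes an \emph{arbitrary} $I=(\vartheta,\nu)\in\hat{\mathcal I}_0$ with exit time $\tau_\varepsilon$, and combines Propositions~\ref{Dyn1} and~\ref{Dyn2} in a single chain
\[
V(x)-\E\Bigl[\!\int_{0^-}^{\tau_\varepsilon}\!e^{-\rho t}(G\,\d t+\d\nu_t)+e^{-\rho\tau_\varepsilon}V(X_{\tau_\varepsilon})\Bigr]
\le -\varepsilon^{3}\E[e^{-\rho\tau_\varepsilon}]+\E\Bigl[\!\int_0^{\tau_\varepsilon}\!3|\sigma\sigma^*|_{\mathcal L_1}\varepsilon\,e^{-\rho t}\d t+\!\int_{0^-}^{\tau_\varepsilon}\!3\varepsilon^{2}e^{-\rho t}\d\nu_t\Bigr]-\eta\,\E\Bigl[\!\int_{0^-}^{\tau_\varepsilon}\!e^{-\rho t}(\d t+\d\nu_t)\Bigr].
\]
Only \emph{then} does it take the supremum over $I$ and invoke the DPP to set the left side to $0$. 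Using the identity $e^{-\rho\tau_\varepsilon}+\rho\int_0^{\tau_\varepsilon}e^{-\rho t}\d t=1$ converts $-\varepsilon^{3}\E[e^{-\rho\tau_\varepsilon}]$ into the constant $-\rho\varepsilon^{3}$ plus an integral term, and for $\varepsilon$ small (relative to $\eta$) every integrand becomes nonpositive, yielding $0\le -\rho\varepsilon^{3}$. No uniform-in-$\varepsilon$ lower bound on the integrals is needed, and there is no auxiliary truncation time $h$.

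Second, the role of the cube $|y-x|^3$ is not to ``absorb a boundary jump correction $\mathcal E^\varepsilon$'' in the sense you describe. The trick is to add and subtract $\E\bigl[e^{-\rho\tau_\varepsilon}|X_{\tau_\varepsilon}-x|^3\bigr]$: the \emph{negative} copy, combined with $|X_{\tau_\varepsilon}-x|\ge\varepsilon$ on $\{\tau_\varepsilon<\infty\}$, produces the pivotal term $-\varepsilon^{3}\E[e^{-\rho\tau_\varepsilon}]$; the \emph{positive} copy is controlled by Proposition~\ref{Dyn2}, whose inequality form (driven by dissipativity of $\mathcal A$) yields only the small terms $3|\sigma\sigma^*|_{\mathcal L_1}\varepsilon$ and $3\varepsilon^{2}$ above. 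So the cube is the \emph{source} of the contradiction constant, not merely a device to tame overshoot.

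Your scheme with $\varepsilon$-optimal controls can in principle be made to work, but closing it requires exactly the kind of uniform positivity you flag as the ``main obstacle'', and you do not indicate how to obtain it. The paper's route sidesteps this entirely: arbitrary control, supremum after the estimate, then shrink the ball.
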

\begin{proof}
\medskip
(\emph{Subsolution property.})
Let  $x\in H$ and $\psi=\varphi+g$ with $\varphi\in \mathcal{X}$, $g\in \mathcal{R}^{0}$  such that $0=V(x)-\psi(x)=\max(V-\psi)$.
\medskip

\emph{Step 1.}
For $\theta \in \Theta$ and $\zeta>0$, consider the control
$$I_\cdot=(\vartheta_\cdot,\nu_\cdot)\equiv (\theta,\hat{\nu})\in \hat{\mathcal{I}_0},$$
with $\hat{\nu}_{0^-}=0$ and $\hat{\nu}_t=\zeta$ for any $t\geq0$.
By Proposition \ref{DPPOC}, we have for $h>0$
\begin{equation}
\label{eq:conseqDPP}
V(x)\leq\E\left[\int_{0}^{h} e^{-\rho t} G(X^{x,I}_{t})\d t + \zeta + e^{-\rho h} V(X_{h}^{x,I})\right].
\end{equation}
We now aim at taking limits as $h\to 0^+$ in \eqref{eq:conseqDPP}. To that end, note that the limits in the right-hand side of \eqref{eq:conseqDPP} can be interchanged with the expectation: By the monotone convergence theorem for the integral term; because of the dominated convergence theorem for the third addend, since $V$ has sub-quadratic growth (cf.\ Proposition \ref{prop:regularityV}), and since $X^{x,I}_h= X^{x,0}_h + \zeta \theta$ (cf.\ \eqref{eq:state}) and $\E[\sup_{t \in [0,T]}|X^{x,0}_t|^2] < \infty$, for any $T>0$, by \eqref{estimatesup}. Hence, due to $X^{x,I}_h\to x+ \zeta \theta$ as $h\to 0^+$, we find
\begin{equation}
\label{eq:limitsub}
V (x)\leq \zeta + V(x+\zeta \theta). 
\end{equation}

But then, \eqref{eq:limitsub} yields
$$
\psi (x)\leq \zeta + \psi(x+\zeta \theta),
$$
which, dividing by $\zeta$ and letting $\zeta\to 0^+$, in turn gives (cf.\ Proposition \ref{prop:regularityV})
$$-\langle D\psi(x), \theta\rangle_H\leq 1.$$
Since the latter holds for every $\theta\in\Theta$, we find
$$\sup_{\theta\in\Theta} \left\{-\langle D\psi(x),\theta\rangle_H-1\right\}\leq 0.$$
\medskip

\emph{Step 2.}
Let now $I=\textbf{0}$ be the null control. Setting
$$\tau_R:=\inf\{t\geq0:\, |X^{x,0}_t|_H \geq R\}$$ (with $\inf \emptyset = + \infty$) and letting $h>0$,  by  Proposition \ref{DPPOC} we have
$$
\psi(x)\leq\E\left[\int_{0}^{\tau_R \wedge h} e^{-\rho t} G(X^{x,0}_{t})\d t+ e^{-\rho(\tau_R \wedge h)} \psi(X_{\tau_R \wedge h}^{x,0})\right].
$$
Using now Propositions \ref{Dyn1} and \ref{Dyn2}, we find
$$
\E\bigg[\int_{0}^{\tau_R \wedge h} e^{-\rho t} \Big(\rho\psi(X_t^{x,0}) -\frac{1}{2}\mathrm{Tr} [\sigma\sigma^{*}D^{2}\psi(X_t^{x,0})]- \langle X_t^{x,0}, \mathcal{A}D\varphi(X_t^{x,0})\rangle - G(X^{x,0}_{t})\Big)\d t\bigg]\leq 0.
$$
Dividing by $h$, recalling that $\varphi \in \mathcal{X}$ and using that $\E[\sup_{t \in [0,T]}|X^{x,0}_t|] < \infty$, for any $T>0$ by \eqref{estimatesup}, we can invoke the integral mean-value and the dominated convergence theorems when letting $h\to 0^+$, and we obtain 
$$\rho\psi(x) -\frac{1}{2}\mathrm{Tr}[\sigma\sigma^{*} D^{2}\psi (x)]-\langle x,\A D\varphi(x)\rangle_{H}-G(x)\leq 0$$.
\medskip

\emph{Step 3.} Combining the last two steps we obtain the desired subsolution property of $V$.

\bigskip
(\emph{Supersolution property.})
Let now $x\in H$, $\psi = \varphi-g$, with  $\varphi\in \mathcal{X}$ and $g\in\mathcal{R}^{0}$ be such that $0=V(x)-\psi(x)=\min(V-\psi)$. Assume, by contradiction that there exists $\eta>0$ such that 
\begin{equation}\label{C1}
\sup_{\theta\in\Theta} \left\{-\langle D\psi(x),\theta\rangle_H-1\right\}\leq -2\eta
\end{equation}
and 
\begin{equation}\label{C2}
\rho \psi (x)-\frac{1}{2}\mathrm{Tr}[\sigma\sigma^{*} D^{2}\psi(x)] -\langle x,\A D\varphi (x)\rangle_{H}-G(x)\leq -2\eta.
\end{equation}
By continuity, for a suitable $\varepsilon_{o}>0$, 
\begin{equation}\label{C3}
\sup_{\theta\in\Theta} \left\{-\langle D\psi(y),\theta\rangle_H-1\right\}\leq -\eta, \  \ \ \ \forall y\in B_{|\cdot|_{H}}(x,\varepsilon_{o}).
\end{equation}
and 
\begin{equation}\label{C4}
\rho \psi (y)-\frac{1}{2}\mathrm{Tr}[\sigma\sigma^{*} D^{2}\psi(y)] -\langle y,\A D\varphi (y)\rangle_{H}-G(y)\leq -\eta, \ \ \ \  \forall y\in B_{|\cdot|_{H}}(x,\varepsilon_{o}),
\end{equation}
where $B_{|\cdot|_{H}}(x,\varepsilon_{o}):=\{y \in H: |y-x|_H \leq \varepsilon_{o}\}.$

Let now $I=(\vartheta,\nu)\in\hat{\mathcal{I}}_0$ be  arbitrary but fixed, and set, for $\varepsilon\in (0,\varepsilon_{o})$,
$$\tau_{\varepsilon}^I:=\inf\{t\geq 0: \ X_t^{x,I} \notin B_{|\cdot|_{H}}(x,\varepsilon)\},$$
with the convention $\inf\emptyset=\infty$, which still provides a sense to the formulae below. In the following, we are going simply to write $\tau_{\varepsilon}$ instead of $\tau_{\varepsilon}^I$, unless it becomes important to stress the explicit dependence on $I$. 

Now pick $z\in \mathcal{D}(\A)$ such that $|z-x|<\varepsilon/2$, which is possible as $\mathcal{D}(\A)$ is dense in $H$.
Employing \eqref{C3}, \eqref{C4}, and Propositions \ref{Dyn1}-\ref{Dyn2} and Corollary \ref{Dyn3}, we find for each $T>0$
\begin{align*}
&    V(x)-\E[e^{-\rho{(\tau_{\varepsilon}\wedge T)}} V(X_{{\tau_{\varepsilon}\wedge T}}^{x,I})]\leq {\psi}(x)-\E[e^{-\rho {(\tau_{\varepsilon}\wedge T)}}{\psi}(X_{\tau_{\varepsilon}}^{x,I})]
\\  &= - \E\left[e^{-\rho {(\tau_{\varepsilon}\wedge T)}}\big|X_{{\tau_{\varepsilon}\wedge T}}^{x,I}-z\big|^{3}_{H}\right]+  \E\left[e^{-\rho {(\tau_{\varepsilon}\wedge T)}}\big|X_{{\tau_{\varepsilon}\wedge T}}^{x,I}-z\big|^{3}_{H}\right]+ {\psi}(x)-\E[e^{-\rho {(\tau_{\varepsilon}\wedge T)}}{\psi}(X_{\tau_{\varepsilon}}^{x,I})]\\
   &\leq -\frac{\varepsilon^{3}}{8}\E\left[e^{-\rho{(\tau_{\varepsilon}\wedge T})}\mathds{1}_{{{\{\tau_{\varepsilon}< T\}}}}\right]
   +\E\left[c_{o} (\varepsilon+\varepsilon^2)\int_{0^-}^{\tau}e^{-\rho t} (\d t+\d \nu_{t})\right]
\\
&+ \E\left[\int_0^{{\tau_{\varepsilon}\wedge T}}e^{-\rho t}\Big(\rho\psi(X_t^{x,I})- \frac{1}{2}\mathrm{Tr}[\sigma\sigma^{*}D^{2}\psi(X^{x,I}_{t})]-\langle X^{x,I}_{t}, \A D\varphi(X^{x,I}_{t})\rangle_{H}\Big)\d t\right] \\ 
& - \E\left[\int_{0^-}^{\tau_{\varepsilon}\wedge T}e^{-\rho t}\langle D\psi(X^{x,I}_{t}), \vartheta_{t}\rangle_{H}\d \nu_{t}\right]\\
&\leq  -\frac{\varepsilon^{3}}{8}\E\left[e^{-\rho{(\tau_{\varepsilon}\wedge T})}\mathds{1}_{{{\{\tau_{\varepsilon}< T\}}}}\right]+\E\left[c_{o} (\varepsilon+\varepsilon^2)\int_{0^-}^{\tau}e^{-\rho t} (\d t+\d \nu_{t})\right]\\
&
+  \E\left[\int_{0^{-}}^{{\tau_{\varepsilon}\wedge T}}e^{-\rho t}(G(X_t^{x,I})\d t+\d\nu_{t})\right]-\eta  \E\left[\int_{0^{-}}^{{\tau_{\varepsilon}\wedge T}}e^{-\rho t}(\d t+\d\nu_{t})\right].
\end{align*}
Letting $T\uparrow \infty$ in the above inequality and using the fact that $e^{-\rho\tau_{\varepsilon}}\mathds{1}_{\{\tau_{\varepsilon} = + \infty\}} = 0$ by \eqref{eq:convention}, we obtain
\begin{align}
\label{eq:superV}
&    V(x)-\E\left[\int_{0^{-}}^{\tau_{\varepsilon}}e^{-\rho t}(G(X_t^{x,I})\d t+\d\nu_{t})+e^{-\rho\tau_{\varepsilon}} V(X_{\tau_{\varepsilon}}^{x,I})\right] \\&
\leq - \frac{\varepsilon^{3}}{8}\E\left[e^{-\rho\tau_{\varepsilon}}\right] \nonumber 
 +\left(c_{o} (\varepsilon+\varepsilon^2)-\eta\right)  \E\left[\int_{0^{-}}^{\tau_{\varepsilon}}e^{-\rho t}(\d t+\d\nu_{t})\right],
\end{align}
The above inequality was obtained for a 
 fixed  $I\in \hat{\mathcal{I}}_{0}$.
 Now, stressing again the dependency of $\tau_{\varepsilon}$ with respect to $I\in \hat{\mathcal{I}}_{0}$, we take the supremum on both terms of \eqref{eq:superV} with respect to $I\in\hat{\mathcal{I}}_{0}$, we use \eqref{eq:DP}, and consider the identity
$$e^{-\rho\tau_{\varepsilon}^I}+\rho \int_0^{\tau_{\varepsilon}^I}e^{-\rho t}\d t= 1,$$
to obtain
\begin{align*}
&   0   \leq \sup_{I\in\hat{\mathcal{I}}_{0}}\left\{-\frac{\varepsilon^{3}}{8}\E\left[{1-  \rho} \int_0^{\tau_{\varepsilon}^I}e^{-\rho t}\d t\right]
+ \left(c_{o} (\varepsilon+\varepsilon^2)-\eta\right)\E\left[\int_{0^-}^{\tau_{\varepsilon}^I} e^{-\rho t}(\d t+ \d \nu_{t})\right]\right\};
\end{align*}
that is,
\begin{align*}
&   0   \leq - \frac{\varepsilon^{3}}{8}+\sup_{I\in\hat{\mathcal{I}}_0}
\E\left[ \left(c_{o} (\varepsilon+\varepsilon^2)+\frac{\rho\varepsilon^{3}}{8}-\eta\right) \int_{0^-}^{\tau_{\varepsilon}^I} e^{-\rho t}(\d t+ \d \nu_{t})\right],
\end{align*}
Taking $\varepsilon$ small enough with respect to $\eta$, which can be done without loss of generality,  the supremum is nonpositive. Hence, we reach a contradiction and the supersolution property of $V$ is proved.
\end{proof}
\color{black}
\begin{remark}
\label{rem:proofvisc}
It is worth emphasizing that the argument of the proof of the supersolution property can be clearly applied also in the finite-dimensional setting. As a consequence, this novel argument is able to reduce substantially the technicalities that are typically needed in $\mathbb{R}^n$, $n\geq1$, in order to show the supersolution (respectively, subsolution) property in minimization problems (respectively, maximization problems) involving singular controls (see, e.g., \cite{Ch}, \cite[Ch.\ VIII]{FlemingSoner}, \cite{HS}, and \cite{Ma}, among others).
\end{remark}

\begin{remark}\label{rem:uniqueness}
To the best of our knowledge, there are only two contributions in the literature in which the viscosity approach is developed for variational inequalities in infinite dimensional spaces: These are \cite{CPP} and \cite{GS}. In both of them, the variational inequality is related to an obstacle problem and thus involves a constraint on the solution itself, and not on its gradient (as, instead, it is in our case). 
\begin{enumerate}[(i)]
\item In \cite{CPP}, a problem of optimal stopping for a stochastic process with memory is considered. The  infinite-dimensional space in which the underlying state process takes values is that of continuous functions, and the evolution of the process is subject to the action of an unbounded operator (the first-order derivative). However, the Brownian noise is finite-dimensional in \cite{CPP}.
\item In  \cite{GS}, the aim is to price American options defined on forward-rates models. The interest rate process takes values in an Hilbert space and it evolves according to nonlinear dynamics, which however do not involve an unbounded operator. In this framework, the authors are able to prove a comparison result and to apply it in order to (uniquely) characterize the value function of the underlying infinite-dimensional optimal stopping problem. Remark \ref{rem:uniqueness2} below will articulate more on the relation between our paper and \cite{GS}.
\end{enumerate}
\end{remark}


\section{Selecting a specific direction of action: A related optimal stopping problem}
\label{sec:connection}

In order to achieve further regularity of $V$, we now specialize to the case in which the controller can only choose the intensity $\nu \in \mathcal{S}$ appearing in the decomposition $(\vartheta,\nu)\in \mathcal{I}_0$ of any admissible control $I \in \mathcal{I}$. 

In particular, we select the convex cone 
$$\Delta = \Span\{\hat{n}\},$$ 
where $\hat{n} \in H_+ \setminus \{\textbf{0}\}$ and it is such that, without loss of generality, $\langle q, \hat{n}\rangle_H =1$. Consequently (cf.\ \eqref{setTHETA}), $\Theta=\{\hat{n}\}$
and we make the following assumption, which will hold throughout the rest of the paper without further mention.

\begin{assumption}
 \label{ass:n}   
The vector $\hat{n} \in H_+ \setminus \{\textbf{0}\}$ is an eigenvector of $\A$ with eigenvalue $\lambda$\footnote{Clearly, with $\lambda\leq -\delta < 0$, where $\delta$ is  as in Assumption \ref{ass:A}.}.
\end{assumption}

\begin{remark}
\label{rem:eigenvector}
\begin{itemize}
\item[(i)] It is worth stressing that Assumption \ref{ass:n} does not imply that there exists a singularly controlled component of the state variable $X$ that is decoupled from the rest of the infinite-dimensional dynamics of $X$. To clarify this, suppose that $H=\mathbb{R}^2$ and consider
\begin{equation*}
\mathcal{A} = \left(
\begin{array}{cc}
1 & 0 \\
1 & 1
\end{array}
\right)
\quad \text{and} \quad 
\hat{n}=\left(
\begin{array}{c}
0 \\ 
1
\end{array}
\right).
\end{equation*}
\noindent Notice $\hat{n}$ is an eigenvector of $\mathcal{A}$. In this case, the two components of the state process $X$ are truly coupled through the operator $\mathcal{A}$ so that the action along the direction $\hat{n}$ affects indirectly also the first component of $X$.
\item[(ii)] Sufficient conditions guaranteeing the validity of Assumption \ref{ass:n} can be found in \cite{Arendt-etal, Clement-etal}. We refer to Section 4 in \cite{Calvia-etal} for a detailed discussion.
\end{itemize}
\end{remark}

\begin{remark}
\label{rem:1dSSC}
Within this section, the decision maker can act only on the intensity of the control, which is modeled as a one-dimensional singular control process belonging to the set $\mathcal{S}$. This assumption is made in order to develop regularity results for the value function $V$ by linking it to an optimal stopping problem. Indeed, the connection between singular stochastic control and optimal stopping is well understood for one-dimensional control processes of bounded variation, whereas it has not yet been established for multi-dimensional controls. Moreover, when the dimensionality of the problem is greater than one (but still finite) and the control is multi-dimensional, analyzing the geometry of the state space and constructing an optimal control rule of reflecting type already present significant challenges; see the introduction of \cite{DianettiFerrari} for a discussion of this point. These challenges become even more substantial in infinite-dimensional settings, where it is not even clear what precise meaning should be assigned to a “reflecting policy.” These highly intriguing aspects remain important open questions for future research.
\end{remark}

For future use, we recall here that the uncontrolled state-process uniquely solves in the mild sense (cf.\ \eqref{eq:state})
$$\d X^{x,0}_{t}=\mathcal{A}X^{x,0}_{t}\d t+\sigma \d W_{t},  \ \ \ X_{0}^{x,0}=x \in H;
$$
that is (cf.\ \eqref{eq:stochintegr}),
\begin{equation}
\label{eq:mild-uncontr}
X_{t}^{x,0}=e^{t\A}x + \int_{0}^t{e}^{(t-s)\A}\sigma \d W_{s} = e^{t\A}x + W^{\mathcal{A},\sigma}_t, \quad t\geq 0.
\end{equation}
Furthermore, $X^{x,0}$ has continuous sample paths.

For our subsequent analysis,  we define
\begin{equation}
\label{eq:hatG}
G_{\hat{n}}(x):=\langle DG(x), \hat{n} \rangle_H, \ \ \ x\in H,
\end{equation}
and introduce the optimal stopping problem
$$\sup_{\tau \in \mathcal{T}} \E\bigg[\int_0^{\tau} e^{-(\rho-\lambda)t} G_{\hat{n}}(X^{x,0}_t) \d t - e^{-(\rho-\lambda)\tau}\bigg], \quad x \in H,$$
where $\mathcal{T}$ denotes the set of the $\mathbb{F}$-stopping times.
The next result relates the previous optimal stopping problem to the directional derivative of $V$ as in \eqref{eq:newV}, in the direction $\hat{n}$. This can be thought of as an infinite-dimensional analogue of the result in \cite{BK}, where, in a finite-dimensional setting, it is proven that a suitable optimal timing problem provides the marginal value of an irreversible investment problem of monotone follower type.

\begin{theorem}
\label{thm:connection}
For any $x \in H$, one has
\begin{equation}
\label{eq:v}
 V_{\hat{n}}(x):=\langle DV(x), \hat{n} \rangle_H = \sup_{\tau \in \mathcal{T}} \E\bigg[\int_0^{\tau} e^{-(\rho-\lambda)t} G_{\hat{n}}(X^{x,0}_t) \d t - e^{-(\rho-\lambda)\tau}\bigg].    
\end{equation}
\end{theorem}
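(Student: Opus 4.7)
The plan is to establish the identity $V_{\hat{n}}(x) = \bar{u}(x)$, where $\bar{u}(x)$ denotes the right-hand side of \eqref{eq:v}, by a two-sided inequality argument. The essential ingredient is Assumption \ref{ass:n}: because $\hat{n}$ is an eigenvector of $\mathcal{A}$ with eigenvalue $\lambda$, the semigroup satisfies $e^{t\mathcal{A}}(y + c\hat{n}) = e^{t\mathcal{A}}y + c e^{\lambda t}\hat{n}$, so by linearity of the mild formula \eqref{mild} one has $X_t^{x+c\hat{n}, I} = X_t^{x, I} + c e^{\lambda t}\hat{n}$ for every admissible $I$. Thus a perturbation of the initial datum by $\pm\epsilon\hat{n}$ is equivalent to an exponentially decaying shift of the state trajectory, and any such shift can be reproduced or canceled by a catch-up impulse in direction $\hat{n}$ applied at a later time $\tau$, at a discounted cost of exactly $\epsilon e^{-(\rho-\lambda)\tau}$ (using $\langle q,\hat{n}\rangle_H = 1$).

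Let $\tau^* := \inf\{t \geq 0 : V_{\hat{n}}(X_t^{x, 0}) = -1\}$ be the first entry of the uncontrolled trajectory into the action region. For the inequality $V_{\hat{n}}(x) \leq \bar{u}(x)$, I would use that no-action is optimal on $[0, \tau^*]$ (a consequence of \eqref{VIOC} and the $C^{1,\mathrm{Lip}}$-regularity of $V$), so Proposition \ref{DPPOC} yields the equality
\[
V(x) = \E\!\left[\int_{0}^{\tau^*} e^{-\rho t}G(X_t^{x, 0})\,dt + e^{-\rho\tau^*}V(X_{\tau^*}^{x, 0})\right],
\]
while applying the same wait-until-$\tau^*$ policy from $x + \epsilon\hat{n}$ gives only the DPP upper bound with $X^{x+\epsilon\hat{n},0}$ in place of $X^{x, 0}$. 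Subtracting, expanding the $G$- and $V$-differences to first order in $\epsilon$ via the semiconcavity of $G$ (Assumption \ref{ass:C}) and of $V$ (Proposition \ref{prop:regularityV}(iv)), and using $V_{\hat{n}}(X_{\tau^*}^{x, 0}) = -1$ (continuity of $V_{\hat{n}}$ together with the definition of $\tau^*$), yields
\[
V(x+\epsilon\hat{n}) - V(x) \leq \epsilon\,\E\!\left[\int_{0}^{\tau^*} e^{-(\rho-\lambda)t}G_{\hat{n}}(X_t^{x, 0})\,dt - e^{-(\rho-\lambda)\tau^*}\right] + O(\epsilon^2) \leq \epsilon\,\bar{u}(x) + O(\epsilon^2).
\]
Dividing by $\epsilon$ and letting $\epsilon \downarrow 0^+$ closes this direction. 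For the reverse inequality $V_{\hat{n}}(x) \geq \bar{u}(x)$, I would argue symmetrically with the difference quotient $[V(x) - V(x - \epsilon\hat{n})]/\epsilon$, replacing semiconcavity by the convexity-based supporting-hyperplane bound $G(X_t^{x, 0}) - G(X_t^{x-\epsilon\hat{n}, 0}) \geq \epsilon e^{\lambda t}G_{\hat{n}}(X_t^{x-\epsilon\hat{n}, 0})$ and allowing $\tau \in \mathcal{T}$ with $\tau \leq \tau^*$ to vary freely. The suboptimal "wait until $\tau$, apply the catch-up impulse $\epsilon e^{\lambda \tau}\hat{n}$ at $\tau$, continue optimally from $X_\tau^{x, 0}$" strategy from state $x - \epsilon\hat{n}$, combined with the DPP equality for $V(x)$ (still valid on $[0,\tau]$ since $\tau \leq \tau^*$), yields after division by $\epsilon$ and passage to the limit the dual bound $V_{\hat{n}}(x) \geq \E[\int_0^\tau e^{-(\rho-\lambda)t}G_{\hat{n}}(X_t^{x, 0})\,dt - e^{-(\rho-\lambda)\tau}]$; a strong-Markov argument at $\tau^*$ then lifts the supremum over $\tau \leq \tau^*$ to the unrestricted one defining $\bar{u}(x)$.

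The step I expect to be most delicate is the DPP equality on $[0,\tau^*]$ together with the boundary identification $V_{\hat{n}}(X_{\tau^*}^{x, 0}) = -1$: intuitively these follow from the variational inequality \eqref{VIOC} and the $C^{1,\mathrm{Lip}}$ regularity of $V$, but their rigorous justification in the infinite-dimensional setting requires a careful verification-type argument resting on Theorem \ref{thm:Viscous} and the Dynkin formula of Proposition \ref{Dyn1} applied to test functions in the class $\mathcal{X}$ of \eqref{ClassX}. The strong-Markov extension from $\tau \leq \tau^*$ to arbitrary $\tau \in \mathcal{T}$ in the second direction is the second point of concern, as it amounts to ruling out that waiting past $\tau^*$ can strictly improve the stopping payoff --- a property that encodes the identification of the action region of $V$ with the stopping region of $\bar{u}$, and that likely requires a bootstrap using the already-established inequality $V_{\hat{n}} \leq \bar{u}$ applied at $X_{\tau^*}^{x, 0}$.
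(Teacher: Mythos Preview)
Your two-sided inequality strategy is natural, but it diverges from the paper's proof in a way that leaves two genuine gaps, both of which you flag but do not close.

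The paper's argument does \emph{not} go through the variational inequality or any identification of the action region. Instead, it first proves (Step~1) the existence of an optimal control $I^\star=(\hat n,\nu^\star)$ for $V(x)$, via a Banach--Saks/Koml\'os-type compactness argument on minimizing sequences in $L^p(\Omega\times[0,\infty))$. With $I^\star$ in hand, Step~2 follows Baldursson--Karatzas: for the lower bound one perturbs $I^\star$ by adding an impulse $\varepsilon e^{\lambda\tau}\hat n$ at an \emph{arbitrary} stopping time $\tau$, and for the upper bound one \emph{removes} mass from $I^\star$ until the time $\tau^\star:=\inf\{t:\nu^\star_t>0\}$. The relevant stopping time is thus defined through the optimal control, and no DPP \emph{equality} with the null control is ever invoked.

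Your route attempts to replace the existence of $I^\star$ by the DPP equality
\[
V(x)=\E\Big[\int_0^{\tau^*}e^{-\rho t}G(X_t^{x,0})\,\d t+e^{-\rho\tau^*}V(X_{\tau^*}^{x,0})\Big],
\]
with $\tau^*=\inf\{t:V_{\hat n}(X_t^{x,0})=-1\}$. This is precisely a verification statement (``no action is optimal in the continuation region''), and with the regularity available here it is out of reach: $V$ is only a $C^{1,\text{Lip}}$ viscosity solution of \eqref{VIOC}, and Proposition~\ref{Dyn1} applies to test functions $\varphi\in\mathcal{X}$ (with $D\varphi\in C(H;\mathcal{D}(\mathcal{A}))$, bounded $\mathcal{A}D\varphi$, etc.), not to $V$ itself. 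In the paper's logic the identification of action/continuation regions comes \emph{after} Theorem~\ref{thm:connection}, not before, so using it here is circular.

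The second gap is the bootstrap at the end. From $V_{\hat n}\le\bar u$ applied at $X_{\tau^*}^{x,0}$ you obtain $-1=V_{\hat n}(X_{\tau^*}^{x,0})\le\bar u(X_{\tau^*}^{x,0})$, which is the \emph{wrong} direction for showing that continuing past $\tau^*$ cannot improve the stopping payoff; what you need is $\bar u(X_{\tau^*}^{x,0})\le-1$. Hence the restriction $\tau\le\tau^*$ cannot be lifted in this way, and the inequality $V_{\hat n}(x)\ge\bar u(x)$ remains open. The paper sidesteps both issues entirely by working with the optimal control $I^\star$, whose existence is the real engine of the proof.
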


\begin{proof}
The proof is organized in two steps.
\vspace{0.25cm}

\emph{Step 1.} Let $x\in H$. Here we prove that there exists an optimal control $I^{\star}:=(\hat{n},{\nu}^{\star}) \in \mathcal{I}_0$ for $V(x)$ as in \eqref{eq:newV}. Furthermore, if $G$ is strictly convex, $I^{\star}$ is unique up to indistinguishability. 

Let $(I^k)_{k\in \mathbb{N}} :=(\hat{n}, \nu^k)_{k\in \mathbb{N}} \subseteq \mathcal{I}_0$ be a minimizing sequence for $V(x)$. Let us denote by $X^{x,k}:=X^{x,I^k}$, $k\in \mathbb{N}$. Given that $V(x) \leq \hat{c}_o(1+|x|^{2}_H)$, $x\in H$, (cf.\ Proposition \ref{prop:regularityV}), we have for some $\varepsilon>0$
\begin{equation}
\label{eq:exOCstima1}
\sup_{k\in\mathbb{N}}\E\bigg[\int_{0^-}^{\infty}e^{-\rho t} |X^{x,k}_t|_H^{2}\, \d t\bigg] \leq \frac{1}{\kappa_1}\Big(\hat{c}_o(1+|x|^{2}_H) + \varepsilon + \frac{\kappa_2}{\rho}\Big).
\end{equation}

Because
$$X^{x,k}_t = e^{t\A}x + W^{\A, \sigma}_t + \widehat{n}\int_{0^-}^t e^{\lambda(t-s)}\d \nu^k_t,$$
simple estimates using $\lambda<0$, \eqref{estimatesup-W}, and $|e^{t\mathcal{A}}|_{\mathcal{L}(H)} \leq e^{-\delta t}$, for all $t\geq0$, give, for a constant $C>0$ (changing from line to line),

\begin{equation}
    \label{eq:existence-stima1}
    |\widehat{n}|_H^2 e^{2\lambda t} \E\big[|\nu^k_t|^2\big] \leq C\Big(|x|_H^2 + \overline{c}_2 + \E\big[|X^{x,k}_t|^2\big]\Big).
\end{equation}
This in turn yields by \eqref{eq:exOCstima1}
\begin{equation}
    \label{eq:existence-stima2}
\sup_{k\in \mathbb{N}}\E\bigg[\int_0^{\infty }e^{-(\rho - 2\lambda)t} |\nu^k_t|^2 \d t\bigg] \leq  C\Big( 1+ |x|_H^2 +  \sup_{k\in \mathbb{N}} \E\bigg[\int_0^{\infty} e^{-\rho t}|X^{x,k}_t|^2 \d t\bigg]\Big) < \infty.
\end{equation}

Hence, by Banach-Saks theorem, there exist a subsequence of $(\nu^{k})_{k\in \mathbb{N}}$ - still denoted by $(\nu^{k})_{k\in \mathbb{N}}$ - and some ${\nu}^{\star} \in L^2(\Omega \times[0,\infty); \P \otimes e^{-(\rho-2\lambda)t} \d t)$ such that
\begin{equation}
\label{nuhat}
\widehat{\nu}^j:=\frac{1}{j + 1}\sum_{k=0}^j \nu^k \to {\nu}^{\star} \quad \text{in} \,\, L^2(\Omega \times[0,\infty); \P \otimes e^{-(\rho-2\lambda)t} \d t).
\end{equation}
Actually, up to passing to a further subsequence (again, relabeled in the following), such a convergence can be realized $\P \otimes e^{-(\rho-2\lambda)t} \d t$-a.e. Then, by arguing as in \cite[Lemmata 4.5-4.7]{KaratzasShreve84}, the process $\nu^{\star}$ admits a modification - still denoted by $\nu^{\star}$ - that is right-continuous, nondecreasing and $\mathbb{F}$-adapted, and thus belongs to $\mathcal{S}$.
Furthermore, given that ${\nu}^{\star} \in L^2(\Omega \times[0,\infty); \P \otimes e^{-(\rho-2\lambda)t} \d t)$ and that $t \mapsto \E[|\nu^{\star}_t|^2]$ is nondecreasing, it follows that the integrability condition required in \eqref{setI0} is also met (recall that, in this section, $\vartheta_t\equiv \widehat{n}$).
Set then $\widehat{I}^{j}:=(\hat{n},\widehat{\nu}^{j}) \in \mathcal{I}_0$, $j\in \mathbb{N}$, and  $I^{\star}:=(\hat{n},\nu^{\star}) \in \mathcal{I}_0$.

Notice now that by making use of an integration by parts in the integrals with respect to $\d \widehat{\nu}^j$, one also has that $\P \otimes e^{-\rho t}\d t$-a.e.\ (cf.\ \eqref{eq:stochintegr} and \eqref{mild})
$$X_{t}^{x,\widehat{I}^{j}}=e^{t\A}x+W_{t}^{\A,\sigma}+ \int_{0^-}^{t}e^{(t-s)\A} \hat{n}\, \d \widehat{\nu}^j_{s} = e^{t\A}x+W_{t}^{\A,\sigma}+ \hat{n} \int_{0^-}^{t} e^{\lambda(t-s)} \d \widehat{\nu}^j_{s} \to X_{t}^{x,I^\star},$$
and also $\P$-a.s.
$$\int_{0^-}^{\infty}e^{-\rho t} \d \widehat{\nu}^j_t \to \int_{0^-}^{\infty}e^{-\rho t} \d \nu^{\star}_t.$$

By convexity of $(x,\nu) \mapsto \mathcal{J}(x;I)$ (cf.\ \eqref{eq:costfunctbis}) the sequence $(\widehat{I}^j)_{j\in \mathbb{N}}:=(\hat{n},\widehat{\nu}^j)_{j\in \mathbb{N}}$ is minimizing as well, and we conclude by Fatou's lemma and the previous convergences that
$$V(x)=\liminf_{j\to\infty} \mathcal{J}(x;\widehat{I}^j) \geq \mathcal{J}(x;I^{\star}), \quad x \in H,$$
from which the optimality of $I^{\star}=(\hat{n},\nu^{\star}) \in \mathcal{I}_0$ for $V(x)$ follows.

Finally, the uniqueness claim follows from strict convexity of $G$, upon using arguments as those in the proof of Proposition 3.4 in \cite{FedericoPham}.
\vspace{0.25cm}

\emph{Step 2.} Here we prove \eqref{eq:v}. Since the proof very much follows the lines of those of Lemmata 3 and 4 in \cite{BK}, we only sketch it. 

Let $x \in H$, $I^{\star}=(\hat{n},\nu^{\star}) \in \mathcal{I}_0$ be optimal for $V(x)$ (cf.\ \emph{Step 1} above), and for $\tau \in \mathcal{T}$ and $\varepsilon>0$, define
\begin{equation}
\label{def:xi}
\xi_t:= \begin{cases} 
      I^{\star}_t, & 0 \leq t < \tau, \\
      I^{\star}_t + \varepsilon e^{\lambda \tau}\hat{n}, & t \geq \tau.
   \end{cases}
	\end{equation}
The process $\xi \in \mathcal{I}_0$ and it has direction of action $\hat{n}$ and intensity of action $\nu^{\xi}$ such that $\nu^{\xi}_{0^-}=0$ and $\d\nu^{\xi}_t = \d\nu^{\star}_t + \varepsilon e^{\lambda t} \delta(t-\tau)$, for any $t\geq0$.
Furthermore, letting $X^{x-\varepsilon \hat{n}, \xi}$ be the solution to \eqref{eq:state} started at time $0^-$ from level $x-\varepsilon \hat{n}$ and controlled by $\xi$, we have that: $X^{x-\varepsilon \hat{n}, \xi} \equiv X^{x-\varepsilon \hat{n}, I^{\star}}$ on $[0,\tau)$, while $X^{x-\varepsilon \hat{n}, \xi} \equiv X^{x, I^{\star}}$ on $[\tau,\infty)$.

Hence, by exploiting the convexity of $G$, Assumption \ref{ass:A2}, and the previous observations,
\begin{align}
\label{DV-ineq1}
& V(x) - V(x-\varepsilon \hat{n}) \geq \mathcal{J}(x;{I}^{\star}) - \mathcal{J}(x-\varepsilon \hat{n};\xi)\nonumber \\
& = \E\bigg[\int_0^{\tau}e^{-\rho t} \langle DG(X^{x-\varepsilon \hat{n}, I^{\star}}_t), X^{x, I^{\star}}_t - X^{x -\varepsilon \hat{n}, I^{\star}}_t \rangle_H \d t -\varepsilon e^{-(\rho-\lambda)\tau}\bigg] \nonumber \\
& = \varepsilon \E\bigg[\int_0^{\tau}e^{-(\rho-\lambda) t} \langle DG(X^{x-\varepsilon \hat{n}, I^{\star}}_t), \hat{n} \rangle_H \d t -e^{-(\rho-\lambda)\tau}\bigg] \\
& \geq \varepsilon \E\bigg[\int_0^{\tau}e^{-(\rho-\lambda) t} \langle DG(X^{x-\varepsilon \hat{n}, 0}_t), \hat{n} \rangle_H \d t -e^{-(\rho-\lambda)\tau}\bigg], \nonumber
\end{align}
from which
\begin{equation}
\label{DV-ineq2}
\liminf_{\varepsilon \to 0} \frac{1}{\varepsilon} \Big( V(x) - V(x-\varepsilon \hat{n}) \Big) \geq \sup_{\tau \in \mathcal{T}} \E\bigg[\int_0^{\tau} e^{-(\rho-\lambda)t} G_{\hat{n}}(X^{x,0}_t) \d t - e^{-(\rho-\lambda)\tau}\bigg].
\end{equation}

Under the usual convention $\inf\emptyset=+\infty$, let now $\tau^{\star}:=\inf\{t\geq 0:\, \nu^{\star}_t>0\}$ and $\tau^{\varepsilon}:=\inf\{t\geq 0:\, \nu^{\star}_t \geq \varepsilon e^{\lambda t}\}$, for $\varepsilon>0$. Clearly, $\tau^{\varepsilon} \downarrow \tau^{\star}$ as $\varepsilon \downarrow 0$.
We then define
\begin{equation}
\label{def:eta}
\eta_t:= \begin{cases} 
      0, & 0 \leq t < \tau^{\varepsilon}, \\
      I^{\star}_t - \varepsilon e^{\lambda \tau^{\varepsilon}}\hat{n}, & t \geq \tau^{\varepsilon},
   \end{cases}
	\end{equation}
and notice that denoting by $X^{x + \varepsilon \hat{n}, \eta}$ the solution to \eqref{eq:state} started at time $0^-$ from level $x + \varepsilon \hat{n}$ and controlled by $\eta$, we have that: $X^{x+\varepsilon \hat{n}, \eta} \equiv X^{x+\varepsilon \hat{n}, 0}$ on $[0,\tau^{\varepsilon})$, while $X^{x+\varepsilon \hat{n}, \eta} \equiv X^{x, I^{\star}}$ on $[\tau^{\varepsilon},\infty)$.

Convexity of $G$ then yields
\begin{align}
\label{DV-ineq3}
& V(x+\varepsilon \hat{n}) - V(x) \leq \mathcal{J}(x+\varepsilon \hat{n};\eta) - \mathcal{J}(x;I^{\star}) \\
& = \E\bigg[\int_0^{\tau^{\varepsilon}}e^{-\rho t} \langle DG(X^{x+\varepsilon \hat{n}, 0}_t), X^{x+\varepsilon \hat{n}, 0}_t - X^{x, I^{\star}}_t \rangle_H \d t - \int_{0^-}^{\tau^{\varepsilon}} e^{-\rho t} \d\nu^{\star}_t - \Big(\varepsilon e^{-\lambda\tau^{\varepsilon}} - \nu^{\star}_{\tau^{\varepsilon}}\Big) e^{-\rho\tau^{\varepsilon}}\bigg]. \nonumber
\end{align}
Notice now that
$$X^{x+\varepsilon \hat{n}, 0}_t - X^{x, I^{\star}}_t = \hat{n} e^{\lambda t} \Big(\varepsilon - \int_{0^-}^t e^{-\lambda s}\d \nu^{\star}_s \Big).$$
Plugging the latter relation into \eqref{DV-ineq3}, dividing by $\varepsilon$ and adding and substracting terms, one arrives at
\begin{align}
\label{DV-ineq4}
& \frac{1}{\varepsilon}\Big( V(x + \varepsilon \hat{n}) - V(x)\Big) \leq \E\bigg[\int_0^{\tau^{\star}}e^{-(\rho-\lambda) t} \langle DG(X^{x, 0}_t), \hat{n} \rangle_H \d t - e^{-(\rho-\lambda)\tau^{\star}}\Bigg] \nonumber \\
& + \E\bigg[\int_0^{\tau^{\star}}e^{-(\rho-\lambda) t} \langle DG(X^{x+\varepsilon \hat{n}, 0}_t) - DG(X^{x, 0}_t), \hat{n} \rangle_H \d t\bigg] + \E\Big[e^{-(\rho-\lambda)\tau^{\star}} - e^{-(\rho-\lambda)\tau^{\varepsilon}}\Bigg] \\
& + \E\bigg[\int_{\tau^{\star}}^{\tau^{\varepsilon}}e^{-(\rho-\lambda) t} \langle DG(X^{x+\varepsilon \hat{n}, 0}_t), \hat{n} \rangle_H \Big(1 - \frac{1}{\varepsilon} \int_{0^-}^t e^{-\lambda s} \d\nu^{\star}_s\Big) \d t\bigg]. \nonumber
\end{align}
Taking limits as $\varepsilon \downarrow 0$ it is not difficult to see that all the addends on the right-hand side of \eqref{DV-ineq4} but the first converge to zero. Hence,
\begin{align}
\label{DV-ineq5}
& \limsup_{\varepsilon \to 0} \frac{1}{\varepsilon} \Big(V(x+\varepsilon \hat{n}) - V(x) \Big) \leq \E\bigg[\int_0^{\tau^{\star}} e^{-(\rho-\lambda)t} G_{\hat{n}}(X^{x,0}_t) \d t - e^{-(\rho-\lambda)\tau^{\star}}\bigg] \nonumber \\
& \leq \sup_{\tau \in \mathcal{T}} \E\bigg[\int_0^{\tau} e^{-(\rho-\lambda)t} G_{\hat{n}}(X^{x,0}_t) \d t - e^{-(\rho-\lambda)\tau}\bigg].
\end{align}

Combining \eqref{DV-ineq2} and \eqref{DV-ineq5} and using the fact that $V\in C^{1,\text{Lip}}(H)$ (cf.\ Proposition \ref{prop:regularityV}(v)) we obtain \eqref{eq:v} and thus complete the proof.
\end{proof}

We assume the next condition on the directional derivative $G_{\hat{n}}$.
\begin{assumption}
\label{ass:psiweak}
$G_{\hat{n}}\in C^1(H)$ and $|DG_{\hat{n}}|_H \leq K_{G_{\hat{n}}}$.
\end{assumption}

For our subsequent analysis, it is convenient to make an integration by parts and exploit the strong Markov property to write
$$V_{\hat{n}}(x) = -1 - \Phi(x)+ \sup_{\tau \in \mathcal{T}}\E\Big[e^{-(\rho-\lambda)\tau} \Phi(X^{x,0}_{\tau})\Big], \quad x \in H,$$
with 
\begin{equation}
\label{eq:Phi}
\Phi(x):=-\E\bigg[\int_0^{\infty} e^{-(\rho-\lambda)t}\Big(G_{\hat{n}}(X^{x,0}_t)  + \rho - \lambda\Big) \d t\bigg], \quad x \in H.
\end{equation}
Under Assumption \ref{ass:psiweak}, using \eqref{eq:mild-uncontr} and that  $|e^{t\mathcal{A}}|_{\mathcal{L}(H)} \leq e^{-\delta t}$, one finds for any $x_1, x_2 \in H$ that (recall $\lambda \leq - \delta <0$)
\begin{eqnarray}
\label{eq:stimaLipPhi}
|\Phi(x_2) - \Phi(x_1)|_H & \leq & \E\bigg[\int_0^{\infty} e^{-(\rho-\lambda)t} \big|G_{\hat{n}}(X^{x_2,0}_t) -G_{\hat{n}}(X^{x_1,0}_t)\big|_H \d t\bigg] \nonumber \\
   & \leq &  \E\bigg[\int_0^{\infty} e^{-(\rho-\lambda)t} \big|e^{t\mathcal{A}}(x_2-x_1)\big|_H \d t\bigg] \leq \frac{K_{G_{\hat{n}}}}{\rho - \lambda + \delta} |x_2- x_1|_H.
\end{eqnarray}
Actually, given that $G_{\hat{n}} \in C^1(H)$, it can be easily shown by direct calculations that $\Phi \in C^1(H)$.

With regard to those properties of $\Phi$, in order to further investigate the $C^1$-regularity of $V_{\hat{n}}$ it then suffices to consider
\begin{equation}
\label{eq:def-u}
U(x):=V_{\hat{n}}(x) + 1 + \Phi(x) = \sup_{\tau \in \mathcal{T}}\E\Big[e^{-(\rho-\lambda)\tau} \Phi(X^{x,0}_{\tau})\Big], \quad x \in H.
\end{equation}

\begin{proposition}
\label{prop:Uprelim}
Let Assumption \ref{ass:psiweak} hold. Then, one has
$$
|U(x_2) - U(x_1)|_H\leq  \frac{K_{G_{\hat{n}}}}{\rho - \lambda + \delta} |x_2- x_1|_H, \quad x_1, x_2 \in H.$$ 
\end{proposition}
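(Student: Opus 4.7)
The plan is to exploit the representation $U(x)=\sup_{\tau\in\mathcal{T}}F(x,\tau)$ with $F(x,\tau):=\E[e^{-(\rho-\lambda)\tau}\Phi(X^{x,0}_\tau)]$, and invoke the standard observation that a supremum of uniformly Lipschitz functions is Lipschitz with the same constant. Concretely, for any fixed $\tau\in\mathcal{T}$ and $x_1,x_2\in H$ I would estimate $|F(x_2,\tau)-F(x_1,\tau)|$ by bringing the absolute value inside the expectation and then applying the Lipschitz property of $\Phi$ recorded in \eqref{eq:stimaLipPhi}:
\begin{equation*}
|F(x_2,\tau)-F(x_1,\tau)|\;\leq\;\E\!\left[e^{-(\rho-\lambda)\tau}\,\bigl|\Phi(X^{x_2,0}_\tau)-\Phi(X^{x_1,0}_\tau)\bigr|\right]\;\leq\;\frac{K_{G_{\hat{n}}}}{\rho-\lambda+\delta}\,\E\!\left[e^{-(\rho-\lambda)\tau}\bigl|X^{x_2,0}_\tau-X^{x_1,0}_\tau\bigr|_H\right].
\end{equation*}

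Next, I would use the fact that the noise term in $X^{x,0}$ does not depend on the initial datum, so by \eqref{eq:mild-uncontr} one has $X^{x_2,0}_\tau-X^{x_1,0}_\tau=e^{\tau\mathcal{A}}(x_2-x_1)$, whence the contraction estimate $|e^{t\mathcal{A}}|_{\mathcal{L}(H)}\leq e^{-\delta t}$ from Assumption \ref{ass:A} gives $|X^{x_2,0}_\tau-X^{x_1,0}_\tau|_H\leq e^{-\delta\tau}|x_2-x_1|_H$ pathwise. Plugging this back, one obtains the uniform-in-$\tau$ bound
\begin{equation*}
|F(x_2,\tau)-F(x_1,\tau)|\;\leq\;\frac{K_{G_{\hat{n}}}}{\rho-\lambda+\delta}\,\E\!\left[e^{-(\rho-\lambda+\delta)\tau}\right]|x_2-x_1|_H\;\leq\;\frac{K_{G_{\hat{n}}}}{\rho-\lambda+\delta}\,|x_2-x_1|_H,
\end{equation*}
where the last inequality uses $\rho-\lambda+\delta>0$ (so that the exponent is nonpositive).

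Finally, since $F(x_1,\tau)\leq U(x_1)$, the previous estimate yields $F(x_2,\tau)\leq U(x_1)+\frac{K_{G_{\hat{n}}}}{\rho-\lambda+\delta}|x_2-x_1|_H$; taking the supremum over $\tau\in\mathcal{T}$ gives $U(x_2)-U(x_1)\leq\frac{K_{G_{\hat{n}}}}{\rho-\lambda+\delta}|x_2-x_1|_H$, and exchanging the roles of $x_1,x_2$ produces the reverse inequality, which closes the proof. No real obstacle is expected: the argument is essentially a textbook value-function Lipschitz estimate, the only subtle point being to correctly exploit the dissipativity of $\mathcal{A}$ to absorb the potential growth coming from large $\tau$ (which here is immediate because $\lambda\leq-\delta$ ensures $\rho-\lambda+\delta>0$).
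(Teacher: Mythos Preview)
Your proposal is correct and follows essentially the same route as the paper: both use the Lipschitz estimate \eqref{eq:stimaLipPhi} on $\Phi$, the identity $X^{x_2,0}_\tau-X^{x_1,0}_\tau=e^{\tau\mathcal{A}}(x_2-x_1)$ from \eqref{eq:mild-uncontr}, and the contraction bound $|e^{t\mathcal{A}}|_{\mathcal{L}(H)}\leq e^{-\delta t}$, then pass to the supremum over $\tau$. The only cosmetic difference is that the paper compresses your ``sup of Lipschitz functions is Lipschitz'' argument into the single inequality $|U(x_2)-U(x_1)|\leq\sup_{\tau\in\mathcal{T}}\E[e^{-(\rho-\lambda)\tau}|\Phi(X^{x_2,0}_\tau)-\Phi(X^{x_1,0}_\tau)|]$.
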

\begin{proof}
For $x_1, x_2 \in H$, recalling \eqref{eq:mild-uncontr} and using \eqref{eq:stimaLipPhi} as well as  $|e^{t\mathcal{A}}|_{\mathcal{L}(H)} \leq e^{-\delta t}$ for any $t\geq0$, one has
\begin{eqnarray*}
|U(x_2) - U(x_1)| & \leq & \sup_{\tau \in \mathcal{T}}\E \Big[e^{-(\rho-\lambda)\tau}\big|\Phi(X^{x_2,0}_{\tau}) - \Phi(X^{x_1,0}_{\tau})\big|\Big] \nonumber \\
& \leq &  \frac{K_{G_{\hat{n}}}}{\rho - \lambda + \delta} \sup_{\tau \in \mathcal{T}}\E \Big[e^{-(\rho-\lambda)\tau} \big|e^{\tau\mathcal{A}}(x_2-x_1)\big|_H\Big] \leq \frac{K_{G_{\hat{n}}}}{\rho - \lambda + \delta} |x_2- x_1|_H.
\end{eqnarray*}
\end{proof}

By continuity of $U$, the stopping region $\{x\in H:\, U(x) =\Phi(x)\}$ is closed. Hence, by standard theory of optimal stopping (see, e.g., \cite[Ch.\ I, Sec.\ 2.2]{PeskirShir}, whose results hold for an underlying process taking values in a measurable space), one has $\P$-a.s.\ that
$$\tau^{\star}:=\inf\{t\geq 0: \ U(X^{x,0}_{t})=\Phi(X^{x,0}_{t})\}, \quad x \in H,$$
is optimal. 

\begin{proposition}[Dynamic Programming Principle for $U$]\label{DPPU}
For all stopping times $\theta\in \mathcal{T}$ we have
\begin{equation}\label{DPP}
U(x)=\sup_{\tau\in \mathcal{T}}\E\left[e^{-(\rho-\lambda)\tau} \mathds{1}_{{\tau<\theta}} \Phi(X^{x,0}_{\tau})+ e^{-(\rho-\lambda)\theta} \mathds{1}_{{\tau\geq \theta}} U(X^{x,0}_{\theta})\right],
 \ \ \ \ \forall x\in H.
\end{equation}
\end{proposition}
The proof of this result does not depend on the dimension of the underlying state space and proceeds along the same lines as in the finite-dimensional case.(see, e.g., \cite[Thm.\ 4.3]{Touzi} and references therein), upon exploiting the flow property of $X^{x,0}$ (see \eqref{eq-flow property} when $I\equiv 0$).

Based on the dynamic programming principle, one has that the differential problem associated to $U$ is the variational inequality
\begin{equation}
\label{VI}
\min\big\{((\rho-\lambda)-\mathcal{G})u, \ u-\Phi\big\}=0 \quad \text{on}\quad H,
\end{equation}
with $\mathcal{G}$ as in \eqref{eq:generator}, and for which, recalling the class \eqref{ClassX}, we now provide the definition of viscosity solution.

\begin{definition} 
\label{def:visc-OS}
\begin{enumerate}[(i)]
\item[]
\item We say that $u\in C(H)$ is a viscosity supersolution to \eqref{VI} at $x\in H$ if, for every $\varphi\in \mathcal{X}$ such that $0=u(x)-\varphi(x)=\min(u-\varphi)$, one has
$$\min\big\{((\rho-\lambda)-\mathcal{G})\varphi(x), \ (\varphi-\Phi)(x)\big\}\geq 0.$$
\item We say that $u\in C(H)$ is a viscosity subsolution to \eqref{VI} at $x\in H$ if, for every $\varphi\in \mathcal{X}$ such that $0=u(x)-\varphi(x)=\max(u-\varphi)$, one has
$$\min\big\{((\rho-\lambda)-\mathcal{G})\varphi(x), \ (\varphi-\Phi)(x)\big\}\leq 0.$$
\item We say that $u\in C(H)$ is a viscosity solution to \eqref{VI} at $x\in H$ if it is both a viscosity super- and subsolution.
\end{enumerate}
\end{definition}

\begin{remark}
\label{rem:OS-radial}
In this case, one could also include radial functions in the definition \ref{def:visc-OS} of viscosity solution of the considered variational inequality. We refrain from doing so for the sake of simplicity and because the goal of this section is to exploit the viscosity property -- specifically, the viscosity supersolution property -- using only test functions in $\mathcal{X}$ to establish a smoothness result, rather than to prepare a uniqueness result (see Remark \ref{rem:vis} and compare with Remark \ref{rem:on regularity of U}).
\end{remark}

\begin{lemma}
\label{lemma:visc} 
Let $x\in H$, $\varepsilon>0$, and let 
 $$\theta:=\inf\{t\geq 0: \ X^{x,0}_{t}\notin \mathcal{B}_{|\cdot|_H}(x,\varepsilon)\}\wedge 1,$$
with $\mathcal{B}_{|\cdot|_H}(x,\varepsilon):=\{y \in H:\, |y-x|_H  < \varepsilon\}.$
There exists $m_{o}>0$ such that
\begin{equation}\label{stun}
 \E\left[\int_{0}^{\theta \wedge\tau}e^{-(\rho-\lambda) t} \d t +\frac{e^{-(\rho-\lambda)\tau}}{\rho-\lambda}\mathds{1}_{{\tau<\theta}}\right]\geq m_{o},  \ \ \forall \tau\in\mathcal{T}.
\end{equation}
\end{lemma}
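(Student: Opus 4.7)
My plan is to compute the integral $\int_0^{\theta\wedge\tau}e^{-(\rho-\lambda)t}\,\d t$ explicitly and rearrange the expression in \eqref{stun} into a form where the dependence on $\tau$ becomes transparent. Writing $\alpha:=\rho-\lambda>0$ and distinguishing the events $\{\tau<\theta\}$ and $\{\tau\geq\theta\}$, a direct calculation yields
\begin{equation*}
\E\bigg[\int_0^{\theta\wedge\tau}e^{-\alpha t}\,\d t+\frac{e^{-\alpha\tau}}{\alpha}\mathds{1}_{\{\tau<\theta\}}\bigg]=\frac{1}{\alpha}\P(\tau<\theta)+\frac{1}{\alpha}\E\bigl[(1-e^{-\alpha\theta})\mathds{1}_{\{\tau\geq\theta\}}\bigr],
\end{equation*}
because on $\{\tau<\theta\}$ the sum collapses to $1/\alpha$, while on $\{\tau\geq\theta\}$ only the integral contributes and equals $(1-e^{-\alpha\theta})/\alpha$.

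Next I would isolate a probabilistic fact about $\theta$ that is independent of $\tau$: there exist $\eta\in(0,1]$ and $\eta_o\in(0,1]$ (both depending on $x$ and $\varepsilon$ only) such that $\P(\theta\geq\eta)\geq\eta_o$. This follows from the (strong) continuity of the trajectories of the uncontrolled process $X^{x,0}$: since $X^{x,0}_t=e^{t\A}x+W_t^{\A,\sigma}$ is continuous in $t$ with $X^{x,0}_0=x$ (strong continuity of the semigroup together with the continuity of the stochastic convolution recalled after Assumption \ref{ass:sigma}), we have $\sup_{t\in[0,\eta]}|X^{x,0}_t-x|_H\to 0$ almost surely as $\eta\downarrow 0$, whence $\P(\sup_{t\in[0,\eta]}|X^{x,0}_t-x|_H<\varepsilon)\to 1$ and the inclusion $\{\sup_{t\in[0,\eta]}|X^{x,0}_t-x|_H<\varepsilon\}\subseteq\{\theta\geq\eta\}$ gives the claim for $\eta$ small enough.

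Finally, I would perform a case split on the size of $\P(\tau<\theta)$. If $\P(\tau<\theta)\geq\eta_o/2$, the first term in the above identity already bounds the whole expression below by $\eta_o/(2\alpha)$. Otherwise $\P(\tau\geq\theta)\geq 1-\eta_o/2$, and combining with $\P(\theta\geq\eta)\geq\eta_o$ yields $\P(\{\tau\geq\theta\}\cap\{\theta\geq\eta\})\geq\eta_o/2$ by elementary inclusion--exclusion; on that event $1-e^{-\alpha\theta}\geq 1-e^{-\alpha\eta}$, so the second term bounds the expression below by $(1-e^{-\alpha\eta})\eta_o/(2\alpha)$. Taking
\begin{equation*}
m_o:=\frac{\eta_o(1-e^{-\alpha\eta})}{2\alpha}>0
\end{equation*}
covers both cases uniformly in $\tau\in\mathcal{T}$.

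I expect the only genuinely non-trivial step to be the lower bound $\P(\theta\geq\eta)\geq\eta_o$; the rest is algebraic manipulation. The subtlety there is that $\eta$ and $\eta_o$ may depend on $x$ (and $\varepsilon$), but this is harmless since the lemma is stated pointwise in $x$. The case split at the end is the natural way to handle the fact that a stopping time $\tau$ can put arbitrary mass either before or after $\theta$, and the key structural observation is that the two terms in the integrand compensate one another in exactly the right way so that a uniform lower bound persists.
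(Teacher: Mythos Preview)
Your proof is correct. Both you and the paper begin with the same algebraic simplification, reducing the expression in \eqref{stun} to $\frac{1}{\alpha}\E\big[\mathds{1}_{\{\tau<\theta\}}+(1-e^{-\alpha\theta})\mathds{1}_{\{\tau\geq\theta\}}\big]$, and both rely on the continuity of trajectories of $X^{x,0}$ to exploit that $\theta>0$ almost surely. The divergence is in the final step: the paper argues by contradiction, taking a hypothetical sequence $(\tau_n)$ driving the expression to zero, extracting a subsequence along which $\mathds{1}_{\{\tau_n\geq\theta\}}\to 1$ a.s., and then invoking dominated convergence to obtain the contradiction $\E[(1-e^{-\alpha\theta})/\alpha]>0$. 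You instead proceed constructively, first quantifying $\theta>0$ a.s.\ as $\P(\theta\geq\eta)\geq\eta_o$ and then performing a case split on $\P(\tau<\theta)$, with the inclusion--exclusion bound $\P(\{\tau\geq\theta\}\cap\{\theta\geq\eta\})\geq\eta_o/2$ handling the second case. Your route is slightly longer but yields an explicit value for $m_o$, whereas the paper's contradiction argument is shorter and avoids the quantitative estimate on $\theta$ at the cost of giving no information about the size of $m_o$.
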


\begin{proof}
First of all, notice that
\begin{align*}
\label{stun}
& \E\left[\int_{0}^{\theta\wedge\tau}e^{-(\rho-\lambda) t} \d t +\frac{e^{-(\rho-\lambda)\tau}}{\rho-\lambda}\mathds{1}_{{\tau<\theta}}\right]\\
& =\E\left[\left(\int_{0}^{\theta}e^{-(\rho-\lambda) t} \d t\right) \mathds{1}_{{\tau\geq \theta}} + \left(\int_{0}^{\tau}e^{-(\rho-\lambda) t} \d t \right)\mathds{1}_{{\tau< \theta}}+\frac{e^{-(\rho-\lambda)\tau}}{\rho-\lambda}\mathds{1}_{{\tau<\theta}}\right]\\
& =\E\left[\frac{1-e^{-(\rho-\lambda)\theta}}{\rho-\lambda}\mathds{1}_{{\tau\geq \theta}} + \frac{1}{\rho-\lambda}\mathds{1}_{{\tau<\theta}}\right].
\end{align*}
Then assume, by aiming for a contradiction, that there exists a sequence $(\tau_{n})\subseteq \mathcal{T}$ such that 
$$\E\left[\frac{1-e^{-(\rho-\lambda)\theta}}{\rho-\lambda}\mathds{1}_{{\tau_{n}\geq \theta}} + \frac{1}{\rho-\lambda}\mathds{1}_{{\tau_{n}<\theta}}\right]\to 0.$$
This means that 
\begin{equation}\label{conv}
\E\left[\frac{1-e^{-(\rho-\lambda)\theta}}{\rho-\lambda}\mathds{1}_{{\tau_{n}\geq \theta}}\right]\to 0 \ \ \ \mbox{and} \ \ \  \E\left[\mathds{1}_{{\tau_{n}<\theta}}\right]\to 0.
\end{equation}
The second convergence above yields, passing to a subsequence still labeled in the same way, 
$$
\lim_{n} \mathds{1}_{\tau_{n}\geq \theta}=1 \ \mbox{a.s.}.
$$
But then, the dominated convergence theorem gives
$$
\E\left[\frac{1-e^{-(\rho-\lambda)\theta}}{\rho-\lambda}\mathds{1}_{{\tau_{n}\geq \theta}}\right]\to \E\left[\frac{1-e^{-(\rho-\lambda)\theta}}{\rho-\lambda}\right]>0,
$$
where the strict inequality in the formula above is clearly due to continuity of trajectories of the process $X^{x,0}$. Hence, we have found a contradiction with the first convergence in \eqref{conv} and the proof is thus complete.
\end{proof}

\begin{proposition} 
\label{prop:Uvisc}
$U$ is a viscosity solution to \eqref{VI} at all $x\in H$.
\end{proposition}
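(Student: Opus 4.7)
The proof verifies the super- and subsolution properties separately, in both cases combining the dynamic programming principle \eqref{DPP} with a Dynkin-type formula in the spirit of Proposition~\ref{Dyn1}. That formula is invoked with the null control $I\equiv 0$ (so that \eqref{ass:int} is trivially satisfied) and with discount $\rho-\lambda>0$ in place of $\rho$; the proof of Proposition~\ref{Dyn1} is oblivious to the precise positive constant in the discount, so no adaptation is needed.

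For the supersolution property at $x\in H$, let $\varphi\in\mathcal{X}$ satisfy $0=U(x)-\varphi(x)=\min(U-\varphi)$. The obstacle bound $\varphi(x)\geq \Phi(x)$ is immediate, since $\tau\equiv 0$ is admissible in \eqref{eq:def-u} and hence $\varphi(x)=U(x)\geq \Phi(x)$. For the PDE bound, fix $\varepsilon>0$, set $\theta_\varepsilon:=\inf\{t\geq 0:\,X^{x,0}_t\notin B_{|\cdot|_H}(x,\varepsilon)\}\wedge 1$, and take $\tau=\infty$ in \eqref{DPP}. Using $U\geq\varphi$ globally, this gives $\varphi(x)\geq \E[e^{-(\rho-\lambda)\theta_\varepsilon}\varphi(X^{x,0}_{\theta_\varepsilon})]$, which by Dynkin's formula rearranges into
\[
0\geq \E\Big[\int_0^{\theta_\varepsilon} e^{-(\rho-\lambda)s}(\mathcal{G}-(\rho-\lambda))\varphi(X^{x,0}_s)\,ds\Big].
\]
Dividing by $\E[\theta_\varepsilon]>0$, letting $\varepsilon\to 0$ (so that $\theta_\varepsilon\to 0$ a.s.\ by path continuity of $X^{x,0}$) and applying a standard mean-value/dominated-convergence argument exploiting the bound $|\mathcal{G}\varphi(y)|\leq \widehat{C}(1+|y|_H)$ valid for $\varphi\in\mathcal{X}$, one obtains $((\rho-\lambda)-\mathcal{G})\varphi(x)\geq 0$.

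For the subsolution property, let $\varphi\in\mathcal{X}$ satisfy $0=U(x)-\varphi(x)=\max(U-\varphi)$ and assume, aiming at a contradiction, the existence of $\eta>0$ with $\min\{((\rho-\lambda)-\mathcal{G})\varphi(x),\,\varphi(x)-\Phi(x)\}>2\eta$. By continuity of $\mathcal{G}\varphi$, $\varphi$, and $\Phi$, there exists $\varepsilon>0$ such that both inequalities persist with right-hand side $\eta$ throughout $B_{|\cdot|_H}(x,\varepsilon)$. With $\theta:=\inf\{t\geq 0:\,X^{x,0}_t\notin B_{|\cdot|_H}(x,\varepsilon)\}\wedge 1$ and any $\tau\in\mathcal{T}$, one has $\Phi(X^{x,0}_\tau)\leq \varphi(X^{x,0}_\tau)-\eta$ on $\{\tau<\theta\}$ and $U(X^{x,0}_\theta)\leq \varphi(X^{x,0}_\theta)$ on $\{\tau\geq\theta\}$ (the latter from $U\leq\varphi$ globally). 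Inserting these two bounds into \eqref{DPP} and then applying Dynkin's formula at $\tau\wedge\theta$ together with $((\rho-\lambda)-\mathcal{G})\varphi\geq\eta$ on the ball yields
\[
U(x)\leq \varphi(x)-\eta\,\E\Big[\int_0^{\tau\wedge\theta} e^{-(\rho-\lambda)s}\,ds + e^{-(\rho-\lambda)\tau}\mathds{1}_{\{\tau<\theta\}}\Big].
\]
Since $e^{-(\rho-\lambda)\tau}\mathds{1}_{\{\tau<\theta\}}\geq \min(1,\rho-\lambda)\cdot \frac{e^{-(\rho-\lambda)\tau}}{\rho-\lambda}\mathds{1}_{\{\tau<\theta\}}$ and $\int_0^{\tau\wedge\theta}e^{-(\rho-\lambda)s}\,ds\geq \min(1,\rho-\lambda)\int_0^{\tau\wedge\theta}e^{-(\rho-\lambda)s}\,ds$, Lemma~\ref{lemma:visc} bounds the bracket from below by $\min(1,\rho-\lambda)\,m_o>0$ \emph{uniformly} in $\tau$. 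Taking the supremum over $\tau\in\mathcal{T}$ and recalling $\varphi(x)=U(x)$ produces $U(x)<U(x)$, the desired contradiction.

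The main obstacle is precisely this uniform-in-$\tau$ control in the subsolution step: the positive gap $\eta$ has to translate into a strict loss in the DPP over every admissible stopping time simultaneously. Lemma~\ref{lemma:visc}, which is tailored to provide exactly such a uniform lower bound, is the decisive ingredient, and its combination with Dynkin's formula evaluated at the random time $\tau\wedge\theta$ constitutes the technical core of the argument.
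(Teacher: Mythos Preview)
Your proof is correct and follows essentially the same approach as the paper: the DPP combined with Dynkin's formula for the supersolution, and a contradiction argument via Lemma~\ref{lemma:visc} for the subsolution. The only cosmetic differences are that the paper localizes the supersolution step with $\tau_R\wedge h$ and divides by the deterministic time $h$ rather than by $\E[\theta_\varepsilon]$, and in the subsolution step it normalizes the obstacle gap as $U(x)-\Phi(x)>\tfrac{2\eta}{\rho-\lambda}$ so that Lemma~\ref{lemma:visc} applies directly without your extra factor $\min(1,\rho-\lambda)$.
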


\begin{proof}
We follow the ideas of the proof in the finite-dimensional setting proposed by \cite[Thm.\ 5.2.1]{Pham}, but we simplify substantially the proof of the subsolution property thanks to Lemma \ref{lemma:visc}.
\smallskip

\emph{Supersolution property.} Recall \eqref{ClassX} and let $\varphi\in \mathcal{X}$ be such that 
$$0=U(x)-\varphi(x)=\min(U-\varphi).$$ Clearly, 
$$\varphi(x)=U(x)\geq \Phi(x).$$ Therefore, it remains to show that 
$$((\rho-\lambda)-\mathcal{G})\varphi(x)\geq 0.$$
 Setting
$$\tau_R:=\inf\{t\geq0:\, |X^{x,0}_t|_H \geq R\}$$ (with $\inf \emptyset = + \infty$) and letting $h>0$,  by {Proposition \ref{DPPU} with $\theta=\tau_{R}\wedge h$,} we have
$$U(x) \geq \E[e^{-(\rho-\lambda)(\tau_R\wedge h)} U(X^{x,0}_{\tau_R\wedge h})],$$ so that
$$
0\geq \E[e^{-(\rho-\lambda)(\tau_R\wedge h)} U(X^{x,0}_{\tau_R\wedge h})]-U(x)\geq  \E[e^{-(\rho-\lambda)(\tau_R\wedge h)} \varphi(X^{x,0}_{\tau_R\wedge h})]-\varphi(x) .
$$ 
On the other hand, Proposition \ref{Dyn1}, applied in the case of $I=\textbf{0}$ (the null control), yields 
$$\E[e^{-(\rho-\lambda) (\tau_R\wedge h)} \varphi(X^{x,0}_{\tau_R\wedge h})]= \varphi(x)+\E\left[\int_{0}^{\tau_R\wedge h}e^{-(\rho-\lambda) t}[(\mathcal{G}-(\rho-\lambda))\varphi](X^{x,0}_{t}) \d t \right].$$
Combining the last two display equations and dividing by $h$ we obtain
$$
\frac{1}{h}\E\left[\int_{0}^{\tau_R\wedge h}e^{-(\rho-\lambda) t}\Big[((\rho-\lambda)-\mathcal{G})\varphi\Big](X^{x,0}_{t}) \d t \right]\geq 0.
$$
We conclude by taking $h\to 0^{+}$ and applying the integral mean-value theorem and the dominated convergence theorem, since $\varphi \in \mathcal{X}$.
 
\smallskip

\emph{Subsolution property.} Let  $\varphi\in \mathcal{X}$ be such that $0=U(x)-\varphi(x)=\max(U-\varphi)$ and  assume, by contradiction, that there exists some $\eta>0$ such that  
 $$((\rho-\lambda)-\mathcal{G})\varphi(x)>2\eta \ \ \mbox{and} \  \  U(x)-\Phi(x)>\frac{2\eta}{\rho-\lambda }.$$
 By continuity, {there exists some $\varepsilon>0$ such that}
  $$((\rho-\lambda) -\mathcal{G})\varphi(y)>\eta \ \ \mbox{and} \  \  U(y)-\Phi(y)>\frac{\eta}{\rho-\lambda} \ \ \ {\forall y\in} \mathcal{B}_{|\cdot|_H}(x,\varepsilon),$$
where $\mathcal{B}_{|\cdot|_H}(x,\varepsilon):=\{y \in H:\, |y-x|_H<  \varepsilon\}.$
	
Let us define the stopping time $${\theta}:=\inf\{t\geq 0: \ X^{x,0}_{t}\notin \mathcal{B}_{|\cdot|_H}(x,\varepsilon)\}\wedge 1.$$ Then, using Dynkin's formula of Proposition \ref{Dyn1} (applied again in the case of $I$ being the null control) we obtain for each $\tau\in\mathcal{T}$
\begin{align*}
& \E\left[e^{-(\rho-\lambda)(\theta\wedge\tau)} U(X^{x,0}_{\theta\wedge\tau})\right]-U(x)\leq  \E\left[e^{-(\rho-\lambda)(\theta\wedge\tau)} \varphi(X^{x,0}_{\theta\wedge\tau})\right]-\varphi(x)\nonumber \\
& =\E\left[\int_{0}^{\theta\wedge\tau}e^{-(\rho-\lambda) t}\Big[\big(\mathcal{G}-(\rho-\lambda)\big)\varphi\Big](X^{x,0}_{t}) \d t \right],
\end{align*}
which in turn, thanks to Lemma \ref{lemma:visc}, implies
\begin{align*}\label{dsa}
 U(x)&\geq \E\left[\eta\int_{0}^{\theta\wedge\tau}e^{-(\rho-\lambda) t}\d t +e^{-(\rho-\lambda) (\theta\wedge\tau)} U(X^{x,0}_{\theta\wedge\tau})\right]\\&
 \geq \E\left[\eta\int_{0}^{\theta\wedge\tau}e^{-(\rho - \lambda)t}\d t +e^{-(\rho-\lambda) \tau} \mathds{1}_{{\tau<\theta}} \left(\frac{\eta}{\rho-\lambda}+\Phi(X^{x,0}_{\tau})\right)+ e^{-(\rho-\lambda)\theta} \mathds{1}_{{\tau\geq \theta}} U(X^{x,0}_{\theta})\right]\\
&=\eta \E\left[\int_{0}^{\theta\wedge\tau}e^{-(\rho-\lambda) t} \d t +	\frac{e^{-(\rho-\lambda) \tau}}{\rho-\lambda}\mathds{1}_{{\tau<\theta}} \right]+\E\left[e^{-(\rho-\lambda)\tau} \mathds{1}_{{\tau<\theta}} \Phi(X^{x,0}_{\tau})+ e^{-(\rho-\lambda) \theta} \mathds{1}_{{\tau\geq \theta}} U(X^{x,0}_{\theta})\right]\\
&\geq \eta m_{o}+\E\left[e^{-(\rho-\lambda) \tau} \mathds{1}_{{\tau<\theta}} \Phi(X^{x,0}_{\tau})+ e^{-(\rho-\lambda) \theta} \mathds{1}_{{\tau\geq \theta}} U(X^{x,0}_{\theta})\right].
\end{align*}
Taking the supremum over $\tau\in\mathcal{T}$ in the latter, we contradict  \eqref{DPP}, concluding the proof.
\end{proof}

\begin{remark}
\label{rem:uniqueness2}
Comments as those collected in Remark \ref{rem:uniqueness} apply to \eqref{VI}. The analogy of our setting with \cite{GS} is at this point even tighter, as now \eqref{VI} takes the form of an obstacle problem and thus involves a constraint on the solution itself (and not on its gradient as it was in the previous section). 

If one aims at a comparison theorem for \eqref{VI}, this might be proved by adapting the techniques of \cite{GS} in order to deal with the unbounded operator in the dynamics of the state process (not present in \cite{GS}). To that end, one should treat the unbounded term as in the regular control case by adding radial functions as test functions in the definition of viscosity solutions (see \cite[Ch.\,3]{FGS}). 

However, because our main aim is to provide regularity results for the optimal stopping problem under consideration, we refrain from studying the relevant question of uniqueness of the viscosity solution to \eqref{VI}, which is then left for future research.
\end{remark}

Thanks to Assumption \ref{ass:A}, we can introduce
\begin{equation}
\label{eq:norm-minus-1}
|x|_{-1}:= |\mathcal{A}^{-1}x|_{H},
\end{equation}

Then, in order to achieve the $C^1$-regularity of $U$, we strengthen the assumption on $G_{\hat{n}}$, by requiring the following, which will be standing in the rest of the section.
\begin{assumption}
\label{ass:psiweak-2}
There exists $\kappa_o>0$ such that $|G_{\hat{n}}(x)|\leq \kappa_o(1+|x|_{-1})$. Furthermore,
$G_{\hat{n}}$ is semiconcave with respect to the norm $|\cdot|_{-1}$; that is, there exists $\kappa_1>0$ such that
$$
\lambda G_{\hat{n}}(x)+(1-\lambda) G_{\hat{n}}(y) - G_{\hat{n}}(\lambda x+(1-\lambda) y)\leq \frac{\kappa_1}{2}\lambda(1-\lambda)|x-y|_{-1}^{2}, \ \ \forall x,y\in H, \ \lambda\in[0,1].
$$
\end{assumption}

\begin{remark}
\label{rem:G}
Assumptions \ref{ass:C}, \ref{ass:psiweak}, and \ref{ass:psiweak-2} are satisfied, e.g., if 
$$
{G}(x)=\frac{1}{2}(\langle x,h\rangle_{H})^2, \quad \text{or} \quad {G}(x)=\frac{1}{2}\langle Q x, x\rangle_{H}, \quad x \in H,
$$
with $h \in \mathcal{D}(\mathcal{A})$, and with $Q$ being positive semidefinite, symmetric and such that $Q\hat{n} \in \mathcal{D}(\mathcal{A})$.

Indeed, in these cases, 
$$
{G}_{\hat{n}}(x)= \langle x, h\rangle_{H} \langle h, \hat{n} \rangle_{H}, \quad \text{respectively} \quad {G}_{\hat{n}}(x)=\langle x, Q \hat{n} \rangle_{H}, \quad x \in H,
$$
so that ${G}_{\hat{n}}$ is clearly concave, belongs to $C^1(H)$ and it has sublinear growth with respect to $|\cdot|_H$. Moreover, in the first case, setting $k:=\mathcal{A}h$, one has 
\begin{align*}
|{G}_{\hat{n}}(x)|&\leq |h|_{H} \big|\langle x,h\rangle_{H}\big|=  |h|_{H} \big|\langle x,\mathcal{A}^{-1}k\rangle_{H}\big|=  |  h|_{H} \big|\langle x,\mathcal{A}^{-1} k\rangle_{H}\big|\\
&=  |h|_{H}\big|\langle \mathcal{A}^{-1}x,k\rangle_{H}\big|\leq  |h|_{H} |k|_{H}|x|_{-1}.
\end{align*}
Similarly, in the second case, setting $k:=\mathcal{A} Q\hat{n}$, one has 
\begin{align*}
|{G}_{\hat{n}}(x)|&\leq |\langle x, Q\hat{n}\rangle_{H}|= |\langle x,\mathcal{A}^{-1}k\rangle_{H}|= |\langle x,\mathcal{A}^{-1} k\rangle_{H}|\\
&= |\langle \mathcal{A}^{-1}x,k\rangle_{H}| \leq |k|_{H}|x|_{-1}.
\end{align*}
\end{remark}

Recall \eqref{eq:stochintegr} and \eqref{eq:norm-minus-1}. By \eqref{eq:mild-uncontr}, it holds 
$$|\mathcal{A}^{-1} X^{x,0}_s|_H \leq |e^{t\mathcal{A}}\mathcal{A}^{-1} x|_H + |\mathcal{A}^{-1} W^{\mathcal{A},\sigma}_s|_H.$$ 
Then, the contraction property of the semigroup $e^{t\mathcal{A}}$, estimate \eqref{estimatesup-W}, and the fact that the norm $|\cdot|_{-1}$ is clearly dominated by the norm $|\cdot|_H$ give
\begin{equation}
\label{estimatesup-1}
\E\left[\sup_{s\geq 0} |X^{x,0}_{s}|_{-1}\right]\leq \bar{\kappa} (1+|x|_{-1}), \ \ \ \ \forall x\in H, 
\end{equation}
for some $\bar{\kappa}>0$.

One then has the following preliminary result.

\begin{proposition}
\label{prop:U-1}
 $U$ is semiconvex with respect to the $|\cdot|_{-1}$ norm and there exists $\widehat{\kappa}_o>0$ such that
\begin{equation}
\label{eq:stimecreascitaU}
|U(x)|\leq \widehat{\kappa}_o(1+|x|_{-1}).
\end{equation} 
Furthermore, $U$ is $|\cdot|_{-1}$-locally Lipschitz continuous.
\end{proposition}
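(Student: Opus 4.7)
The plan is to establish the three claims in sequence, leveraging two structural facts: the commutation of $\mathcal{A}^{-1}$ with $(e^{t\mathcal{A}})_{t\ge 0}$ (a consequence of self-adjointness) and the affinity of the map $x\mapsto X^{x,0}_\cdot$. These let us transfer all $|\cdot|_{-1}$-estimates from $G_{\hat n}$ to $\Phi$ and then, through the supremum, to $U$.

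\textbf{Growth of $U$.} Starting from \eqref{eq:Phi}, the sublinear bound $|G_{\hat n}(x)|\le \kappa_o(1+|x|_{-1})$ in Assumption \ref{ass:psiweak-2} combined with \eqref{estimatesup-1} yields $|\Phi(x)|\le C(1+|x|_{-1})$ for some $C>0$. Plugging this into the definition \eqref{eq:def-u} of $U$ and reapplying \eqref{estimatesup-1} to the supremum over $\tau$ delivers \eqref{eq:stimecreascitaU}.

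\textbf{Semiconvexity with respect to $|\cdot|_{-1}$.} The central technical point is that $\mathcal{A}^{-1}$ and $e^{t\mathcal{A}}$ commute, whence
\[
|e^{t\mathcal{A}}y|_{-1}=|e^{t\mathcal{A}}\mathcal{A}^{-1}y|_{H}\le e^{-\delta t}|y|_{-1},\qquad y\in H.
\]
Together with the affine identity $X^{\lambda x_1+(1-\lambda)x_2,0}_{t}=\lambda X^{x_1,0}_{t}+(1-\lambda)X^{x_2,0}_{t}$ and the $|\cdot|_{-1}$-semiconcavity of $G_{\hat n}$, I would propagate semiconcavity through the discounted time integral in \eqref{eq:Phi} to conclude that $\Phi$ is $|\cdot|_{-1}$-semiconvex with some constant $c_{\Phi}>0$. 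To pass the property through the supremum over $\tau$ in \eqref{eq:def-u}, I rely on the standard reformulation: $f$ is $|\cdot|_{-1}$-semiconvex with constant $c$ if and only if $f+\tfrac{c}{2}|\cdot|_{-1}^{2}$ is convex (note that $|x|_{-1}^{2}=\langle \mathcal{A}^{-2}x,x\rangle_{H}$ is a genuine convex quadratic form on $H$ since $\mathcal{A}^{-2}$ is positive self-adjoint). For each fixed $\tau$, the map $x\mapsto \mathbb{E}[e^{-(\rho-\lambda)\tau}\Phi(X^{x,0}_\tau)]$ is $|\cdot|_{-1}$-semiconvex with a constant uniform in $\tau$, and suprema of convex functions being convex, $U+\tfrac{c_U}{2}|\cdot|_{-1}^{2}$ is convex, i.e.\ $U$ is $|\cdot|_{-1}$-semiconvex.

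\textbf{Local Lipschitz continuity.} By the previous step, $U+\tfrac{c_U}{2}|\cdot|_{-1}^{2}$ is convex on $H$ and, by the growth bound, bounded on $|\cdot|_{-1}$-bounded sets. The classical fact that a convex function bounded on a neighborhood is locally Lipschitz (see \cite[Cor.\,2.4, p.\,12]{EkelandTemam}, already invoked in Proposition \ref{prop:regularityV}(iii)) applies to the normed space $(H,|\cdot|_{-1})$ and yields $|\cdot|_{-1}$-local Lipschitz continuity; subtracting the locally Lipschitz term $\tfrac{c_U}{2}|\cdot|_{-1}^{2}$ preserves the property, and the claim follows.

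The main obstacle is ensuring that contractivity and semiconcavity estimates, originally formulated with respect to $|\cdot|_{H}$, transfer correctly to the weaker $|\cdot|_{-1}$-norm. This hinges precisely on the commutation $\mathcal{A}^{-1}e^{t\mathcal{A}}=e^{t\mathcal{A}}\mathcal{A}^{-1}$ afforded by self-adjointness, which allows the dissipativity rate $e^{-\delta t}$ to be inherited by $|\cdot|_{-1}$. A secondary subtlety is that $(H,|\cdot|_{-1})$ need not be complete, so one should ensure that the convex-analytic Lipschitz result invoked depends only on the normed-space (not Banach-space) structure, which indeed is the case.
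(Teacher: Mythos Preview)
Your proof is correct and follows essentially the same route as the paper's own argument: transfer the $|\cdot|_{-1}$-semiconcavity of $G_{\hat n}$ to $|\cdot|_{-1}$-semiconvexity of $\Phi$ via the integral in \eqref{eq:Phi}, then pass the property through the supremum over $\tau$, and finally deduce local Lipschitz continuity from the convex decomposition and \cite[Cor.\,2.4]{EkelandTemam}. You are in fact more explicit than the paper on two points that the authors leave implicit: (i) the commutation $\mathcal{A}^{-1}e^{t\mathcal{A}}=e^{t\mathcal{A}}\mathcal{A}^{-1}$ needed to get the contraction $|e^{t\mathcal{A}}y|_{-1}\le e^{-\delta t}|y|_{-1}$, and (ii) the reformulation of semiconvexity as convexity of $f+\tfrac{c}{2}|\cdot|_{-1}^{2}$ to justify stability under suprema (the paper simply cites \cite[Prop.\ 2.1.5]{CannarsaSinestrari}). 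Your remark that the relevant convex-analytic Lipschitz result only requires a normed-space structure is also well taken.
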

\begin{proof}
Using \eqref{eq:mild-uncontr} and that  $|e^{t\mathcal{A}}|_{\mathcal{L}(H)} \leq e^{-\delta t}$, one easily finds from \eqref{eq:Phi} that the semiconcavity with respect to the $|\cdot|_{-1}$ norm of ${G}_{\hat{n}}$ implies semiconvexity of $\Phi$ with respect to the $|\cdot|_{-1}$ norm as well. Hence, given that $x \mapsto X^{x,0}_{\cdot}$ is linear, we find that $U$ is semiconvex with respect to the $|\cdot|_{-1}$ norm being supremum of semiconvex functions (see \cite[Prop.\ 2.1.5]{CannarsaSinestrari}, whose proof does not suffer the dimensionality of the state space).

By \eqref{eq:Phi}, one also has that the growth condition on ${G}_{\hat{n}}$, together with \eqref{estimatesup-1}, imply that $|\Phi(x)|\leq K_{\Phi}(1+|x|_{-1})$, for a suitable $K_{\Phi}>0$. The latter, together with \eqref{eq:stimecreascitaU}, in turn gives for $\widehat{\kappa}_o>0$
$$|U(x)| \leq \sup_{\tau \in \mathcal{T}}\E\big[|\Phi(X^{x,0}_{\tau})|\big] \leq K_{\Phi}\big( 1 + \E\big[ \sup_{s\geq 0}|X^{x,0}_{s}|_{-1}\big]\big) \leq \widehat{\kappa}_o(1+|x|_{-1}).$$

Finally, the last claim is due to \cite[Cor.\,2.4, p.\,12]{EkelandTemam} and to the fact that, being $U$ semiconvex with respect to $|\cdot|_{-1}$ norm,  one can write
$$U(x) = U_0(x) - \frac{\widehat{\kappa}_1}{2}|x|^2_{-1}, \quad x \in H,$$
for some $\widehat{\kappa}_1>0$ and $U_0$ convex. 
 \end{proof}

In the rest of the paper, we are going to study the regularity properties of the (sub)gradient of $U$,  that we denote by $D^{-}U$.

\begin{proposition}
\label{prop:gradient}
$
D^- U(x)\subseteq \mathcal{D}(\mathcal{A})$
 at all $x\in H$.
 \end{proposition}

\begin{proof}
Recall that, by semiconvexity, we can write
$$U(x) = U_0(x) - \frac{\widehat{\kappa}_1}{2}|x|^2_{-1}, \quad x \in H,$$
for some $\widehat{\kappa}_1>0$ and $U_0$ convex, so that, for any $x\in H$,
$$D^-U(x) = D^-U_0(x) - \widehat{\kappa}_1 {\left(\mathcal{A}^{-1}\right)^{2}} x.$$
Hence, without loss of generality, up to replace $U$ by $U_0$, we  work in the rest of this proof under the assumption that $U$ is convex. 
%

Let $x\in H$, $(h_{n})\subseteq  \mathcal{D}(\A)$ be such that $h_{n}\to 0$ with respect to $|\cdot|_{H}$, and set $\hat h_{n}:=\A h_{n}$, so that also $|\hat h_{n}|_{-1}\to 0$. For $p\in D^-U(x)$, by convexity of $U$ we have
$$
U(x+\hat h_{n})-U(x)\geq \langle p,\hat h_{n}\rangle_{H},
$$
which rewrites as
\begin{equation}
\label{Uconvex1}
U(x+\hat h_{n})-U(x)\geq \langle p,\A  h_{n}\rangle_{H}.
\end{equation}
Since $U$ is $|\cdot|_{-1}$-continuous by Proposition \ref{prop:U-1}, the linear functional 
$$
T_{x}: \mathcal{D}(\A)\to \R, \ \ \ \ T_{x}(h)=  \langle p,\A  h\rangle_{H}
$$
is $|\cdot|_{H}$-upper semicontinuous by \eqref{Uconvex1}; that is, $\limsup_{n\rightarrow \infty} T_{x}(h_n)\leq 0$. Taking now the sequence $(-h_{n})\subseteq  \mathcal{D}(\A)$ and exploiting the linearity of $T_{x}$, it also holds that $T_{x}$ is $|\cdot|_{H}$-lower semicontinuous. Hence, it is $|\cdot|_{H}$-continuous, which means that $p\in\mathcal{D}(\A)$. 
%
\end{proof}

\begin{proposition}
\label{prop:regularity}
Let $x\in H$, $\hat h\in K$, and set $ h:=\sigma\hat h$. Then, $U$ is differentiable at $x$ along the direction $h$.
\end{proposition}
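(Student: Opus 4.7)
The plan is to combine the optimal stopping representation of $U$ in \eqref{eq:def-u} with the $|\cdot|_{-1}$-semiconvexity of $U$ from Proposition \ref{prop:U-1}, and to exploit the structural assumption $h=\sigma\hat h\in \mathcal D(\mathcal A)$ to close the gap between the one-sided directional derivatives. Semiconvexity applied along the affine line $\{x+th\}_{t\in\R}$ (noting $|th|_{-1}=|t||h|_{-1}$) immediately yields existence of the one-sided directional derivatives $U'_\pm(x;h)$ with $U'_+(x;h)\geq U'_-(x;h)$; the task reduces to proving the reverse inequality.

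By standard optimal stopping theory, there exists an optimal stopping time $\tau^\star$ for $U(x)$, namely the first entry of $X^{x,0}$ into the closed stopping region $\{y\in H:\,U(y)=\Phi(y)\}$. Using $\tau^\star$ as a \emph{sub-optimal} rule at the perturbed initial datum $x+\varepsilon h$ gives
\begin{equation*}
U(x+\varepsilon h)-U(x)\ \geq\ \E\Big[e^{-(\rho-\lambda)\tau^\star}\big(\Phi(X^{x,0}_{\tau^\star}+\varepsilon e^{\tau^\star\mathcal A}h)-\Phi(X^{x,0}_{\tau^\star})\big)\Big];
\end{equation*}
dividing by $\varepsilon>0$ and passing to the limit by dominated convergence (valid since $\Phi\in C^1(H)$ with $|D\Phi|_H$ bounded under Assumption \ref{ass:psiweak}) produces
\begin{equation*}
U'_+(x;h)\ \geq\ \E\Big[e^{-(\rho-\lambda)\tau^\star}\langle D\Phi(X^{x,0}_{\tau^\star}),\,e^{\tau^\star\mathcal A}h\rangle_H\Big].
\end{equation*}
The symmetric computation, now exploiting $\tau^\star$ at $x-\varepsilon h$, yields the mirror bound $U'_-(x;h)\leq \E[e^{-(\rho-\lambda)\tau^\star}\langle D\Phi(X^{x,0}_{\tau^\star}),e^{\tau^\star\mathcal A}h\rangle_H]$.

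To upgrade this to the desired equality $U'_+(x;h)=U'_-(x;h)$, I would next take $\tau^\star_\varepsilon$ (almost) optimal at $x+\varepsilon h$, use it as a sub-optimal rule at $x$, and pass to the limit $\varepsilon\downarrow 0$. This step is the main obstacle: it requires a stability result of the form $\tau^\star_\varepsilon\to\tau^\star$ in an appropriate sense (e.g. in probability, together with convergence of the associated payoff expectations). This is exactly where the structural hypothesis $h=\sigma\hat h\in\mathcal D(\mathcal A)$ becomes crucial: the non-degeneracy of the noise along $h$ prevents the free boundary $\{U=\Phi\}$ from exhibiting ``sticky'' behaviour that would obstruct continuity of the optimal stopping times, while the regularity $h\in\mathcal D(\mathcal A)$ guarantees that the perturbation $e^{\tau\mathcal A}h$ stays smooth enough to interact well with the viscosity PDE. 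Technically, the closing step would combine the viscosity solution property of $U$ (Proposition \ref{prop:Uvisc}), the upper hemicontinuity of $D^-U$ into $\mathcal D(\mathcal A)$ (Proposition \ref{prop:gradient}(ii)), and the continuity of $\Phi$ and $D\Phi$, to ensure the relevant expectations converge. Once this is achieved the two-sided bounds collapse to the same expectation and the claimed Gâteaux differentiability of $U$ at $x$ in direction $h$ follows.
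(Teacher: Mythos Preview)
Your approach has a genuine gap at exactly the point you flag as ``the main obstacle'': the stability of the (almost) optimal stopping times $\tau^\star_\varepsilon\to\tau^\star$ is not proved, and it is not a routine matter. In infinite dimensions there is no general result guaranteeing continuity of first hitting times of the stopping region under perturbations of the initial datum, and the heuristic that non-degeneracy of the noise in the direction $h$ ``prevents sticky behaviour of the free boundary'' would itself require a regularity theory for the free boundary that is nowhere available in this setting. Without this step, the chain of inequalities you derive gives only $U'_+(x;h)\geq A$ and $U'_-(x;h)\leq A$ for the same expectation $A$, which together with the semiconvexity inequality $U'_+(x;h)\geq U'_-(x;h)$ does not close the gap. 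The vague invocation of Propositions \ref{prop:Uvisc} and \ref{prop:gradient}(ii) does not repair this: upper hemicontinuity of $D^-U$ concerns behaviour of subgradients, not of hitting times.

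The paper's argument is completely different and avoids optimal stopping times altogether. It argues by contradiction: if $U$ fails to be differentiable along $h$, then (after the semiconvexity correction) the convex function $U_0$ has two distinct subgradients $p,\bar p\in D^-U_0(x)\subseteq\mathcal D(\mathcal A)$ with $\langle p,h\rangle_H<\langle\bar p,h\rangle_H$. One then constructs explicit test functions
\[
\widehat\varphi_n(y)=\tfrac12\langle p+\bar p,\,y-x\rangle_H+\tfrac{n}{2}\langle y-x,h\rangle_H^2
\]
touching $U_0$ from below at $x$. The crucial observation is that because $h=\sigma\hat h$, the second-order trace satisfies $\tfrac12\mathrm{Tr}[\sigma\sigma^*D^2\widehat\varphi_n(x)]\geq n$, while the first-order terms stay bounded (using $p,\bar p\in\mathcal D(\mathcal A)$). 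Plugging these test functions into the viscosity \emph{supersolution} inequality for $U$ then yields $0\leq -n+\text{bounded}$, a contradiction as $n\to\infty$. This is the precise and only place where the hypothesis $h\in R(\sigma)\cap\mathcal D(\mathcal A)$ is used; no continuity of stopping times is needed.
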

\begin{proof} We assume, without loss of generality, that $U(x)=0$ (indeed, this can be always achieved by a translation) and we notice that we can write
\begin{equation}\label{2024-07-24:00}
U(y) = U_0(y) - \frac{\widehat{\kappa}_1}{2}|y-x|^2_{-1}, \quad y \in H,
\end{equation}
for some $\widehat{\kappa}_1\geq 0$ and $U_0$ convex. 

We now argue by contradiction and assume that $U$, hence $U_0$, is not differentiable along the direction $h:=\sigma\hat h$, with $\hat h\in K$. By convexity of $U_0$, there exist $p_{1},{p}_{2}\in D^{-}U_0(x)$ such that
\begin{equation}\label{xxc}
\langle p_{1},h\rangle_{H}\neq \langle {p}_{2}, h\rangle_{H}.
\end{equation}
By \eqref{xxc} and since $h=\sigma\hat h$, we have 
\begin{equation}\label{xxcc}
 \sigma^* (p_1-p_2)\neq 0.
 \end{equation}
Again using  convexity of $U_{0}$,  
we get 
\begin{equation}\label{2024-07-24:06}
U_0(y)\geq \langle p_{1},y-x\rangle_{H}\vee \langle  p_{2},y-x\rangle_{H}, \ \ \ \forall y\in H.
\end{equation}
Note that, by Proposition \ref{prop:gradient}, we have $p_{1},{p}_{2}\in  \mathcal{D}(\mathcal{A})$.
For  $n\in \mathbb{N}$, let $\rho_n\colon \mathbb{R}\rightarrow \mathbb{R}$ be a function with the following properties:
\begin{subequations}
  \begin{align}
    \label{2024-07-28:01}
    &\rho_n(0)=0\\
    \label{2024-07-28:02}
    &\rho_n(t)=\rho_n(-t)\qquad\qquad  \forall t\in \mathbb{R}\\
    \label{2024-07-28:03}
    &  0\leq \rho_n(t)\leq |t|\qquad\qquad \forall t\in \mathbb{R}\\
    \label{2024-07-28:04}
    &   \rho_n\in C^2(\mathbb{R})\\
    \label{2024-07-28:05}
    &   \rho_n'(0)=0\\
    \label{2024-07-28:06}
    &     \rho_n''(0)= n
  \end{align}
\end{subequations}
and define $\hat\varphi_n\colon H\rightarrow\mathbb{R}$,
\begin{equation*}
  \hat\varphi_n(y)\coloneqq \rho_n
  \left(
    \langle p_1,y-x\rangle_{H}
    \vee
    \langle p_2,y-x\rangle_{H}
    -\frac{1}{2}
    \langle p_1+p_2,y-x\rangle_{H}
  \right)
  +
  \frac{1}{2}
  \langle p_1+p_2,y-x\rangle_{H}\qquad \forall y\in H.
\end{equation*}
Noticing that
\begin{equation}\label{aaaz}
  \langle p_1,y-x\rangle_{H}
  \vee
  \langle p_2,y-x\rangle_{H}
  -\frac{1}{2}
  \langle p_1+p_2,y\rangle_{H}
  =\frac{1}{2}
  \left| \langle p_1-p_2,y\rangle_H\right|,
\end{equation}
we have
\begin{equation*}
  \begin{split}
    \hat\varphi_n(y)
    =&
    \rho_n \left(\frac{1}{2}
     \left| \langle p_1-p_2,y-x\rangle_{H}\right|
   \right)   +  \frac{1}{2} \langle p_1+p_2,y-x\rangle_{H}\\
   =&\mbox{(by \eqref{2024-07-28:02})}\\
   =&
   \rho_n\left(\frac{1}{2}
       \langle p_1-p_2,y-x\rangle_{H}
   \right)   +  \frac{1}{2} \langle p_1+p_2,y-x\rangle_{H}\qquad \forall y\in H.
  \end{split}
\end{equation*}
The last expression tells us that
$\hat\varphi_n\in C^2(H)$, and
$$
D\hat\varphi_n(y)=\frac{1}{2}\left[\rho_{n}'\left(\frac{1}{2}
       \langle p_1-p_2,y-x\rangle_{H}\right)+1\right] (p_1+p_2).
$$
Since $p_1,p_{2}\in \mathcal{D}(\A)$, this
 shows that $\hat\varphi_{n}\in \mathcal{X}$. 
Moreover, 
\begin{subequations}
  \begin{equation}
    \label{2024-07-28:08}
    D\hat\varphi_n(x)=\frac{1}{2}(p_1+p_2) \in\mathcal{D}(\mathcal{A}),
  \end{equation}
  \begin{equation}
    \label{2024-07-28:09}
    \langle D^2\hat\varphi_n(x)y,z
    \rangle_{H}
    =\frac{1}{4}\rho''_n(0)
    \langle
    p_1-p_2,y
    \rangle_{H}   \langle
    p_1-p_2,z
    \rangle_{H}\qquad \forall y,z\in H.
  \end{equation}
\end{subequations}
Furthermore, for all $y\in H$,
\begin{equation}\label{2024-07-28:07}
  \begin{split}
      \hat\varphi_n(y)-\frac{\widehat{\kappa}_{1}}{2}|y-x|_{-1}^2\ \leq\ & \mbox{(by \eqref{2024-07-28:03} \, \mbox{and}  \, \eqref{aaaz})}
  \ \leq \ 
  \langle p_1,y-x\rangle \vee \langle p_2,y-x\rangle_{H}-\frac{\widehat{\kappa}_{1}}{2}|y-x|_{-1}^2\\
  \ \leq\ &
  \mbox{(by \eqref{2024-07-24:06})}
  \ \leq \ U_0(y)-\frac{\widehat \kappa_{1}}{2}|y-x|_{-1}^2\\
  \ =\  &  \mbox{(by \eqref{2024-07-24:00})}
  \ = \ U(y) .
\end{split}
\end{equation}
Define
\begin{equation*}
  \varphi_n(y)\coloneqq \hat\varphi_n(y)-\frac{\widehat{\kappa}_{1}}{2}|y-x|_{-1}^2 \qquad \forall y\in H.
\end{equation*}
Then $\varphi_{n}\in\mathcal{X}$ and, by \eqref{2024-07-28:08} and \eqref{2024-07-28:09}, we have
\begin{subequations}
  \begin{equation}
    \label{2024-07-28:10}
    D\varphi_n(x)=\frac{1}{2}(p_1+p_2)- \widehat{\kappa}_{1}\left(\A^{-1}\right)^{2}(y-x). 
  \end{equation}
\end{subequations}
Also, by 
\eqref{2024-07-28:09} and recalling \eqref{2024-07-28:06},
\begin{equation}\label{2024-07-28:13}
  \begin{split}
    \mbox{Tr}
\left[  \sigma\sigma^*D^2\varphi_n(x)\right]
=&
\frac{1}{4}n
  |
  \sigma^* (p_1-p_2)|_{H}^2+
\widehat{\kappa}_1\mbox{Tr}\left[\sigma\sigma^{*}\A^{-1}\right].
\end{split}
\end{equation}

\noindent By \eqref{2024-07-28:07}, $U-\varphi_n$ attains a minimum at $0$.
Because $U$ is a viscosity supersolution at $x$ of \eqref{VI} (cf.\ Proposition \ref{prop:Uvisc}) and recalling \eqref{2024-07-28:13} and \eqref{xxcc}, one finally has
\begin{align*}
0&\leq \big[\big((\rho-\lambda)-\mathcal{G}\big)\varphi_{n}\big](x)=- \frac{1}{2}\mbox{Tr}\left[\sigma\sigma^{*}D^{2}\varphi_{n}(x)\right]- \langle x,\mathcal{A} D\varphi_{n}(x)\rangle_{H}\\
&\leq - \frac{1}{4}n
  |
  \sigma^* (p_1-p_2)|_{H}^2 + \frac{1}{2}\widehat \kappa_1\mbox{Tr}\left[\sigma\sigma^{*}\A^{-1}\right] + \frac{1}{2}\langle \mathcal{A}(p_1+p_2), \,x\rangle_{H}\to-\infty \quad \text{as}\,\,n\to \infty, 
\end{align*}
which reaches the desired contradiction and thus completes the proof.
\end{proof}
\color{black}

\begin{remark}
\label{rem:on regularity of U}
It is worth noticing that the proof of Proposition \ref{prop:regularity} only exploits the viscosity supersolution property of $U$, which is actually the easiest part to be shown in the proof of Proposition \ref{prop:Uvisc}.
\end{remark}


\begin{proposition}
\label{prop:improved}
Let  $\mathrm{R}(\sigma)$ denote the range of $\sigma$. If 
$\overline{\mathrm{R}(\sigma)}=H,$ then $U$ is (Fr\'echet) differentiable  at all  $x\in H$ and  $DU(x)\in \mathcal{D}(\mathcal{A})$. Moreover, $DU\in C(H;H)$.
\end{proposition}

\begin{proof}
By following the arguments developed at the beginning of Proposition \ref{prop:gradient}, we can here assume that $U$ is convex.
Then, due to Proposition \ref{prop:regularity} and convexity of $U$, we know that, for each {$h\in\mathrm{R}(\sigma)$}, the set $\{\langle h, p\rangle_{H}:\, p\in D^{-}U(x)\}$ is a singleton. Since $\overline{\mbox{R}(\sigma)}=H$, this implies that   $D^{-}U(x)$ must be a singleton too.  Again by convexity of $U$, it follows that $U$ is differentiable at $x\in H$. Furthermore, using Proposition \ref{prop:gradient}, we conclude that $DU(x)\in \mathcal{D}(\mathcal{A})$. 

{Finally,  the last assertion follows from the Corollary at p.\,20 in \cite{Phelps}.}
%
%
%
%
\end{proof}
  \color{black}


\section{Second-order smooth-fit for $V$}
\label{sec:mainres}

Thanks to Proposition \ref{prop:improved} and Theorem \ref{thm:connection} we finally achieve the second-order smooth-fit of $V$ in the direction of $\hat{n}$. More precisely, we have the following corollary.

\begin{corollary}
\label{cor:2ndsmooth}
Under the assumptions of Proposition \ref{prop:improved}, $V\in C^{1,\text{Lip}}(H)$ and $V_{\hat{n}} \in C^1(H)$.
\end{corollary}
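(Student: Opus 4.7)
The first assertion $V\in C^{1,\mathrm{Lip}}(H)$ is nothing but Proposition \ref{prop:regularityV}(v), which has already been established in complete generality (no nondegeneracy on $\sigma$ is required there). Hence the actual content of the corollary is the improved smoothness $V_{\hat n}\in C^1(H)$, and my plan is to reduce this to the decomposition already set up in \eqref{eq:def-u}.

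Concretely, I would start from the identity
\begin{equation*}
V_{\hat n}(x)\;=\;U(x)\;-\;1\;-\;\Phi(x),\qquad x\in H,
\end{equation*}
which is exactly the rearrangement of \eqref{eq:def-u} and is legitimate because Theorem \ref{thm:connection} identifies $V_{\hat n}$ with the optimal stopping value, and the integration-by-parts preceding \eqref{eq:Phi} rewrites it in terms of $\Phi$ and $U$. So the task splits into showing $\Phi\in C^1(H)$ and $U\in C^1(H)$.

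For $\Phi$, the argument is the one sketched in the paragraph after \eqref{eq:stimaLipPhi}: Assumption \ref{ass:psiweak} gives $G_{\hat n}\in C^1(H)$ with bounded gradient, and by \eqref{eq:mild-uncontr} the map $x\mapsto X^{x,0}_t=e^{t\mathcal{A}}x+W^{\mathcal{A},\sigma}_t$ is affine with linear part $e^{t\mathcal{A}}$; dominated convergence (using $|e^{t\mathcal{A}}|_{\mathcal{L}(H)}\leq e^{-\delta t}$ and $|DG_{\hat n}|_H\leq K_{G_{\hat n}}$) then allows differentiation under the expectation, yielding
\begin{equation*}
D\Phi(x)\;=\;-\,\mathbb{E}\Bigl[\int_0^\infty e^{-(\rho-\lambda)t}\,(e^{t\mathcal{A}})^{\ast}\,DG_{\hat n}(X^{x,0}_t)\,\mathrm{d}t\Bigr],
\end{equation*}
and continuity of $D\Phi$ in $x$ follows by another dominated-convergence argument.

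For $U$, the key input is Proposition \ref{prop:improved}: under the standing nondegeneracy hypothesis $\overline{\mathrm{R}(\sigma)\cap\mathcal{D}(\mathcal{A})}=H$, it asserts that $U$ is Fr\'echet differentiable at every $x\in H$, that $DU(x)\in\mathcal{D}(\mathcal{A})$, and that $DU\in C(H;\mathcal{D}(\mathcal{A}))$ with $\mathcal{D}(\mathcal{A})$ endowed with its graph norm. Since the graph norm dominates $|\cdot|_H$, this implies a fortiori $DU\in C(H;H)$, i.e.\ $U\in C^1(H)$. Combining the two pieces, $V_{\hat n}=U-1-\Phi\in C^1(H)$, which completes the corollary. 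There is no genuine obstacle at this stage: all the substantive work --- the identification of $V_{\hat n}$ with an optimal stopping value (Theorem \ref{thm:connection}), the $|\cdot|_{-1}$-semiconvexity and subdifferential analysis of $U$ (Propositions \ref{prop:U-1}--\ref{prop:gradient}), and the directional differentiability obtained by comparing $U$ against suitable quadratic test functions via the supersolution property (Propositions \ref{prop:regularity}--\ref{prop:improved}) --- has already been carried out; the corollary is the bookkeeping step recording what has been established.
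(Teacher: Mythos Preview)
Your proposal is correct and follows exactly the route the paper intends: the corollary is stated without proof because it is an immediate consequence of Proposition \ref{prop:regularityV}(v) for the first claim and, for the second, of the decomposition $V_{\hat n}=U-1-\Phi$ in \eqref{eq:def-u} together with $\Phi\in C^1(H)$ (noted right after \eqref{eq:stimaLipPhi}) and $U\in C^1(H)$ from Proposition \ref{prop:improved}. Your write-up simply spells out this bookkeeping step in more detail than the paper does.
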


The second-order regularity of the value function has been a fundamental aspect of singular stochastic control theory since its beginning. The underlying idea is that such smoothness of the value function lays the groundwork for characterizing the problem's free boundary and, consequently, for constructing an optimal control of reflection type.

In one-dimensional or two-dimensional fully degenerate stationary settings, where a guess-and-verify approach can be employed, imposing a suitable second-order regularity on the solution of the variational inequality enables the unique determination of certain otherwise free parameters. This, in turn, allows for the identification of the value function of the problem and the free boundary at which the state process should be optimally reflected (see \cite{Al00,FK21,JS,MZ07}, among many others).

The validation of a second-order smooth-fit property in multiple dimensions still requires verification on a case-by-case basis. This is well described in the introduction of the seminal work by S.E.\ Shreve and H.M.\ Soner \cite{SoSh2}: \emph{"An important question is whether the principle of smooth fit can be expected to apply to multidimensional singular control problems, or is it strictly a one-dimensional phenomenon. Karatzas and Shreve \cite{KS86} suggested that it might apply in higher dimensions. [...] Our discovery of a $C^2$-value function provides strong support for the belief in a widely applicable principle of smooth fit. Nevertheless, the argument of this paper depends heavily on the fact that only two dimensions are involved [...], and we have not found a way to obtain a similar result in higher dimensions."}

In suitable multiple-dimensional frameworks, second-order regularity of the derivative of the value function of the singular stochastic control problem along the direction of the control process has been obtained more recently through its relation to optimal stopping (see, e.g., \cite{ChHaus,DianettiFerrari,Ferrari18}). Our result is thus situated within this body of literature and actually provides, for the first time, the validation of a second-order smooth-fit property in an infinite-dimensional setting.

\newpage

\section{Two applications in Economics}
\label{sec:applications}

Here we discuss two economic models that can embedded into our setting.

\subsection{An irreversible investment problem into energy capacity}
\label{ex:1}

 Let $\mathcal{O}$ be an open, simply connected, and bounded subset of $\mathbb{R}^n$, $n<4$, with smooth boundary. We endow it with the Lebesgue measure $\mu$ on the Borel $\sigma$-algebra of $\mathcal{O}$, and consider the Hilbert space $H=L^2(\mathcal{O};\mu)$.
 
 Within this mathematical framework, we consider a  company which  has to deliver  energy   to  the locations of $\mathcal{O}$. 
We assume that, in absence of any investment by the company,  the energy supply at time $t$ and location $\xi$ -- denoted by  $E^0(t,\xi)$ --  evolves   according to the parabolic PDE 
\begin{equation}\label{PDEneumann}
\begin{cases*}
\displaystyle{\frac{\partial E^0}{\partial t}(t,\xi) = \Delta E^0(t,\xi) - \delta E^0(t,\xi), \ \ \ \ (t,\xi)\in\mathbb{R}_+\times \mathcal{O},} \\
E^0_{}(0,\xi)=e(\xi), \quad \xi \in \mathcal{O},\\
\displaystyle{\frac{\partial E^0}{\partial \bf n} (t,\xi)=0, \quad  (t,\xi)\in \mathbb{R}_+\times \partial \mathcal{O},}
\end{cases*}
\end{equation}
where $\Delta$ is the Laplacian operator over $\mathcal{O}$, $\delta>0$ is a depreciation factor,  $e(\xi)$ is the initial value of the energy supply at location $\xi$, and $\bf n$ denotes the unitary outer normal vector at the boundary of $\mathcal{O}$; the Neumann boundary condition  at $\partial \mathcal{O}$ models the fact that we assume there is no flux of energy at the boundary.
Under suitable conditions on the domain, 
the above PDE is well posed as an abstract evolution equation in $H$. Precisely (see, e.g., \cite[Sec.3.1]{Lunardi}), defining the operator
$$
\mathcal{L}: \mathcal{D}(\mathcal{L}) \subseteq H\to H,
$$
where 
$$
\mathcal{D}(\mathcal{L})=W^{2,2}_0(\mathcal{O}):=\bigg\{f\in W^{2,2}(\mathcal{O}): \ \frac{\partial f}{\partial \bf n}=0  \ \mbox{on} \ \mathcal{O}\bigg\},  \ \ \ \mathcal{L}f:=\Delta f-\delta f,
$$
one has that $\mathcal{L}$ generates a strongly continuous semigroup of linear operators in $H$ and \eqref{PDEneumann} can be rewritten 
in abstract terms as
$$
\d E^0_t = \mathcal{L}E^0_t \d t,
 \ \ \ E_{0}=e.
$$
Assume now that the company can implement irreversible 
 investment strategies $I_t(\xi)$ in order to adjust the production capacity. 
 The energy supply over the region, i.e.\  the spatial process $E_t^I(\xi)$, then evolves  according to the controlled abstract evolution equation 
$$
\d E^I_t = \mathcal{L}E^I_t \d t + \d I_t, \ \ \ \ 
E^I_{0^-}=e. 
$$
Next, 
we model the total demand of energy as a spatial process $A_t(\xi)$ evolving according to the SPDE
$$ \d A_t(\xi) =  \mathcal{B} A_t(\xi)  \d t - \sigma \d W_t(\xi), \quad A_{0}(\xi)=a(\xi),$$
for some initial maximal demand $a \in H_+$, some bounded nonnegative self-adjoint linear operator $\mathcal B \in \mathcal{L}(H)$ such that $\mathcal{L}-\mathcal{B}$ satisfies Assumption \ref{ass:A2}, and for $\sigma$ and $W$ satisfying the requirements of Section \ref{sec:setting}. 
The set of admissible investment strategies is therefore naturally modeled by the class $\mathcal I$ (cf.\ \eqref{set:S}).
Moreover, we model 
the inverse demand function for energy assuming that  it depends on the location $\xi \in \mathcal{O}$ and is linear in the quantity $E^I_t(\xi)$ that is being delivered; that is,
$$ p_t(\xi, E^I_t(\xi))= A_t(\xi) - B(\xi) E^I_t(\xi),$$ for some function $B(\xi) >0$.
 Here, $A_t(\xi)/B(\xi)$ is  the maximal possible demand at location $\xi$ at time $t$. We set $B \equiv 1$ in the following (just for simplicity). Then, 
 the 
total surplus at a given time $t$ and location $\xi$ is given by
 $$U_t(\xi)= \int_0^{E_t(\xi)} p_t(\xi, z) \d z = A_t(\xi)  E_t(\xi)- \frac{1}{2}\left(E_t(\xi)\right)^2,$$
so that the overall total surplus at time $t$ is
$$ S_t=\int_D U_t(\xi) \mu(\d\xi)= \langle A_t , E^I_t\rangle_{H} - \frac{1}{2} \langle E^I_t , E^I_t\rangle_{H}
= - \frac{1}{2} \langle E^I_t - A_t, E_t- A_t\rangle_{H} + \frac{1}{2} \langle A_t, A_t \rangle_H.$$ 
As the last term is independent of the control $I$, it is irrelevant in the optimization process we are going to define and we  ignore it in the following.

Set now $X^I_t := E^I_t - A_t$ and  $\mathcal{A}:=\mathcal{L}  - \mathcal{B}$. Then we have
\begin{equation}
\label{eq:Xexample1}
\d X^I_t = \mathcal{A} X^I_t \d t + \sigma \d W_t + \d I_t, \quad X^I_{0^-}=e-a =:x,
\end{equation}
The latter controlled SPDE satisfies our Assumptions \ref{ass:A} and \ref{ass:A2}. 

For an intertemporal discount factor $\rho>0$, the energy producer aims at maximizing the total expected surplus, net of the investment costs. Assuming that the cost of investment may vary with location $\xi$ and that it is modeled by a local price $q\in H_+$, bounded away from zero -- that is, there exists $q_o>0$ such that $q (\xi)\geq q_o$ for every $\xi\in \mathcal{O}$ -- the company's optimization problem is
$$\sup_{I \in \mathcal{I}}\E\bigg[\int_0^{\infty} e^{-\rho t} \Big(-\frac{1}{2}|X_t|^2_H - \langle q, \d I_t\rangle_H\Big)\bigg].$$
The latter is clearly equivalent to minimizing the cost functional \eqref{eq:costfunct} with $G(x)= |x |^2_{H}$.

\subsection{An energy balance climate model with human impact}
\label{ex:2}
 Let us consider a one-dimensional Energy Balance Climate Model with Human Impact (see, e.g., \cite{Brocketal}; the basic climate model dates back to Gerald North, see \cite{North}).  

We first describe quickly the basics of an energy balance climate model. The earth's temperature is taken to be the result of incoming radiation from the sun and outgoing radiation through reflection. 
We consider temperature on the  hemisphere,  modeled by the half-circle that we identify with the interval $D:=[-1,1]$, where      $\xi\in[-1,1]$ is the sine of latitude. $\mathcal{M}$ is the Borel $\sigma$-algebra on $D$, and $\mu$ is taken to be the Lebesgue measure on $(D, \mathcal{M})$.
 
We now model the temperature evolution $T_t(\xi)$ over time $t$ at location $\xi \in D$.     
    The incoming radiation at $\xi$ is denoted by $R(\xi)=Q S(\xi) \alpha(\xi)$. Here, $S(\xi)$ is the solar energy arriving at latitude $\xi$, $Q$ is the solar  constant divided by $4$, and $\alpha(\xi)$ describes the amount of heat absorbed at location $\xi$ (co-albedo); in general, it depends on temperature and location, but here as in \cite{North} we assume that it is just a function of $\xi \in D$. 
    The outgoing infrared radiation at location $\xi$ is linear in temperature, say $\gamma + \eta T_t(\xi)$, for two constants $\gamma$ and $\eta>0$. On the surface, we have a typical heat transport that is modeled via the  second derivative $\frac{\d^2}{\d \xi^2}$, or more generally, by  the operator 
$$(\mathcal{B} f) (\xi) := \frac{\d}{\d \xi} \left (D(\xi) \frac{\d}{\d \xi} f(\xi)\right),$$ for some diffusion coefficient $D$ driving the heat transport.  
The overall resulting energy-balance operator  
    \begin{equation}
    \label{EqnClimate}
        \big(\mathcal{Q} f\big)(\xi) := Q S(\xi)  \alpha(\xi)  - \gamma - \eta f(\xi)  + \big(\mathcal{B}f\big)(\xi)
    \end{equation}
    describes the energy balance of the earth without human impact.
    The equilibrium temperature distribution $T^{\star}(\xi)$ is given by the solution to the partial differential equation $\mathcal{Q} T^{\star}=0$, subject to appropriate boundary conditions (for instance, both zero Neumann or periodic boundary conditions can be chosen).
    
    We now add human impact due to carbon emissions.
   Let global cumulative human carbon emissions be described by the (real-valued) process $\nu \in \mathcal{S}$ (cf.\ \eqref{set:S-new}).

   The temperature evolution at time $t$ is then described by  
   \begin{equation}
       \label{EqnBalance}
       \d T_t    =  \mathcal{Q} T_t   \d t + \sigma \d W_t   +   \mathbf{1}\d\nu_t 
    , \quad T_{0^-}=x \in H,
   \end{equation} 
    with $\mathbf{1}$ being the unitary vecor in $H$, and with $\sigma$ and $W$ as specified in Section \ref{sec:setting}. In particular, the Brownian motion $W$ takes care of noise and unmodeled influences. 
    
    The dynamics (\ref{EqnBalance}) does not fit exactly into our setting because of the constant (in time)  drift term 
    $$b(\xi):=Q S(\xi)  \alpha(\xi)-\gamma,$$ but this problem can be easily fixed. 
Define 
$$X_t:= T_t-T^{\star}, \ \ \ \ \mathcal{A} f :=  - \eta f  + \mathcal{B}f,$$
so that
$$\mathcal{Q}f=\mathcal{A}f+b.$$
Recalling that $T^{\star}$ is an equilibrium distribution for the temperature, we may write 
   \begin{align*} 
    \d X_t& = \d (T_t-T^{\star})  = \mathcal{Q} T_t\d t -\mathcal{Q}T^{\star}\d t+  \sigma \d W_t    +   \mathbf{1} \d\nu_t \\
    &= \mathcal{A} T_t \d t -\mathcal{A} T^{\star}\d t+ \sigma \d W_t   +   \mathbf{1} \d\nu_t \\
    &= \mathcal{A} X_t \d t+ \sigma \d W_t   +   \mathbf{1} \d\nu_t 
    \end{align*}
and we are   back in our setting. In particular, the operator $\mathcal{A}$ with the aforementioned boundary conditions satisfies Assumptions \ref{ass:A} and \ref{ass:A2} (for the null Neumann boundary conditions one, see, e.g., \cite[Sec.3.1]{Lunardi}; for the periodic ones, see Section 5.2 in \cite{FFRR}).

Assume now that a decision maker (for instance, in this context, the United Nations) has an ideal profile of  temperature $\widehat{T}$ in mind; this could be the pre-industrial equilibrium temperature $T^{\star}$ or a temperature distribution in its vicinity.
The planner thus aims at minimizing the  average square distance $|T- \widehat{T}|^2_H$ to that ideal temperature and measures the cost of investment into capacity by some price $q>0$. Then, the resulting minimization problem in terms of the controlled process $X^{\nu}$ is (cf.\ \eqref{set:S})
$$\inf_{\nu \in \mathcal{S}}\E\bigg[\int_0^{\infty} e^{-\rho t} \Big( \big|X^{\nu}_t + T^{\star} - \widehat{T}\big|^2_H \d t +  q \d\nu_t \Big) \bigg],$$
for some intertemporal discount rate $\rho>0$. This falls into our setting for $G(x)= |x+ T^{\star} - \widehat{T}\big|^2_H$.
Such a specification greatly simplifies the full economic model which would be beyond the scope of the current paper. Compare \cite{Brocketal14} for an attempt in that direction.
\medskip

\textbf{Acknowledgements.}  
Funded by the \emph{Deutsche Forschungsgemeinschaft} (DFG, German Research Foundation) - Project-ID 317210226 - SFB 1283. Salvatore Federico is grateful to Andrzej Swiech for the valuable discussions on viscosity solutions to variational inequalities in Hilbert spaces that led to the formulation of Remarks \ref{rem:uniqueness} and \ref{rem:uniqueness2}.


\appendix

\section{Proof of Proposition \ref{DPPOC}}
\label{sec:appendixA}

\renewcommand{\theequation}{A-\arabic{equation}}

We limit ourselves to provide a sketch of the proof, given that this follows closely from arguments used in \cite[Ch.\ 2 and Sec.\ 3.6.2]{FGS} (for the infinite-dimensional case with regular controls) and in \cite{DeAM} (for the finite-dimensional case with singular controls).

To this end, we let $t\geq0$, we take a cylindrical Wiener process $(W_s)_{s \in [t,\infty)}$ on $(\Omega, \mathcal{F},\mathbb{F},\mathbb{P})$ such that $W_t=0$ a.s., and, for $s\geq t$, we set $\mathcal{F}^{t,0}_s:=\sigma(W_r,\, t\leq r \leq s)$ and denote by $\mathcal{F}^{t}_s$ its completion with $\mathbb{P}$-null sets. We then write $\mathbb{F}^{t,0}:=(\mathcal{F}^{t,0}_s)_{s \in [t,\infty)}$ and $\mathbb{F}^{t}$ for the augmented filtration.

In analogy with \eqref{set:S-new} and \eqref{setI0}, recalling \eqref{setTHETA}, we let 
\begin{eqnarray}
\label{set:S-new-t}
\mathcal{S}_t&:=& \{\nu: \Omega \times [t,\infty) \to [0,\infty):\,\text{$\nu_{\cdot}$ is}\,\,\F^t-\text{adapted and such that}\,\, s \mapsto \nu_s \nonumber \\
&& \hspace{1.5cm} \text{is \ c\`adl\`ag and nondecreasing}\}.
\end{eqnarray}
In the sequel, we set $\nu_{t^-}:=0$ for any $\nu \in \mathcal{S}_t$ and define
\begin{eqnarray}
    \mathcal{I}_t& :=& \Big\{(\vartheta, \nu):\Omega \times [t,\infty) \to \Theta \times [0,\infty): \ \vartheta_\cdot \ \mbox{is} \ \F^t-\mbox{adapted},\,\, \nu_\cdot\in \mathcal{S}_t\,\,\, \text{and} \nonumber \\
    && \E\bigg[\Big|\int_{t^-}^T |{\vartheta}_s|_H\, \d \nu_s\Big|^2\bigg]  +  \E\bigg[\int_{t^-}^T |{\vartheta}_s|^2_H\, \d \nu_s\bigg] < \infty \,\, \forall T>0 \Big\}.
    \label{setI0-t}
\end{eqnarray}
Finally, we set (cf.\ \eqref{eq:admissiblesetfinal-t})
\begin{equation}
    \label{eq:admissiblesetfinal-t}
\hat{\mathcal{I}}_{t}:= \left\{I=(\vartheta,\nu)\in\mathcal{I}_{t}: \  \E\bigg[\int_{t^-}^{\infty} e^{-\rho (s-t)}\d \nu_{s}\bigg]<\infty\right\}.
\end{equation}

Now, for $I:=(\vartheta,\nu)\in\hat{\mathcal{I}_t}$ and $x \in H$, we let $(X^{t,x,I})_{s\in[t,\infty)}$ be the unique mild solution to
\begin{equation}
\label{eq:state-t}
\d X^{I}_{s}=\mathcal{A}X^{I}_{s}\d t+\sigma \d W_{s} + \d I_{s}, \quad s \geq t,  \ \ \ X_{t^{-}}^{I}=x \in H;
\end{equation}
that is,
\begin{equation}
\label{mild-t}
X_{s}^{t,x,I}=e^{(s-t)\A}x+\int_{t^-}^{s}e^{(s-r)\A}\sigma \d W_{r}+ \int_{t^-}^{s}e^{(s-r)\A}\d I_{r}, \quad s\geq t.
\end{equation}
For $u \in [0,s]$, it is then easy to observe from direct manipulations on \eqref{mild-t}, that the following flow property holds:
\begin{equation}
\label{eq-flow property}
    X_{s}^{0,x,I} = e^{(s-u)\A}X_{u}^{0,x,I} + \int_{u^-}^{s}e^{(s-r)\A}\sigma \d W_{r}+ \int_{u^-}^{s}e^{(s-r)\A}\d I_{r} = X_{s}^{u,X_{u}^{t,x,I},I}.
\end{equation}
\vspace{0.25cm}

\emph{Step 1.} We first provide a sketch of the proof of a weaker version of \eqref{eq:DP}, namely for any time $\eta \in [0,\infty)$ it holds
\begin{equation}
\label{eq:DP-weak}
V(x)=\inf_{I\in\hat{\mathcal{I}_0}}\E\left[\int_{0^-}^{\eta} e^{-\rho t} \left(G(X^{x,I}_{t})\d t+\d \nu_{t}\right)+ e^{-\rho \eta} V(X_{\eta}^{x,I})\right], \quad x \in H.
\end{equation}
\vspace{0.10cm}

\textsl{Proof of "$\geq$" in \eqref{eq:DP-weak}.} As in \cite[Lemma 5.1]{DeAM} (see \cite[Lemma 1.99]{FGS} for the infinite-dimensional setting with regular controls), now denote by $\overline{I}$ the $\mathbb{F}^{t,0}$-predictable process such that $\overline{I}\in \hat{\mathcal{I}_t}$ and $\overline{I}=I$ $\mathbb{P}\otimes \d t$-a.e.\ on $\Omega \times [t,\infty)$. Accordingly, we set $\overline{X}_{s}^{t,x,\overline{I}}$ to be given by \eqref{mild-t} with $\overline{I}$ instead of $I$. Then, using that $\overline{X}_{s}^{t,x,\overline{I}}$ and ${X}_{s}^{t,x,{I}}$ are indistinguishable and exploiting the flow property \eqref{eq-flow property} (now for the mild solution $\overline{X}$), we obtain 
\begin{eqnarray}
\label{eq:DDP-partI}
    && \hspace{2cm} \E \bigg[\int_{0^-}^{\infty}e^{-\rho t} \Big({G}(X_t^{0,x,I})\d t +\d \nu_{t} \Big)\bigg]  \\
    && = \E \bigg[\int_{0^-}^{\eta}e^{-\rho t} \Big({G}({X}_t^{0,x,I})\d t +\d {\nu}_{t} \Big)\bigg] + \E \bigg[\int_{\eta^-}^{\infty}e^{-\rho t} \Big({G}(\overline{X}_t^{0,x,\overline{I}})\d t +\d \overline{\nu}_{t} \Big)\bigg] \nonumber  \\
    && =  \E\bigg[\int_{0^-}^{\eta}e^{-\rho t} \Big({G}({X}_t^{0,x,I})\d t +\d {\nu}_{t} \Big)\bigg] + \E \bigg[e^{-\rho \eta}\E\bigg[\int_{\eta^-}^{\infty}e^{-\rho (t -\eta)} \Big({G}(\overline{X}_t^{0,x,\overline{I}})\d t +\d \overline{\nu}_{t} \Big)\Big | \, \mathcal{F}^{0,0}_{\eta}\bigg]\bigg] \nonumber \\
    && =   \E\bigg[\int_{0^-}^{\eta}e^{-\rho t} \Big({G}({X}_t^{0,x,I})\d t +\d {\nu}_{t} \Big)\bigg] + \E \bigg[e^{-\rho \eta}\E\bigg[\int_{\eta^-}^{\infty}e^{-\rho (t -\eta)} \Big({G}(X_{t}^{\eta,X_{\eta}^{0,x,\overline{I}},\overline{I}})\d t +\d \overline{\nu}_{t} \Big)\Big | \, \mathcal{F}^{0,0}_{\eta}\bigg]\bigg]. \nonumber 
\end{eqnarray}
    
    In order to take care of the conditional expectation in the last display equation, we proceed as in Equation (2.29) in \cite{FGS} (see also Step 1 in the proof of \cite[Thm.\ 4.2]{DeAM}). In particular, we define the regular conditional probability on $(\Omega,\mathcal{F})$ given by $\mathbb{P}_{\omega}(A):= \mathbb{P}(A\,|\,\mathcal{F}^{0,0}_{\eta})$ and for any $A \in \mathcal{F}$ $\mathbb{P}$-a.e.\ $\omega$, and we denote by $\mathbb{E}_{\omega}$ its expectation. Moreover, we introduce the control system
    $$I^{\omega}(\cdot):=(\Omega, \mathcal{F}_{\omega}, \mathbb{P}_{\omega}, \mathbb{F}^{\omega, \overline{\tau}_{\varepsilon}}:=(\mathcal{F}^{\omega, \eta}_s)_{s \in [\eta,\infty)}, (W_s - W_{\eta})_{s \in [\eta,\infty)}, \overline{I}|_{[\eta^-, \infty)}(\cdot)) \in \hat{\mathcal{I}}_{\eta},$$
    where $\mathcal{F}^{\omega}$ is the completion of $\mathcal{F}$ with $\mathbb{P}^{\omega}$-null sets and $\mathbb{F}^{\omega, \eta}$ the augmentation with $\mathbb{P}^{\omega}$-null sets of the sigma-algebra generated by the increments $W_{\cdot} - W_{\eta}$, 
    
    Then, appealing to standard properties of regular conditional probabilities (see the remarks at Page 102 in \cite{FGS}), law-invariance property of the state process (see also \cite[Hypothesis 2.29]{FGS}), and recalling the definition of the cost functional \eqref{eq:costfunctbis}, we can write
   \begin{eqnarray*}
     && \E\bigg[\int_{\eta^-}^{\infty}e^{-\rho (t -\eta)} \Big({G}(X_{t}^{\eta,X_{\eta}^{0,x,\overline{I}},\overline{I}})\d t +\d \overline{\nu}_{t} \Big)\Big | \, \mathcal{F}^{0,0}_{\eta}\bigg] \nonumber \\
     && = \E_{\omega}\bigg[\int_{\eta^-}^{\infty}e^{-\rho (t -\eta)} \Big({G}(X_{t}^{\eta,X_{\eta}^{0,x,\overline{I}},\overline{I}})\d t +\d \overline{\nu}_{t} \Big)\bigg] = \mathcal{J}(\overline{X}_{\eta}^{0,x,{I}}(\omega); I^{\omega}) \geq V(\overline{X}_{\eta}^{0,x,{I}}(\omega)).
   \end{eqnarray*}
   Hence, using this last display equation into \eqref{eq:DDP-partI} we obtain
  \begin{eqnarray}
\label{eq:DDP-partI-bis}
     && \E \bigg[\int_{0^-}^{\infty}e^{-\rho t} \Big({G}(X_t^{0,x,I})\d t +\d \nu_{t} \Big)\bigg] \nonumber  \\
    && \geq \E \bigg[\int_{0^-}^{\eta}e^{-\rho t} \Big({G}({X}_t^{0,x,I})\d t +\d {\nu}_{t} \Big) + e^{-\rho \eta} V(\overline{X}_{\eta}^{0,x,{I}})\bigg]
\end{eqnarray}
By taking the infimum over all $I\in\hat{\mathcal{I}_0}$ it thus follows that
\begin{equation}
\label{eq:DPP-largerequal}
   V(x) \geq \inf_{I:=(\vartheta,\nu)\in\hat{\mathcal{I}_0}}\E\left[\int_{0^-}^{\eta} e^{-\rho t} \left(G(X^{x,I}_{t})\d t+\d \nu_{t}\right)+ e^{-\rho \eta} V(X_{\eta}^{x,I})\right].
\end{equation}
\vspace{0.10cm}

\emph{Proof of $\leq$ in \eqref{eq:DP-weak}}. Given the continuity of $V$ (see Proposition \ref{prop:regularityV}) and the continuity of the control functional $\mathcal{J}(\cdot\,;I)$ (due to semiconcavity, see the proof of item (iv) in the proof of Proposition \ref{prop:regularityV}), one can argue as in Part 2 of the proof of \cite[Thm.\ 2.24]{FGS} (see also Step 2 in the proof of \cite[Thm.\ 4.2]{DeAM} for the finite-dimensional setting with singular controls) to conclude.
\vspace{0.25cm}

\emph{Step 2.} Having the weaker version \eqref{eq:DP-weak} and given the continuity of $V$ implied by its semiconcavity, one can then argue as in the proof of \cite[Th.\ 3.70]{FGS} (whose arguments still hold in the case of singular controls) to conclude the validity of \eqref{eq:DP}.


\section{Some technical results}
\label{sec:appendixB}

\renewcommand{\theequation}{B-\arabic{equation}}

\begin{lemma}
\label{lem:convdom}
Let $x \in H$, $I\in \mathcal{I}$, $(\mathcal{A}_{n})_{n\in \mathbb{N}}$ be the Yosida approximants of $\mathcal{A}$, and denote by $X^{n;x,I}$ the unique mild solution to 
$$
\d X^{n;x,I}_{t}=\mathcal{A}_{n} X^{n;x,I}_{t}\d t+\sigma \d W_{t}+\d I_{t}, \ \ \ X_{0^{-}}=x \in H;
$$
that is,
\begin{equation}
\label{eq:mildsolapp}
X_{t}^{n;x,I}=e^{t\A_{n}}x+W_{t}^{\A_{n},\sigma}+ \int_{0^-}^{t}e^{(t-s)\A_{n}}\d I_{s},\quad t\geq 0.
\end{equation}

For all $T>0$ there exists $M>0$ such that 
\begin{enumerate}[(i)]
\item[]
\item $\E\big[\sup_{t\in[0,T]}\big|X^{n;x,I}_t\big|_H^2\big] \leq M\Big(1 + |x|_H^2 + \E\Big[\Big|\int_0^T \big|\hat{\vartheta}_s\big|_H \d |I|_s \Big|^2\Big]\Big)$;
\vspace{0.2cm}

\item $\lim_{n\uparrow \infty}\E\big[\sup_{t\in[0,T]}\big|X^{n;x,I}_t - X^{x,I}_t\big|_H^2] =0.$
\end{enumerate}
\end{lemma}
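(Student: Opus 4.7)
\textbf{Proof proposal for Lemma \ref{lem:convdom}.} The key observation underlying both statements is that, since $\mathcal{A}$ is dissipative (Assumption \ref{ass:A}), each Yosida approximant $\mathcal{A}_n$ is dissipative as well, and therefore $(e^{t\mathcal{A}_n})_{t \geq 0}$ is a family of contraction semigroups, with $|e^{t\mathcal{A}_n}|_{\mathcal{L}(H)} \leq 1$ uniformly in $n \in \mathbb{N}$ and $t \geq 0$. The plan is to exploit this uniform contraction property together with the strong convergence $e^{t\mathcal{A}_n}y \to e^{t\mathcal{A}}y$ as $n \to \infty$, holding uniformly for $t$ in compact subsets of $[0,\infty)$ for every $y \in H$ (see, e.g., \cite[Ch.\ II, Thm.\ 3.5]{EN}).

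For \emph{(i)}, I would start from the mild representation \eqref{eq:mildsolapp}, apply the triangle inequality together with the elementary bound $(a+b+c)^p \leq 3^{p-1}(a^p+b^p+c^p)$, and use the contraction property to get, for any $t \in [0,T]$,
\[
|X^{n;x,I}_t|_H^p \leq 3^{p-1}\left(|x|_H^p + |W^{\mathcal{A}_n,\sigma}_t|_H^p + \left(\int_{0^-}^T |\hat{\vartheta}_s|_H\,\d|I|_s\right)^p\right),
\]
where, in the third addend, I bound $\left|\int_{0^-}^t e^{(t-s)\mathcal{A}_n}\hat{\vartheta}_s\,\d|I|_s\right|_H \leq \int_{0^-}^t |\hat{\vartheta}_s|_H\,\d|I|_s$ via the contraction property of $(e^{t\A_n})_{t\geq0}$. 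Taking the supremum in $t\in [0,T]$ and the expectation, the first and third terms are immediately as required, while the stochastic convolution is controlled by a uniform-in-$n$ version of the Hausenblas-Seidler estimate \eqref{estimatesup-W} (the constant there only depends on $\delta$ and $\sigma$, both of which are preserved in the Yosida scheme due to contractivity). This gives (i).

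For \emph{(ii)}, I would decompose
\[
X^{n;x,I}_t - X^{x,I}_t = \bigl(e^{t\mathcal{A}_n}-e^{t\mathcal{A}}\bigr)x + \bigl(W^{\mathcal{A}_n,\sigma}_t - W^{\mathcal{A},\sigma}_t\bigr) + \int_{0^-}^t \bigl(e^{(t-s)\mathcal{A}_n}-e^{(t-s)\mathcal{A}}\bigr)\hat{\vartheta}_s\,\d|I|_s
\]
and treat each summand separately. The first term converges to $0$ uniformly in $t\in[0,T]$ by the strong convergence of the Yosida semigroups. The stochastic convolution difference is handled exactly as in the regular control setting of \cite[Prop.\ 1.132]{FGS}: pointwise convergence in $t$ follows from the strong convergence of the semigroups applied to $\sigma$, while the uniform-in-$n$ estimate \eqref{estimatesup-W} provides the dominating function required to exchange limit, supremum, and expectation. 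For the third (singular-control) term, fixing $\omega$ and $t$, the integrand converges to $0$ pointwise in $s$ and is dominated by $2|\hat{\vartheta}_s|_H$, which belongs to $L^1([0,T],\d|I|_s(\omega))$ by \eqref{setI0}; dominated convergence with respect to $|I|$ yields pointwise convergence in $t$. Passing to the supremum in $t$ is the delicate point I address below. Finally, another application of dominated convergence on $\Omega$, with majorant supplied by (i), concludes.

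The main obstacle is the interchange of supremum in $t$ and limit in $n$ in the singular-control term, since the upper limit of integration depends on $t$. The plan is to handle it by showing pointwise equicontinuity in $n$ of the maps $t \mapsto \int_{0^-}^t e^{(t-s)\mathcal{A}_n}\hat{\vartheta}_s\,\d|I|_s$ on $[0,T]$: the strong continuity of each $(e^{t\mathcal{A}_n})$ combined with the uniform bound $|e^{t\mathcal{A}_n}|_{\mathcal{L}(H)}\leq 1$ and the finiteness of $|I|([0,T])$ (on the event $\{|I|([0,T])<\infty\}$, which has full probability by \eqref{setI0}) yields equicontinuity along any subsequence, so that pointwise convergence upgrades to uniform convergence in $t$ via an Arzel\`a--Ascoli-type argument. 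The final $L^p$-convergence on $\Omega$ then follows from the uniform bound in (i) via the dominated convergence theorem.
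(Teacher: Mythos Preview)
Your treatment of (i) and of the first two summands in (ii) is essentially the paper's argument. The gap is in the third (singular-control) term of (ii), precisely where you flag ``the delicate point.''

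The equicontinuity route does not go through as written. The maps $t \mapsto \int_{0^-}^t e^{(t-s)\mathcal{A}_n}\hat{\vartheta}_s\,\d|I|_s$ are in general \emph{not} continuous on $[0,T]$: the control $I$ (hence $|I|$) is only c\`adl\`ag, and each atom of $|I|$ produces a jump of size $\hat{\vartheta}_{t_0}\,\Delta|I|_{t_0}$ in this integral, so Arzel\`a--Ascoli does not apply. If instead you work with the differences $F_n(t):=\int_{0^-}^t (e^{(t-s)\mathcal{A}_n}-e^{(t-s)\mathcal{A}})\hat{\vartheta}_s\,\d|I|_s$, the jumps do cancel (both semigroups equal the identity at $t-s=0$), but equicontinuity \emph{uniformly in $n$} is still not clear: near an atom at $t_0$ one needs $|(e^{h\mathcal{A}_n}-e^{h\mathcal{A}})\hat{\vartheta}_{t_0}|_H$ to be small in $h$ uniformly in $n$, and since $|\mathcal{A}_n|_{\mathcal{L}(H)}\to\infty$ there is no uniform-in-$n$ modulus of continuity for $h\mapsto e^{h\mathcal{A}_n}y$ unless $y\in\mathcal{D}(\mathcal{A})$, which you cannot assume for $\hat{\vartheta}_{t_0}$. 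So the ``strong continuity $+$ uniform bound $\Rightarrow$ equicontinuity'' step is where the argument breaks.

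The paper avoids this issue by pushing the supremum over $t$ \emph{inside} the integrand: set
\[
\psi_n(s):=\sup_{t\in[s,T]}\big|(e^{(t-s)\mathcal{A}_n}-e^{(t-s)\mathcal{A}})\hat{\vartheta}_s\big|_H,
\]
so that $\sup_{t\in[0,T]}|F_n(t)|_H \le \int_{0^-}^T \psi_n(s)\,\d|I|_s$. The point is that for each fixed $s$ and each fixed $y\in H$, the Yosida semigroups converge to $e^{\,\cdot\,\mathcal{A}}y$ \emph{uniformly on compact time intervals} (this is \cite[Prop.\ B.34]{FGS}), hence $\psi_n(s)\to 0$ pointwise; together with the domination $\psi_n(s)\le 2|\hat{\vartheta}_s|_H$ and the integrability from \eqref{DP} and \eqref{set:S}, a single application of dominated convergence (first in $s$, then in $\omega$) yields the claim. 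This is the missing idea: trade the problematic uniformity in the \emph{outer} variable $t$ for the already-available uniform-on-compacts convergence of the semigroups in the \emph{inner} time variable $t-s$.
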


\begin{proof}
We start by proving (i). Estimating the Brownian integral in \eqref{eq:mildsolapp} using Theorem 1.112 in \cite{FGS}, and an estimate analogous to \eqref{estimatesup-W} for the integral involving the singular control, we have, for some constant $M>0$ (possibly depending on $T$ and changing from line to line),

\begin{eqnarray}
\label{estimate1sup}
\E\big[\sup_{t\in[0,T]}\big|X^{n;x,I}_t\big|_H^2\big] & \leq & M\Big(|x|_H^2 + \E\Big[\sup_{t\in[0,T]}\Big|\int_0^t e^{(t-s)\mathcal{A}_n}\sigma \d W_s\Big|_H^2\Big]  \nonumber \\
&& + \E\Big[\sup_{t\in[0,T]}\Big|\int_0^t e^{(t-s)\mathcal{A}_n}\hat{\vartheta}_s \d |I|_s\Big|_H^2\Big] \Big) \nonumber \\
&& \leq  M\Big(|x|_H^2 + \E\Big[\int_0^T |\sigma\sigma^*|_{\mathcal{L}_1(H)} \d s\Big] +  \E\Big[\Big|\int_0^T \big|\hat{\vartheta}_s\big|_H \d |I|_s \Big|^2\Big] \Big)\nonumber \\
&& \leq M\Big(1 + |x|_H^2 + \E\Big[\Big|\int_0^T \big|\hat{\vartheta}_s\big|_H \d |I|_s \Big|^2\Big]\Big), \nonumber
\end{eqnarray}
where the last expectation is finite due to the fact that $I \in \mathcal{I}$ (cf.\ \eqref{set:S}).
This proves the first claim.

As for (ii) notice that, for a constant $M>0$ changing from line to line,
\begin{eqnarray*}
\label{estimate2sup}
\E\Big[\sup_{t \in [0,T]} \big|X^{n;x,I}_t - X^{x,I}_t\big|_H^2\Big] & \leq & M \Big(\sup_{t \in [0,T]}\big|\big(e^{t\mathcal{A}_n} - e^{t\mathcal{A}}\big)x\big|^2_H \nonumber \\
&& + \E\Big[\sup_{t \in [0,T]}\Big|\int_0^t \Big(e^{(t-s)\mathcal{A}_n} - e^{(t-s)\mathcal{A}}\Big)\sigma \d W_s\Big|_H^2\Big] \nonumber \\
&& + \E\Big[\sup_{t\in[0,T]}\Big|\int_0^t \Big(e^{(t-s)\mathcal{A}_n} - e^{(t-s)\mathcal{A}}\Big)\hat{\vartheta}_s \d |I|_s\Big|_H^2\Big]\Big). 
\end{eqnarray*}

The first two addends on the right-hand side of the latter display equation converge to zero as $n \uparrow \infty$ as in the proof of \cite[Thm.\ 1.112]{FGS}.
In order to deal with the third addend, define
$$\psi_n(s):=\sup_{t\in[s,T]}\Big|\Big(e^{(t-s)\mathcal{A}_n} - e^{(t-s)\mathcal{A}}\Big)\hat{\vartheta}_s\Big|_H, \quad s \in [t,T],$$
which is such that $\psi_n(s) \to 0$ as $n\uparrow \infty$ $\P$-a.s.\ by \cite[Prop.\ B.34]{FGS}.
Since now $|\psi_n(s)| \leq 2|\hat{\vartheta}_s|_H$, and, for any $T>0$, $\int_0^T |\hat{\vartheta}_s|_H \d|I|_s < \infty$ $\P$-a.s.\ by \eqref{DP} and $\E[|\int_0^T |\hat{\vartheta}_s|_H \d |I|_s|^2] < \infty$ because $I \in \mathcal{I}$, the dominated convergence theorem gives
\begin{eqnarray*}
&& \lim_{n\uparrow \infty}\E\Big[\sup_{t\in[0,T]}\Big|\int_0^t \Big(e^{(t-s)\mathcal{A}_n} - e^{(t-s)\mathcal{A}}\Big)\hat{\vartheta}_s \d |I|_s\Big|_H^2\Big] \leq \lim_{n\uparrow \infty} \E\Big[\sup_{t\in[0,T]}\Big|\int_0^t \psi_n(s) \d |I|_s\Big|_H^2\Big]  =0 \nonumber 
\end{eqnarray*}
\end{proof}

By arguing as in the proof of Lemma \ref{lem:convdom}(i) one can also prove the following.

\begin{lemma}
\label{lem:convdom2}
Let $x \in H$, $I \in \mathcal{I}$ and let $X^{x,I}$ denote the unique mild solution to \eqref{eq:state}; that is,
$$X_{t}^{x,I}=e^{t\A}x+W_{t}^{\A,\sigma}+ \int_{0^-}^{t}e^{(t-s)\A} \d I_{s},\quad t\geq 0.$$
For any $T>0$, there exists  $M>0$ such that
$$\E\big[\sup_{t\in[0,T]}\big|X^{x,I}_t\big|_H^2\big] \leq M\Big(1 + |x|_H^2 + \E\Big[\Big|\int_0^T \big|\hat{\vartheta}_s\big|_H \d |I|_s \Big|^2\Big]\Big).$$
\end{lemma}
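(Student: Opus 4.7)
The plan is to adapt almost verbatim the argument used for Lemma \ref{lem:convdom}(i), working directly with the semigroup $(e^{t\mathcal{A}})_{t\geq 0}$ instead of the Yosida approximants $(e^{t\mathcal{A}_n})_{t\geq 0}$. Since $(e^{t\mathcal{A}})_{t\geq 0}$ is a $C_0$-semigroup of contractions under Assumption \ref{ass:A} (with $|e^{t\mathcal{A}}|_{\mathcal{L}(H)}\leq e^{-\delta t}\leq 1$ for all $t\geq 0$), the same bounds used before are available here, in fact in a cleaner form because no approximation parameter enters.

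More precisely, starting from the mild representation
\[
X^{x,I}_t = e^{t\mathcal{A}}x + W^{\mathcal{A},\sigma}_t + \int_{0^-}^t e^{(t-s)\mathcal{A}} \hat{\vartheta}_s\,\d|I|_s, \qquad t \in [0,T],
\]
I would apply the elementary inequality $(a+b+c)^p \leq 3^{p-1}(a^p+b^p+c^p)$ together with the supremum over $t\in[0,T]$ and then take expectations. The first deterministic term is dominated by $|x|_H^p$ thanks to contractivity of the semigroup. The stochastic convolution term is handled by the estimate \eqref{estimatesup-W}, which provides a constant $\overline{c}_p>0$ with $\E[\sup_{t\geq 0}|W^{\mathcal{A},\sigma}_t|_H^p]\leq \overline{c}_p$ (so in particular on $[0,T]$); alternatively, one could invoke the Burkholder--Davis--Gundy inequality combined with $\sigma\sigma^*\in\mathcal{L}_1(H)$ (Assumption \ref{ass:sigma}) to produce a constant depending on $T$.

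For the control integral, using the contractivity $|e^{(t-s)\mathcal{A}}\hat{\vartheta}_s|_H \leq |\hat{\vartheta}_s|_H$ for $0\leq s\leq t\leq T$ gives
\[
\sup_{t\in[0,T]}\Bigl|\int_{0^-}^t e^{(t-s)\mathcal{A}}\hat{\vartheta}_s\,\d|I|_s\Bigr|_H
\leq \int_0^T |\hat{\vartheta}_s|_H\,\d|I|_s,
\]
and raising this to the $p$-th power and taking expectation gives the required third term, which is finite thanks to $I\in\mathcal{I}$ (cf.\ \eqref{set:S}). Collecting the three pieces yields the claim with a constant $M>0$ depending only on $p$, $T$, $\delta$, $\overline{c}_p$, and $|\sigma\sigma^*|_{\mathcal{L}_1(H)}$.

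There is no real obstacle here; the only point worth a moment's care is the pathwise bound on the convolution integral, where one must remember that the integrand is Bochner-measurable and nonnegative-valued so that the norm can be brought inside the integral against the scalar measure $\d|I|$. This is precisely the content of \eqref{DP} and \eqref{eq:I-conv}, so no new ingredient is needed.
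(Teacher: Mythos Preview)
Your proposal is correct and follows exactly the approach the paper indicates: it simply reruns the argument of Lemma~\ref{lem:convdom}(i) with $e^{t\mathcal{A}}$ in place of $e^{t\mathcal{A}_n}$, using contractivity, \eqref{estimatesup-W} (or BDG with Assumption~\ref{ass:sigma}), and the pathwise bound on the control integral. There is nothing to add.
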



\end{document}